\DeclareFontFamily{U}{mathx}{\hyphenchar\font45}
\DeclareFontShape{U}{mathx}{m}{n}{
      <5> <6> <7> <8> <9> <10>
      <10.95> <12> <14.4> <17.28> <20.74> <24.88>
      mathx10
      }{}
\DeclareSymbolFont{mathx}{U}{mathx}{m}{n}
\DeclareMathAccent{\widecheck}{0}{mathx}{"71}
\DeclareMathAccent{\wideparen}{0}{mathx}{"75}
\numberwithin{equation}{section}
\titleformat*{\section}{\Large \scshape\center}
\titleformat*{\subsection}{\fontsize{14}{14} \sffamily}
\theoremstyle{plain}
\newtheorem{theorem}{Theorem}[section]
\newtheorem*{theorem*}{Theorem}
\newtheorem{lemma}[theorem]{Lemma}
\newtheorem{proposition}[theorem]{Proposition}
\newtheorem{corollary}[theorem]{Corollary}
\theoremstyle{definition}
\newtheorem{definition}[theorem]{Definition}
\newtheorem*{definition*}{Definition}
\newtheorem{example}[theorem]{Example}
\theoremstyle{remark}
\newtheorem*{remark}{Remark}
\newcommand{\Ad}{\mathrm{Ad}}
\newcommand{\ad}{\mathrm{ad}}
\newcommand{\FKO}{\mathcal{F}_{\mathrm{KO}}}
\newcommand{\FW}{\mathcal{F}_{\mathrm{W}}}
\newcommand{\dmr}{\,\mathrm{d}\mu_r}
\newcommand{\dml}{\,\mathrm{d}\mu_l}
\newcommand{\dm}{\,\mathrm{d}\mu}
\newcommand{\tr}{\operatorname{tr}}
\newcommand{\D}{\mathcal{D}}
\newcommand{\SF}{\mathrm{Stab}(F)}
\newcommand{\sF}{\mathrm{stab}(F)}
\title{Weyl Quantization of Exponential Lie Groups for Square Integrable Representations}
\author{Stine Marie Berge and Simon Halvdansson}
\date{\monthyeardate\today}
\newcommand{\R}{\mathbb{R}}
\newcommand{\Addresses}{{
  \bigskip
  \footnotesize

  \textsc{Department of Mathematical Sciences, Norwegian University of Science and Technology, Norway.}\par\nopagebreak
  \textit{E-mail addresses}: 
 \texttt{stine.m.berge@ntnu.no} and \texttt{simon.halvdansson@ntnu.no}
}}
\begin{document}

\maketitle

\begin{abstract}
    We construct a general quantization procedure for square integrable functions on well-behaved connected exponential Lie groups. The Lie groups in question should admit at least one co-adjoint orbit of maximal possible dimension. The construction is based on composing the Fourier-Wigner transform with another Fourier transform we call the Fourier-Kirillov transform. This quantization has many desirable properties including respecting function translations and inducing a well-behaved Wigner distribution. 
    
    Moreover, we investigate the connection to the operator convolutions of quantum harmonic analysis. This is intricately connected to Weyl quantization in the Weyl-Heisenberg setting. We find that convolution relations in quantum harmonic analysis can be written as group convolutions of Weyl quantizations. This implies that the squared modulus of the wavelet transform of the representation can be written as a convolution between two Wigner distributions. Lastly, we look at how we can extend known results based on Weyl quantization to wider classes of groups using our quantization procedure.
\end{abstract}

\section{Introduction}
The Weyl quantization mapping, originally studied by H. Weyl in \cite{Weyl1927, Weyl1950}, is an object of fundamental importance in analysis and mathematical physics. One view of the mapping is as a correspondence rule between functions on phase space, $L^2(\R^{2d})$, and observables in the form of Hilbert-Schmidt operators on the Hilbert space $L^2(\R^d)$. There is a large body of influential work studying this mapping, including but not limited to \cite{Simon1992, Pool1966, deGosson2011}. We refer to \cite{Wong1998} for a comprehensive overview.

Various generalizations of the Weyl quantization to different phase spaces and function spaces have been proposed and studied over the last decades. Some generalizations focus on the dequantization of rank-one operators, that is, dequantization of pure states. This perspective has been applied to a wide variety of contexts, see e.g.\ \cite{Chen2012, Peng2008, ghosh2023unbounded, ali2000, RACHDI2003}. Following the work in \cite{gayral2007fourier,Berge2022affine} we set out to formulate Weyl quantization in the general context in a more systematic way. This formulation is based on the composition of the Fourier-Wigner transform and a \emph{Fourier-Kirillov transform}, with the Fourier-Kirillov transform taking the role of the symplectic Fourier transform. In this way, we are able to formulate our results in a general setting without reference to the specific group structure. It should be mentioned that the proposed Weyl quantization given is similar in  spirit to the one given in \cite{Mantoiu2019}, and enjoys many of the same properties. 

\subsubsection*{Main construction}
The goal of this article is to set up a quantization scheme for functions on a exponential Lie group $G$. The proposed scheme generalizes the Weyl quantization for the affine group, see \cite{gayral2007fourier, gazeau20192d, gazeau2016covariant, Berge2022affine, Berge2022}. Integral to the quantization is the existence of square integrable representations. From Kirillov orbit theory, it is well known how to construct the irreducible unitary representations $\pi: G \to \mathcal{U}(\mathcal{H})$ from the co-adjoint orbits. When the dimension of the co-adjoint orbit coincides with the dimension of the group, then the corresponding representation is square integrable. Hence in our construction we will always assume that the orbits are of the same dimension as the group. Notably, this includes the affine group \cite{Berge2022} and the shearlet group \cite{Kutyniok2012}. 

As mentioned above, our construction of Weyl quantization is based on generalizing the relation $a_S = \mathcal{F}_\sigma(\FW(S))$ where $\mathcal{F}_\sigma$ is the symplectic Fourier transform and $\FW$ is the Fourier-Wigner transform. The Fourier-Wigner transform is a standard object in this setup and it and its inverse are defined as
\begin{align*}
    \FW(A)(x) = \tr(A \D \pi(x)),\qquad \FW^{-1}(f) = \pi(\check{f}) \circ \D.
\end{align*}
where $\check{f}(x) = f(x^{-1})$, $\D$ is the inverse Duflo-Moore operator and $\pi(f)$ is the (left) integrated representation of $f : G \to \mathbb{C}$. We will properly define and give background on these objects in the preliminaries section. Section \ref{sec:FW} is devoted to working out all of the properties of the Fourier-Wigner transform which we will need, including that it can be extended to a unitary isometry from the Hilbert-Schmidt operators $\mathcal{S}^2(\mathcal{H})$ to $L^2_r(G)$, the space of square integrable functions on $G$ with respect to the right Haar measure.

In place of the symplectic Fourier transform, we will set up a \emph{Fourier-Kirillov transform} $\FKO$. This transform is coherently linked to a chosen co-adjoint orbit. Let $K(x)$ denote the co-adjoint map and $\mathcal{O}_F$ the co-adjoint orbit $\{K(x)F:x\in G\}$. Then $\FKO:L^2_r(G)\to L^2_r(G)$ is defined as
\begin{align*}
    \FKO(f)(x)&=\frac{1}{\sqrt{|\mathrm{Pf}_F|\cdot\Delta(x)}}\int_{G}f(y)e^{2\pi i\langle K(x^{-1})F,\, \log(y)\rangle}\frac{1}{\sqrt{\Theta(\log(y))}}\dmr(y)
\end{align*}
where $\mathrm{Pf}_F$, $\Delta$ and $\Theta$ are functions which ensure proper normalization across spaces to be further detailed later. The Fourier-Kirillov transform can be viewed, up to some normalizing factors, as the Fourier transform on $L^2(\mathfrak{g})\to L^2(\mathfrak{g}^*)$ restricted to the orbit corresponding to the element $F\in \mathcal{O}_F$.

With the two Fourier transforms defined, we define the quantization $A$ and dequantization $a = A^{-1}$  by composing them as
\begin{align*}
    A_f = \FW^{-1}(\FKO^{-1}(f)),\qquad a_S = \FKO(\FW(S)).
\end{align*}
Using the properties of the transforms, we are able to deduce that the full quantization map $A : L^2_r(G) \to \mathcal{S}^2(\mathcal{H})$ is a unitary isometry.
This quantization scheme is similar to the Wigner map outlined in \cite{TWAREQUEALIS2003Piau,TWAREQUEALIS2000TWff}, however the inclusion of the co-adjoint map creates an algebraic structure on the phase space, namely the original group structure.
\subsubsection*{Quantization structure}
Having proved that the quantization is a unitary isometry, we move on to showing that quantization respects translations and complex conjugation in an appropriate manner. Specifically, we show the two identities
\begin{align}\label{eq:intro_translate_conjugate}
    \pi(x)^*A_f \pi(x) = A_{R_{x^{-1}}f},\qquad A_f^* = A_{\bar{f}},
\end{align}
where $R_{x^{-1}}f(y) = f(yx^{-1})$. These properties hold true for classical Weyl quantization but are not given for quantization schemes based on the Wigner distribution. Moreover, classical Weyl quantization can be given the additional structure of being an isometric $*$-isomorphism between $H^*$ algebras as was shown by Pool \cite{Pool1966}. This means that taking the adjoint and composing operators in $\mathcal{S}^2$ corresponds to certain actions on functions in a isomorphic way. We show that the same is true for our quantization which lead to notions of \emph{twisted convolutions} and \emph{twisted multiplication}. Specifically, we show that all maps are isometric $*$-isomorphisms between $H^*$ algebras in the commutative diagram
\begin{equation}\nonumber
    \begin{tikzcd}
        \big(\mathcal{S}^2(\mathcal{H}),\, \circ,\,{}^*\big) \arrow[d, "\FW"] \arrow[rd, dotted, "a"]&\\
        \big(\FW(\mathcal{S}^2),\, \natural,\,\sqrt{\Delta(\cdot)}\, \overline{ \check{\phantom{a}} }\big) \arrow[r,"\FKO"]& \big(L^2_r(G),\, \sharp,\, \overline{ \phantom{a} }\big)
    \end{tikzcd}
\end{equation}
where the intermediate space $\FW(\mathcal{S}^2) = \FKO^{-1}(L^2_r(G))$ is a subset of $L^2_r(G)$. This construction is detailed in Section \ref{sec:algebraic_structure}.

\subsubsection*{Wigner distributions}
Wigner distributions are most commonly related to Weyl quantization through the weak relation
\begin{align*}
    \big\langle f, W(\phi, \psi) \big\rangle_{L^2_r} = \big\langle A_f \psi, \phi \big\rangle_{\mathcal{H}}
\end{align*}
for all $f \in L^2_r(G)$ and $\psi, \phi \in \mathcal{H}$. This relation is equivalent to the Wigner distribution $W(\psi, \phi)$ being the dequantization of the rank-one operator $(\psi \otimes \phi) : \xi \mapsto \langle \xi, \phi \rangle \psi$. For this reason, we set $W(\psi, \phi) = a_{\psi \otimes \phi}$ and get the properties that the mapping $(\psi, \phi) \mapsto W(\psi, \phi)$ is sesquilinear, that $\overline{W(\psi, \phi)} = W(\phi, \psi)$ and $R_x W(\psi, \phi) = W(\pi(x)\psi, \pi(x)\phi)$ from general quantization properties such as \eqref{eq:intro_translate_conjugate} for free.
In the case that the group is unimodular, we show that
\begin{align*}
    \int_G W(\psi, \phi)(x)\,\mathrm{d}\mu(x) = \langle \psi, \phi \rangle.
\end{align*}

\subsubsection*{Quantum harmonic analysis}
The framework of quantum harmonic analysis is concerned with convolutions, translations and Fourier transforms of operators and their interactions. One of several ways to define these operations is through their interaction with Weyl quantization. In recent years, quantum harmonic analysis has been developed for the affine group \cite{Berge2022} in conjunction with a Weyl quantization procedure \cite{Berge2022affine} as well as for general locally compact groups \cite{Halvdansson2023} without any connection to quantization. We will show that our quantization procedure is compatible with the quantum harmonic analysis operations in \cite{Berge2022} and \cite{Halvdansson2023}, thus imbuing them with additional structure. Specifically, we will show that the operator convolutions from quantum harmonic analysis can be realized as group convolutions of Weyl symbols as
\begin{align*}
    f \star S &= A_{f * a_S},\\
    T \star S &= a_T * \check{a_S}.
\end{align*}
As a consequence, we can generalize relations such as the convolution of two Wigner distributions being a spectrogram,
\begin{align}\label{eq:wigner_wigner_spectrogram}
    W(\psi) * \widecheck{W(\phi)} = |\mathcal{W}_\phi \psi|^2
\end{align}
where $\mathcal{W}_\phi \psi$ is the wavelet transform of $\psi$ with respect to $\phi$.

\subsubsection*{Applications}
Since our construction is a generalization of that for the Weyl-Heisenberg and affine groups, for applications our hope is to generalize some of the properties of those Weyl quantization. The first application in Section \ref{sec:applications} has not been explored for the affine group but uses the relation \eqref{eq:wigner_wigner_spectrogram} to develop a criterion for \emph{phase retrieval}, meaning inversion of the map $\mathcal{W}_\phi \psi \mapsto |\mathcal{W}_\phi \psi|$.

Having developed a Wigner distribution, we treat the \emph{Wigner approximation problem} which asks how close a given $L^2_r(G)$ function is to being a Wigner distribution using the same approach as has been used for the Weyl-Heisenberg and affine groups earlier. Similarly, we are able to extend a proof technique which shows that for both Wavelet spaces $\mathcal{W}_\phi(\mathcal{H})$ and Wigner spaces $W(\mathcal{H}, \phi)$, spaces induced by different $\phi_1, \phi_2$ have trivial intersection as long as $\phi_1$ and $\phi_2$ are linearly independent. In both of these cases, we have show how general Weyl quantization allows us to apply the proof techniques from standard Weyl quantization to more general contexts.

These examples of applications are not meant to be exhaustive but rather to illustrate the value of the tools developed in the article.

\section{Preliminaries}
\subsection{Weyl quantization on the Weyl-Heisenberg Group}\label{sec:prelim_weyl}
To set the stage, we will review how Weyl quantization is set up in the well-known case of the Weyl-Heisenberg group. Specifically, we will outline how we view Weyl quantization as being induced by a pair of Fourier transforms instead of the Wigner distribution. This viewpoint is more amenable to generalizations. For a more physics-oriented perspective, see e.g. \cite[Chap.~13]{hall2013quantum}.


\subsubsection{Via Wigner distribution}
The original construction by Weyl is most easily formulated using the \emph{cross-Wigner distribution} \cite{wigner1932quantum}, also called the \emph{Wigner transform}, given by
\begin{align*}
    W(\psi, \phi)(x, \omega) = \int_{\R^d} \psi(x+t/2)\overline{\phi(x-t/2)}e^{-2\pi i \omega \cdot t}\,\mathrm{d}t, \qquad \psi, \phi \in L^{2}(\mathbb{R}^{d}).
\end{align*}
We write $W(\phi)=W(\phi,\phi)$ for convenience. Using the Wigner transform, we can define the Weyl quantization $A_f$ of a function $f \in L^2(\R^{2d})$ weakly via the relation
\begin{align}\label{eq:wigner_weak_def}
    \langle A_f \psi, \phi \rangle_{L^2(\R^d)} = \big\langle f, W(\phi, \psi) \big\rangle_{L^2(\R^{2d})}.
\end{align}
We say that the map $ f \mapsto A_f$ is the \emph{quantization map} or \emph{Weyl transform} and write $S \mapsto a_S$ for the inverse, called the \emph{dequantization map}. The dequantization map is a bijective isometry from the space of Hilbert-Schmidt operators $\mathcal{S}^2$ to $L^2(\R^{2d})$, see \cite{Pool1966}. Hence $A_{a_S} = S$ and $f = a_{A_f}$. Through an elementary computation, one can show that the following \emph{Moyal identity} holds for the Wigner transform.
\begin{theorem}\label{theorem:moyal_wigner}
    For all $\psi_1, \psi_2, \phi_1, \phi_2 \in L^2(\R^d)$, it holds that
    \begin{align}\label{eq:moyal_wigner}
        \big\langle W(\psi_1, \phi_1), W(\psi_2, \phi_2) \big\rangle_{L^2(\R^{2d})} = \langle \psi_1, \psi_2\rangle_{L^2(\R^{d})} \overline{\langle \phi_1, \phi_2\rangle}_{L^2(\R^{d})}.
    \end{align}
\end{theorem}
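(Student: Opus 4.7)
The plan is to expand the left-hand side directly from the definition of the Wigner transform, collapse the frequency integral via Plancherel in the $\omega$-variable, and then factor the resulting double integral by a linear change of variables.

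First, I would write
\[
    \big\langle W(\psi_1,\phi_1), W(\psi_2,\phi_2)\big\rangle_{L^2(\R^{2d})}
    = \int_{\R^{2d}} W(\psi_1,\phi_1)(x,\omega)\,\overline{W(\psi_2,\phi_2)(x,\omega)}\,\mathrm{d}x\,\mathrm{d}\omega
\]
and substitute the defining formula, obtaining an iterated integral in variables $(x,\omega,t,s)$. For each fixed $x$, the integrand in $(\omega,t,s)$ is the pairing of the Fourier transform (in $t$) of $F_1^x(t) := \psi_1(x+t/2)\overline{\phi_1(x-t/2)}$ against the conjugate of the Fourier transform of $F_2^x(s) := \psi_2(x+s/2)\overline{\phi_2(x-s/2)}$. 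By Plancherel applied in the $\omega$-variable, the inner $(\omega,s)$-integral evaluates to $\langle F_1^x, F_2^x\rangle_{L^2(\R^d_t)}$, effectively enforcing $s=t$.

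After this collapse, the expression reduces to
\[
    \int_{\R^d}\!\int_{\R^d} \psi_1(x+t/2)\,\overline{\psi_2(x+t/2)}\;\overline{\phi_1(x-t/2)}\,\phi_2(x-t/2)\,\mathrm{d}t\,\mathrm{d}x.
\]
I would then apply the change of variables $u = x + t/2$, $v = x - t/2$, whose Jacobian is $1$, which separates the integrand completely and yields
\[
    \Bigl(\int_{\R^d} \psi_1(u)\overline{\psi_2(u)}\,\mathrm{d}u\Bigr)\Bigl(\int_{\R^d} \phi_2(v)\overline{\phi_1(v)}\,\mathrm{d}v\Bigr) = \langle \psi_1,\psi_2\rangle\,\overline{\langle \phi_1,\phi_2\rangle},
\]
which is exactly \eqref{eq:moyal_wigner}.

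The main obstacle is the rigorous justification of the Plancherel step and the use of Fubini when $\psi_j,\phi_j$ are only assumed to lie in $L^2(\R^d)$, since the defining integral for $W$ is not guaranteed to converge absolutely. The standard remedy, which I would adopt, is to first verify the identity on the dense subspace $\mathcal{S}(\R^d)$ of Schwartz functions — where $W(\psi_j,\phi_j)\in\mathcal{S}(\R^{2d})$ and every manipulation above is absolutely convergent — and then extend to all of $L^2(\R^d)$ by continuity, using that both sides are separately continuous sesquilinear in $(\psi_1,\phi_1)$ and $(\psi_2,\phi_2)$ together with the standard bound $\|W(\psi,\phi)\|_{L^2(\R^{2d})} \le \|\psi\|_{L^2}\|\phi\|_{L^2}$, which itself follows from the Schwartz case by the same density argument.
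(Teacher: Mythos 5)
Your proof is correct and is precisely the ``elementary computation'' that the paper invokes without writing out: expand the definition, apply Plancherel in the $\omega$-variable, and separate the variables via the unimodular change of coordinates $u = x+t/2$, $v = x-t/2$. Your closing remark on first verifying the identity on Schwartz functions and extending by density and sesquilinearity correctly handles the only delicate point, so nothing is missing.
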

In view of this result, the quantization rule \eqref{eq:wigner_weak_def} implies that the dequantization of the rank-one operator $\psi \otimes \phi$ is the Wigner transform $W(\psi, \phi)$. 
Consequently, one can view quantization as being induced by the Wigner transform or vice versa. 

Define the (projective) \emph{Schr\"odinger representation} by
\[
    \pi(z) f(t) = \pi(x,\omega) f(t) = M_\omega T_x f(t) = e^{2\pi i \omega t} f(t-x)
\]
where we have made the identification $z = (x, \omega) \in \R^{2d}$. We will discuss the time and frequency-shifts $T_x, M_\Omega$ further in Section \ref{sec:prelim_TF}. 
The key relation between the Schr\"odinger representation and the Weyl quantization is included in the following proposition:
\begin{proposition}\label{prop:classic_quantization_props}
    Let $f \in L^2(\R^{2d})$. Then $A_{\bar{f}} = A_f^*$ and 
    \begin{equation*}
        A_{f(\cdot - z)} = \pi(z)A_f \pi(z)^*.
    \end{equation*}
\end{proposition}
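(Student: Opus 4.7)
The plan is to work directly from the weak definition \eqref{eq:wigner_weak_def} of the Weyl quantization and reduce both identities to symmetry properties of the cross-Wigner distribution. The key facts I would first establish (or recall as standard identities) about $W$ are the \emph{conjugation symmetry} $\overline{W(\phi,\psi)} = W(\psi,\phi)$, which follows directly from the defining integral, and the \emph{translation covariance} $W(\pi(z)\psi,\pi(z)\phi)(w) = W(\psi,\phi)(w-z)$, which is a standard computation using that $\pi(z)$ combines a time-shift by $x$ with a modulation by $\omega$. With these two identities in hand, both claims reduce to moving a symmetry across the $L^2$ pairing in the weak definition.

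For the adjoint identity, I would compute
\begin{align*}
    \langle A_{\bar f}\psi,\phi\rangle_{L^2(\R^d)} = \langle \bar f, W(\phi,\psi)\rangle_{L^2(\R^{2d})} = \overline{\langle f, W(\psi,\phi)\rangle_{L^2(\R^{2d})}} = \overline{\langle A_f \phi,\psi\rangle_{L^2(\R^d)}} = \langle A_f^*\psi,\phi\rangle_{L^2(\R^d)},
\end{align*}
where the second equality uses the conjugation symmetry of $W$. Since this holds for all $\psi,\phi \in L^2(\R^d)$, we conclude $A_{\bar f} = A_f^*$.

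For the covariance identity, I would first translate the symbol by substitution in the phase-space integral,
\begin{align*}
    \langle A_{f(\cdot-z)}\psi,\phi\rangle_{L^2(\R^d)} = \langle f(\cdot-z), W(\phi,\psi)\rangle_{L^2(\R^{2d})} = \langle f, W(\phi,\psi)(\cdot+z)\rangle_{L^2(\R^{2d})},
\end{align*}
and then invoke the covariance of $W$ in the form $W(\phi,\psi)(\cdot+z) = W(\pi(z)^*\phi,\pi(z)^*\psi)$ to obtain $\langle f, W(\pi(z)^*\phi,\pi(z)^*\psi)\rangle = \langle A_f \pi(z)^*\psi,\pi(z)^*\phi\rangle = \langle \pi(z)A_f\pi(z)^*\psi,\phi\rangle$.

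The main subtlety I anticipate is bookkeeping for the projective Schrödinger representation: one must check that the phase factor appearing in $\pi(-z) = e^{i\theta}\pi(z)^*$ cancels when passing between $W(\pi(-z)\phi,\pi(-z)\psi)$ and $W(\pi(z)^*\phi,\pi(z)^*\psi)$, which it does because $W$ is sesquilinear so the phase and its conjugate produce $|e^{i\theta}|^2 = 1$. Apart from this, the argument is a matter of carefully tracking which slot of $W$ is conjugated in the pairing; both conclusions are purely formal consequences of the two symmetry properties of the cross-Wigner distribution.
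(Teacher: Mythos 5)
Your proof is correct. Note that the paper states Proposition~\ref{prop:classic_quantization_props} without proof, as a standard fact recalled in the preliminaries, so there is no in-paper argument to match it against directly; but the general analogues that the paper does prove later, Propositions~\ref{prop:quantization_translation} and~\ref{prop:quantization_adjoint}, take a genuinely different route. You argue entirely from the weak definition \eqref{eq:wigner_weak_def}, reducing both identities to the conjugation symmetry $\overline{W(\phi,\psi)}=W(\psi,\phi)$ and the covariance $W(\pi(z)\psi,\pi(z)\phi)=W(\psi,\phi)(\cdot-z)$ of the cross-Wigner distribution; your handling of the projective phase is also the right one, since the phase and its conjugate cancel in the sesquilinear $W$. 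The paper instead factors the quantization as $A_f=\FW^{-1}(\FKO^{-1}(f))$ and pushes translation and conjugation through each Fourier transform separately (Lemma~\ref{lemma:wigner_rep_interaction}, Proposition~\ref{prop:translate_of_FKO}, Lemma~\ref{lemma:kirillov_complex_conjugate_final}). The trade-off is that your argument is shorter and more elementary but presupposes the symmetry properties of $W$ as independently known facts, which is available in the Weyl--Heisenberg case where $W$ has an explicit integral formula; in the paper's general setting the logical order is reversed, as the Wigner distribution is \emph{defined} as $a_{\psi\otimes\phi}$ and its symmetries (Proposition~\ref{prop:wigner_props}) are \emph{consequences} of the quantization identities rather than inputs to them. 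So your approach is the natural one for the classical statement as posed, while the paper's approach is the one that generalizes.
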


\subsubsection{Via Fourier transforms}\label{sec:prelim_fw_fko}
When generalizing Weyl quantization beyond the standard $\R^{2d}$ setting, a common practice is to generalize the definition of the Wigner distribution and use \eqref{eq:wigner_weak_def} to induce a quantization map. In this paper, we argue that while the relation \eqref{eq:wigner_weak_def} indeed is fundamental to quantization, directly defining a different version of the Wigner distribution is not. In fact, there is another way to define Weyl quantization which itself induces a Wigner distribution which is what we will detail in this section. 

Recall that phase space $\R^{2d}$ is a symplectic space when equipped with the symplectic form 
\[
\sigma(x_1, \omega_1, x_2, \omega_2) =\omega_1 \cdot x_2 - \omega_2 \cdot x_1, \qquad (x_1, \omega_1),\, (x_2, \omega_2)\in \mathbb{R}^{2d}.
\]
Here the appropriate Fourier transform is the \emph{symplectic Fourier transform} $\mathcal{F}_\sigma$, defined for $f \in L^{1}(\mathbb{R}^{2d})$ as
\begin{align*}
    \mathcal{F}_\sigma(f)(z) = \int_{\R^{2d}} f(z') e^{-2\pi i \sigma(z,z')}\,\mathrm{d}z'.
\end{align*}
As is the case with the standard Fourier transform, the symplectic Fourier transform can be extended to square integrable functions as well. On the space of operators, we will instead use the Fourier-Wigner (sometimes called Fourier-Weyl) transform. The \emph{Fourier-Wigner transform} $\FW$ is a map from the trace-class operators on $L^{2}(\mathbb{R}^{d})$, denoted by $\mathcal{S}^1$, to a subset of $L^2(\R^{2d})$, given by
\begin{align*}
    \FW(S)(z) = e^{-\pi i x \cdot \omega} \tr(\pi(-z)S).
\end{align*}
It follows from \cite{folland1989harmonic, Wong1998, Luef2018} that the Weyl quantization can be written as
\begin{align*}
    a_S = \mathcal{F}_\sigma(\FW(S)).
\end{align*}

\subsection{Representation theory on locally compact groups}\label{sec:prelim_rep_theory}
In this section, let $G$ denote a locally compact group of \textit{type I}, see e.g.~\cite[p.~229]{folland2016course}. In the context of Lie groups, ``type I" is often referred to as \emph{tame groups}, in contrast with \emph{wild groups}. Tame groups are a broad class of Lie groups that include all exponential Lie groups, see \cite{take57,GOTOMorikuni1978Oaco}. Given such a group we will denote the \textit{left Haar measure} on the group $G$ by $\mu_l^G$ and the \textit{right Haar measure} by $\mu_r^G$, see \cite{folland1989harmonic} for the definition. There exists a multiplicative function $\Delta_G:G\to \mathbb{R}^+$ called the \textit{modular function} defined by the relationship
\begin{align*}
    \mu_l^G(x)=\Delta_G(x) \mu_r^G(x), \qquad x \in G.    
\end{align*}
Whenever it is clear from the context, we will suppress the group $G$ in the notation $\mu_l^G$, $\mu_r^G$ and $\Delta_G$. The right versus left Haar measure on an $L^p$-space will be indicated by either an $r$ or $l$ subscript.

Given functions $f_1,\, f_2\in L^1_r(G)$ we define their \textit{(right-)convolution} by
\begin{align*}
    f_1*f_2(x)=\int_{G}f_1(y)f_2(xy^{-1})\dmr(y).  
\end{align*}
Define \textit{right}- and \textit{left-translation} of a function $f:G\to \mathbb{C}$ by $(R_y f)(x)=f(xy)$ and $(L_yf)(x)=f(y^{-1}x)$, respectively.
The \textit{involution} of a function $f\in L^1_l(G)$ is defined by 
\begin{align*}
    \check{f}(x)=f(x^{-1}),\quad \text{ for } x\in G.
\end{align*}
Notice that the involution maps $L^1_l(G)$ to $ L^1_r(G)$. The inverse of the involution will also be denoted by $\check{\cdot}$. It should also be noted that different definitions of function involution exists in the literature, see e.g.\ \cite{folland2016course}.

We will denote by $(\pi, \mathcal{H}_\pi)$ a \textit{irreducible unitary representation} $\pi$ of the group $G$ acting on the separable Hilbert space $\mathcal{H}_\pi$. When it is clear from the context, we will write $\mathcal{H}$ instead of $\mathcal{H}_\pi$.  The representation $(\pi, \mathcal{H})$ induces an \textit{integrated (left) representation} acting on $f\in L^1_l(G)$ by
\begin{align*}
    \pi(f)\phi=\int_G f(x)\pi(x)\phi \dml(x),\quad \text{ for }\phi\in \mathcal{H}. 
\end{align*}
Given an irreducible unitary representation $(\pi,\mathcal{H})$, we denote the \textit{wavelet transform} by 
\begin{align}\label{eq:wavelet_def}
    \mathcal{W}_{\phi}\psi(x)=\langle \psi, \pi(x)^*\phi\rangle_\mathcal{H}.
\end{align}
The convention of using $\pi(x)^*$ instead of $\pi(x)$ for the wavelet transform is nonstandard but more compatible with the right Haar measure which we will prefer. We say that $(\pi,\mathcal{H})$ is \textit{square integrable} if there exists a non-zero $\phi \in \mathcal{H}$ such that $\mathcal{W}_{\phi}\phi\in L^2_r(G)$. For square integrable representations, we have the following classical orthogonality relation from \cite{duflo1976}.
\begin{theorem}[Duflo-Moore Theorem]\label{theorem:duflo-moore}
    Let $(\pi , \mathcal{H})$ be a unitary square integrable representation. There exists a unique positive densely defined operator $\D^{-1}:\mathrm{Dom}(\D^{-1})\subset \mathcal{H}\to \mathcal{H}$ such that
    \begin{equation}\label{eq:Duflo-Moore ortogonality}
        \langle \mathcal{W}_{\phi_1}\psi_1, \mathcal{W}_{\phi_2}\psi_2\rangle_{L^2_r}=\langle \psi_1, \psi_2\rangle_\mathcal{H} \overline{\langle \D^{-1} \phi_1,\D^{-1}\phi_2 \rangle_\mathcal{H}},
    \end{equation}
    for $\phi_1, \phi_2\in \mathrm{Dom}(\D^{-1})$ and $\psi_1, \psi_2\in \mathcal{H}$.
\end{theorem}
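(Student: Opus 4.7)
The plan is to establish the orthogonality relation by first showing the left-hand side of \eqref{eq:Duflo-Moore ortogonality} defines a sesquilinear pairing proportional to $\langle \psi_1,\psi_2\rangle_{\mathcal{H}}$ via a Schur's lemma argument, and then to identify the conjugate of the proportionality constant as an inner product $\langle \D^{-1}\phi_1, \D^{-1}\phi_2\rangle_{\mathcal{H}}$ for a uniquely determined positive operator.

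First, I would fix an admissible vector $\phi_0$ with $\mathcal{W}_{\phi_0}\phi_0 \in L^2_r(G)$, which exists by assumption, and let $\mathcal{A}$ denote the set of all admissible vectors. From $\mathcal{W}_\phi \psi(x) = \langle \psi, \pi(x)^{-1}\phi\rangle_{\mathcal{H}}$ one checks the covariance identity $\mathcal{W}_\phi(\pi(y)\psi) = R_y \mathcal{W}_\phi\psi$; combined with the right invariance of $\mu_r$, this makes $\mathcal{A}$ a $\pi(G)$-invariant linear subspace, and irreducibility of $\pi$ forces $\mathcal{A}$ to be dense in $\mathcal{H}$. For each fixed $\phi \in \mathcal{A}$, a closed graph argument (using pointwise continuity of $\psi \mapsto \mathcal{W}_\phi\psi(x)$ and extraction of a pointwise a.e.\ convergent subsequence from any $L^2_r$-convergent sequence) shows $\psi \mapsto \mathcal{W}_\phi\psi$ is a bounded map from $\mathcal{H}$ to $L^2_r(G)$.

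Next, for $\phi_1,\phi_2 \in \mathcal{A}$ the bounded sesquilinear form $(\psi_1,\psi_2) \mapsto \langle \mathcal{W}_{\phi_1}\psi_1, \mathcal{W}_{\phi_2}\psi_2\rangle_{L^2_r}$ is represented by a bounded operator $T_{\phi_1,\phi_2}$ on $\mathcal{H}$. Applying the covariance identity in both slots together with right-invariance of $\mu_r$ yields $\pi(y) T_{\phi_1,\phi_2} = T_{\phi_1,\phi_2} \pi(y)$ for all $y \in G$, so Schur's lemma gives $T_{\phi_1,\phi_2} = \overline{c(\phi_1,\phi_2)}\,\mathrm{Id}$ for some scalar $c(\phi_1,\phi_2) \in \mathbb{C}$. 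This already delivers the desired factorization, and $(\phi_1,\phi_2) \mapsto c(\phi_1,\phi_2)$ inherits sesquilinearity from the wavelet transform and nonnegativity when $\phi_1=\phi_2$, $\psi_1=\psi_2$.

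Finally, to realize $c(\phi_1,\phi_2)$ as $\langle \D^{-1}\phi_1, \D^{-1}\phi_2\rangle_{\mathcal{H}}$ I would invoke the representation theorem for densely defined, positive, closed quadratic forms: the form $q(\phi) := c(\phi,\phi)$ on $\mathcal{A}$ is shown to be closable, and its closure corresponds to a unique positive self-adjoint operator $\D^{-2}$, whose positive square root is defined to be $\D^{-1}$. Uniqueness of $\D^{-1}$ follows from uniqueness in the form representation combined with the uniqueness of positive square roots. The main obstacle is the domain bookkeeping: one must verify that $q$ is genuinely closable (it need not be bounded, since $\D^{-1}$ is typically unbounded for nonunimodular $G$) and that the form domain, a priori only containing $\mathcal{A}$, is dense — this is where the $\pi(G)$-invariance of $\mathcal{A}$ together with irreducibility is used to rule out pathologies and guarantee that $\D^{-1}$ is densely defined.
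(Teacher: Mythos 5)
The paper does not prove this statement: it is quoted as a classical orthogonality relation from Duflo and Moore, so there is no in-paper argument to compare against. Your sketch follows the standard Grossmann--Morlet--Paul/Duflo--Moore route — Schur's lemma in the analyzed slot to factor out $\langle \psi_1,\psi_2\rangle_{\mathcal{H}}$, then a closable positive form in the window slot to produce $\D^{-1}$ — and this is the right architecture for the theorem.

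Three steps need repair before it is a proof. First, the covariance identity you quote, $\mathcal{W}_\phi(\pi(y)\psi)=R_y\mathcal{W}_\phi\psi$, moves the \emph{analyzed} vector; it shows that $\{\psi:\mathcal{W}_\phi\psi\in L^2_r(G)\}$ is $\pi$-invariant for fixed $\phi$, not that the set $\mathcal{A}$ of admissible \emph{windows} is. For $\mathcal{A}$ you need the other slot, $\mathcal{W}_{\pi(y)\phi}\psi=L_y\mathcal{W}_\phi\psi$, together with $\Vert L_y f\Vert_{L^2_r}=\Delta(y)^{-1/2}\Vert f\Vert_{L^2_r}$, i.e.\ boundedness of left translation on $L^2_r(G)$ rather than right invariance of $\mu_r$; this distinction is exactly where non-unimodularity enters. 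Second, the closed graph theorem cannot be applied to $\mathcal{W}_\phi:\mathcal{H}\to L^2_r(G)$ until you know its domain is all of $\mathcal{H}$, which is precisely what is at stake — invariance plus irreducibility only gives a dense domain. The standard fix reverses your order: show $\mathcal{W}_\phi$ is a \emph{closed} operator on its natural dense invariant domain (your pointwise-subsequence device does this), then apply the unbounded form of Schur's lemma to the positive self-adjoint operator $\mathcal{W}_\phi^*\mathcal{W}_\phi$, which forces it to be a scalar multiple of the identity and hence forces the domain to be all of $\mathcal{H}$; boundedness follows. Third, closability of the form $q$ is flagged but not argued. It is not automatic, but it follows from the same evaluation-continuity trick: fix a unit vector $\psi_0$ and set $K\phi=\overline{\mathcal{W}_\phi\psi_0}$, a linear map on $\mathcal{A}$ with $c(\phi_1,\phi_2)=\langle K\phi_1,K\phi_2\rangle_{L^2_r}$; if $\phi_n\to 0$ in $\mathcal{H}$ then $\mathcal{W}_{\phi_n}\psi_0\to 0$ pointwise, so $K$ is closable, and one may take $\D^{-1}$ to be the modulus of its closure, with uniqueness as you describe. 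With these repairs the argument is complete.
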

We will refer to the operator $\D^{-1}$ in the theorem as the \textit{Duflo-Moore operator}. Functions $\phi \in \operatorname{Dom}(\D^{-1})$ are said to be \emph{admissible}. Below we collect some standard results on how the modular function $\Delta$, representation $\pi$, and Duflo-Moore operator $\D^{-1}$ interact.
\begin{lemma}\label{lemma:haar_measure_properties}
    The following properties hold:
    \begin{enumerate}[label=(\roman*)]
        \item $d\mu_l(x y) = \Delta(y)\dml(x)$,
        \item $d\mu_r(x) = \Delta(x^{-1}) \dml(x)$,
        \item $d\mu_r(x^{-1}) = \Delta(x) \dmr(x)$,
        \item $\D\pi(x) = \sqrt{\Delta(x)} \pi(x)\D$.
    \end{enumerate}
\end{lemma}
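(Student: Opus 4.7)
The plan is to derive parts (i)--(iii) directly from the defining relation $\dml = \Delta\dmr$ combined with invariance of the Haar measures, and to obtain part (iv) from the Duflo--Moore orthogonality relation via a positive square-root argument.

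For (i), I would compute
\begin{align*}
    \dml(xy) = \Delta(xy)\dmr(xy) = \Delta(x)\Delta(y)\dmr(x) = \Delta(y)\dml(x),
\end{align*}
using right-invariance of $\mu_r$ in the middle step and multiplicativity of $\Delta$. Part (ii) is just a rearrangement of the defining relation, combined with $\Delta(x^{-1}) = \Delta(x)^{-1}$. For (iii), the key observation is that the pushforward of $\mu_r$ under inversion is left-invariant: for every $y \in G$, the right-invariant substitution $x \mapsto xy^{-1}$ yields
\begin{align*}
    \int_G f(y^{-1}x^{-1})\dmr(x) = \int_G f((xy)^{-1})\dmr(x) = \int_G f(x^{-1})\dmr(x).
\end{align*}
Uniqueness of the left Haar measure then identifies this pushforward with $\mu_l$, so that $\dmr(x^{-1}) = \dml(x)$, and (ii) supplies the modular factor $\Delta(x)$.

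For (iv), I would exploit the intertwining identity
\[
    \mathcal{W}_{\pi(y)\phi}\psi(x) = \langle \psi, \pi(x)^*\pi(y)\phi\rangle = \mathcal{W}_\phi\psi(y^{-1}x) = (L_y \mathcal{W}_\phi\psi)(x),
\]
together with the change of variables $x \mapsto yz$ in the $L^2_r$ inner product, which (by the computation underlying (iii)) introduces a factor $\Delta(y)^{-1}$, to obtain
\[
    \langle \mathcal{W}_{\pi(y)\phi_1}\psi_1, \mathcal{W}_{\pi(y)\phi_2}\psi_2\rangle_{L^2_r} = \Delta(y)^{-1}\langle \mathcal{W}_{\phi_1}\psi_1, \mathcal{W}_{\phi_2}\psi_2\rangle_{L^2_r}.
\]
Applying Theorem~\ref{theorem:duflo-moore} to both sides and varying $\psi_1,\psi_2$ so as to cancel the scalar $\langle \psi_1,\psi_2\rangle$ yields the operator identity $\Delta(y)\pi(y)^*\D^{-2}\pi(y) = \D^{-2}$. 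Since both sides are positive self-adjoint, uniqueness of the positive square root forces $\sqrt{\Delta(y)}\,\pi(y)^*\D^{-1}\pi(y) = \D^{-1}$, which rearranges to (iv) after taking inverses and conjugating by $\pi(y)$.

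The main obstacle I anticipate is the final step of (iv): one must track the direction of the modular factor carefully through the change of variables, the orthogonality relation, and the passage from $\D^{-1}$ to $\D$, and one must verify that the resulting operator is genuinely positive self-adjoint (which follows from $\pi(y)$ being unitary and $\Delta(y) > 0$) on a dense domain, so that square-root uniqueness actually applies.
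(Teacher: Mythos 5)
The paper itself offers no proof of this lemma --- it is stated as a collection of standard facts (implicitly deferring to \cite{folland1989harmonic} and \cite{duflo1976}) --- so there is nothing to compare against except correctness, and your argument is essentially the textbook derivation and is sound. Parts (i) and (ii) follow exactly as you say from $\dml=\Delta\dmr$, right-invariance of $\mu_r$, and multiplicativity of $\Delta$; for (iv), the identity $\mathcal{W}_{\pi(y)\phi}\psi=L_y\mathcal{W}_\phi\psi$, the factor $\Delta(y)^{-1}$ from $d\mu_r(yz)=\Delta(y)^{-1}\dmr(z)$, and the square-root uniqueness step all check out (note also that admissibility of $\pi(y)\phi$ follows from the same identity, since $L_y$ preserves $L^2_r(G)$, so the domain issue you flag resolves itself). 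The one place where you assert more than you prove is in (iii): uniqueness of the left Haar measure only identifies the inversion pushforward of $\mu_r$ with $c\,\mu_l$ for some constant $c>0$, not with $\mu_l$ itself. The constant is pinned down by applying inversion twice: writing $d\mu_r(x^{-1})=c\,\dml(x)$ and substituting $x\mapsto x^{-1}$ together with $\dml(x^{-1})=\Delta(x^{-1})\,d\mu_r(x^{-1})=c\,\dmr(x)$ gives $\dmr(x)=c^2\,\dmr(x)$, hence $c=1$. With that one line added, the proof is complete.
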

\subsection{Lie groups}\label{sec:prelim_lie}
In this section, we will assume that $G$ is a connected Lie group with Lie algebra $\mathfrak{g}$. We will denote the (Lie group) \textit{exponential map} by $\exp:\mathfrak{g}\to G$. In the case that the exponential map is a diffeomorphism, we say that the Lie group $G$ is \textit{exponential}. The inverse of the exponential map is called the logarithm and is denoted by $\log$.
For a Lie group homeomorphism $\Phi:G\to H$ the following diagram commutes 
\begin{equation}\label{exponential map and homeo}
     \begin{tikzcd}
\mathfrak{g} \arrow{r}{\Phi_*} \arrow[swap]{d}{\exp} & \mathfrak{h} \arrow{d}{\exp} \\%
G \arrow{r}{\Phi}& H
\end{tikzcd},
\end{equation}
where $\mathfrak{g}$ and $\mathfrak{h}$ are the Lie algebras of $G$ and $H$, respectively.

Let $\mathrm{d}X$ denote the Euclidean measure on $\mathfrak{g}$. There exists a function $\Theta:\mathfrak{g}\to \mathbb{R}^+$ such that 
\[\mathrm{d}\mu_r(\exp(X))=\Theta(X)\,\mathrm{d}X\]
is the right Haar measure in exponential coordinates. Additionally, 
\[\mathrm{d} \mu_l(\exp(X))=\Theta(-X)\,\mathrm{d}X.\]
Hence 
\begin{equation}\label{eq:modular quotion}
\frac{\Theta(-X)}{\Theta(X)}=\Delta_G(\exp(X)).
\end{equation}
We can compute $\Theta$ by using the formula 
\begin{equation}\label{eq:volume exponential}
\Theta(X)=\left|\det\left(\frac{e^{\mathrm{ad}(X)}-1}{\mathrm{ad}(X)}\right)\right|.
\end{equation}
In the case that $G$ is a nilpotent group, we have that $\Theta(X)=1$ for all $X \in \mathfrak{g}$.


A Lie group can act on itself via conjugation $\Psi_{\underline{}}:G\to \mathrm{Aut}(G)$ given by $\Psi_x(y)=xyx^{-1}$. Fixing $x\in G$, the \emph{adjoint representation} is given by 
\[\Ad_x=(\Psi_x)_*:\mathfrak{g}\to \mathfrak{g}.\]
The modular function can be written by using the adjoint map as
\begin{equation}\label{eq:modular_function_adjoint}
    \det(\Ad_x)=\Delta(x).
\end{equation}
By using \eqref{exponential map and homeo} we get that 
$\exp(\Ad_x X)=\Psi_x(\exp(X))$. Fixing $Y\in \mathfrak{g}$ and taking the derivative in the other variable, we get that $\ad_Y:\mathfrak{g}\to \mathfrak{g}$ can be computed as $\ad_Y X=[Y, X]$. 

\subsubsection{Group examples}\label{sec:group_examples_prelim}
Before going further, it is instructive to look at a couple of examples of exponential Lie groups where the representations are known.
\begin{example}[Heisenberg Group]\label{ex:heisenberg}
The Heisenberg group is the underlying group for the standard Wigner transform outlined in Section \ref{sec:prelim_TF}. It should however be noted that the representation given by co-adjoint orbit theory is not square integrable.

Let $\mathbb{H}$ denote the Heisenberg group, which when written in matrix form is the group 
\[\mathbb{H}=\left\{(x,y,z)=\begin{pmatrix}
1&x&z\\
0&1&y\\
0&0&1\\
\end{pmatrix}: x,y,z\in \mathbb{R}\right\}.
\]
The corresponding Lie algebra $\mathfrak{h}$ is the span of 
\[X=\begin{pmatrix}
0&1&0\\
0&0&0\\
0&0&0\\
\end{pmatrix},\quad
Y=\begin{pmatrix}
0&0&0\\
0&0&1\\
0&0&0\\
\end{pmatrix},\quad
Z=\begin{pmatrix}
0&0&1\\
0&0&0\\
0&0&0\\
\end{pmatrix}.
\]
It is endowed with the bracket relations
\[[X,Z]=[Y,Z]=0 \text{ and } [X,Y]=Z.\]
The exponential map is given by 
\[\exp(xX+yY+zZ)=(x,y,z+xy/2).\]
It is well-known that both the right and the left Haar-measure is given by 
\[\mathrm{d}\mu^\mathbb{H}(x,y,z)=\mathrm{d} x\,\mathrm{d} y\,\mathrm{d} z.\]
Hence the modular function $\Delta_\mathbb{H}(x,y,z)= 1$ and $\Theta(xX+yY+zZ)=1$.
Except the characters, the irreducible representations on $\mathbb{H}$ is, up to equivalence, given by
 \[\pi_\hbar (x,y,z)f(w)=\exp(2\pi i \hbar( z+yw))f(w+x),\]
where $\hbar\in \mathbb{R}\setminus \{0\}$ and $f\in L^2(\mathbb{R})$.
\end{example}
\begin{example}[Affine Group]\label{ex:affine_group}
The \textit{(reduced) affine group} $(\mathrm{Aff}, \cdot_{\mathrm{Aff}})$ is the Lie group whose underlying set is the upper half plane $\mathrm{Aff} \coloneqq \mathbb{R}^{+} \times \mathbb{R} \coloneqq  (0, \infty)\times\mathbb{R} $, while the group operation is given by \[(a,x) \cdot_{\mathrm{Aff}} (b,y) \coloneqq (ab,ay + x), \qquad (a,x), (b,y) \in \mathrm{Aff}.\]
We can represent the affine group $\mathrm{Aff}$ and its Lie algebra $\mathfrak{aff}$ in matrix form 
\[\mathrm{Aff} = \left\{\begin{pmatrix} a & x \\ 0 & 1\end{pmatrix} : a> 0, \, x \in \mathbb{R}\right\}, \quad \mathfrak{aff} = \left\{uU+vV : u,v \in \mathbb{R}\right\},\]
where \[U=\begin{pmatrix}
        1&0\\0&0 \end{pmatrix} \text{ and }V=\begin{pmatrix}
        0&1\\0&0 \end{pmatrix}.\]
The Lie algebra structure of $\mathfrak{aff}$ is completely determined by 
\begin{equation*}
    \left[U, V\right] = V.
\end{equation*}

The right and left Haar measure are given by \[\dmr(a,x)=\frac{\mathrm{d}a\,\mathrm{d}x}{a},\qquad \dml(a,x)=\frac{\mathrm{d}a\,\mathrm{d}x}{a^2},\]
hence the modular function is $\Delta(a,x)=\frac{1}{a}$.
Computing the exponential map gives \[\exp(uU+vV)=(e^u,v\lambda(u)),\]
where $\lambda(u)=\frac{e^u-1}{u}$.
In the case of the exponential map $\Theta$ is given by \[\Theta(uU+vV)=e^{-u}\cdot\lambda(u).\]
Except the characters, there are only two non-equivalent irreducible representations of the affine group. These are given by
\begin{equation*}
    \pi_+(a,x)\psi(r) \coloneqq e^{-2\pi i x r}\psi(ar), \qquad \psi \in L^{2}(\mathbb{R}^{+}, r^{-1}\, dr)
\end{equation*}
and
\begin{equation*}
    \pi_-(a,x)\psi(r) \coloneqq e^{2\pi i x r}\psi(ar), \qquad \psi \in L^{2}(\mathbb{R}^{+}, r^{-1}\, dr).
\end{equation*}
\end{example}

\begin{example}[Shearlet Group]\label{ex:shearlet}
A matrix representation of the Shearlet group $\mathbb{S}$ is given by 
    \begin{align*}
       (a,s,x_1,x_2)=(a,s,x)= \begin{pmatrix}
            a & \sqrt{a}s & x_1\\
            0 & \sqrt{a} & x_2\\
            0&0&1
	  \end{pmatrix},\quad a>0,\,s\in \mathbb{R}, x\in \mathbb{R}^2.
    \end{align*}
    The multiplication of two elements is defined as
    \begin{align*}
        (a,s,x_1,x_2)(b,t,y_1,y_2) &= (ab, s+\sqrt{a}t, x+S_sA_ay)\\
        &=(ab, s+\sqrt{a}t, x_1+ay_1+\sqrt{a}sy_2,x_2+\sqrt{a}y_2)
    \end{align*}
    where
    \begin{align*}
      A_a=\begin{pmatrix}
	a&0\\
	0&\sqrt{a}
      \end{pmatrix},\qquad 
      S_s=\begin{pmatrix}
	1&s\\
	0&1
      \end{pmatrix}.
    \end{align*}
    The left- and right-Haar measure are given by 
    \[
    \mathrm{d}\mu_l(a,s,x_1,x_2)=\frac{\mathrm{d}a\,\mathrm{d}s\,\mathrm{d}x_1\,\mathrm{d}x_2}{a^3},\qquad \mathrm{d}\mu_r(a,s,x_1,x_2)=\frac{\mathrm{d}a\,\mathrm{d}s\,\mathrm{d}x_1\,\mathrm{d}x_2}{a}.
    \]
    Hence the modular function is 
    \[\Delta_{\mathbb{S}}(a,s,x)=\frac{1}{a^2}.\]

The Lie algebra in matrix form is given by
\begin{equation*}
    \mathfrak{s}=\left\{  \begin{pmatrix}
        \alpha & \sigma & \xi_1\\
        0&\alpha/2&\xi_2\\
        0&0&0
    \end{pmatrix}\, :\, \alpha,\sigma,\xi_1,\xi_2\in \mathbb{R} \right\}.
\end{equation*}
Let $A,\, B,\, C$ and $D$ be a basis for the Lie algebra consisting of the matrices
        \[A=\begin{pmatrix}
        1&0&0\\
        0&1/2&0\\
        0&0&0
        \end{pmatrix},\,B=\begin{pmatrix}
        0&1&0\\
        0&0&0\\
        0&0&0
        \end{pmatrix},\,C=\begin{pmatrix}
        0&0&1\\
        0&0&0\\
        0&0&0
        \end{pmatrix},\text{ and }D=\begin{pmatrix}
        0&0&0\\
        0&0&1\\
        0&0&0
        \end{pmatrix}.\]
The Lie bracket is given by
\begin{multline*}
[\alpha A +\sigma B+ \xi_1C+\xi_2D,\tilde{\alpha}A+ \tilde{\sigma}B +\tilde{\xi_1}C+\tilde{\xi_2}D]\\=\frac{\alpha\tilde{\sigma}-\tilde{\alpha}\sigma}{2}B+ (\alpha \tilde{\xi_1}-\tilde{\alpha }\xi_1 +\sigma\tilde{\xi_2}-\tilde{\sigma}\xi_2)C+ \frac{\alpha\tilde{\xi_2}-\tilde{\alpha}\xi_2}{2}D.
\end{multline*}
In this case we have the exponential map
\[\exp(\alpha A+\sigma B+\xi_1C+\xi_2D)=(e^\alpha,\sigma \lambda(\alpha/2),\xi_1\lambda(\alpha)+\sigma \xi_2\lambda(\alpha/2)^2/2,\xi_2\lambda(\alpha/2)),\]
where \[\lambda(\alpha)=\frac{e^\alpha -1}{\alpha}.\]
The function $\Theta$ giving the measures in exponential coordinates is given by \[\Theta(\alpha A+\sigma B+\xi_1C+\xi_2D)=\lambda(\alpha)\lambda(\alpha/2)^2.\]
The irreducible representations of central importance are
\[\pi_+(a,s,x_1,x_2)\phi(b,t)=e^{-2\pi i(bx_1+\sqrt{b}tx_2)}\phi\left(ba, t+s\sqrt{b}\right)\]
and 
\[\pi_-(a,s,x_1,x_2)\phi(b,t)=e^{2\pi i(bx_1+\sqrt{b}tx_2)}\phi\left(ba, t+s\sqrt{b}\right)\]
where $\phi \in L^2(\mathbb{R}^{+}\times \mathbb{R}, \frac{\mathrm{d}b\,\mathrm{d}t}{b})$.

\end{example}

\subsection{Representation theory of exponential Lie groups}\label{sec:prelim_rep_theory_exp_lie}

\subsubsection{Induced representations on Lie groups}
In this section, let $G$ be a Lie group and $H$ a closed subgroup of $G$. Then a \textit{right-coset} is the set $Hg=\{hg:h\in H\}$ for all $g\in G$. The collection of all right cosets will be denoted by $R=H\setminus G$. There exists a quotient map $q:G\to R$, sending $g\mapsto Hg$. It is well known that $R$ is a homogeneous manifold equipped with a canonical measure inherited from the Lie group. The measure on $R$ has the property that for every continuous compactly supported function $f$ on $G$ we have
\[\int_G f(g) \dmr^G(g)=\int_{R}\int_H f(hg)\frac{\Delta_G(h)}{\Delta_H(h)}\dmr^H(h) \,\mathrm{d}\mu^{R}(g). \]
Using this measure we can define the space $L^2(R)$ of all square integrable functions. 
Whenever we are given a function $f\in L^1_r(G)$ we have that 
\[\tilde{f}(Hx)=\int_Hf(hx)\frac{\Delta_G(h)}{\Delta_H(h)} \dmr^H(h)\]
is in $L^1(R)$.

Given a section $s:U\subset R\to G$ define the function $h_s:R\times G\to H$ by 
\begin{equation*}
 s(x)g=h_s(x,g)s(xg).
\end{equation*}
The function $h_s$ has the additional property that 
\[h_s(x,g_1g_2)=h_s(x,g_1)h_s(xg_1,g_2).\]
Let $(\pi,\mathcal H)$ be a representation of the group $H$. 
Then we define the induced representation acting on $L^2(R,\mathcal{H})$ by 
\[(\mathrm{ind}_H^G\pi(g)f)(x)= \sqrt{\frac{\Delta_H(h_s(x,g))}{\Delta_G(h_s(x,g))}}\pi(h_s(x,g)) f(x\cdot g).\]

An important special case is when $\pi$ is a character, i.e.,\ a representation acting on the Hilbert space $\mathbb{C}$. Let $F\in \mathfrak{h}^*$. Then when $H$ is an exponential group we have that \[\pi(h)=e^{-2\pi i\langle F,\log(h)\rangle}\]
is a character. Then the induced representation acts on
$L^2(R)$ by 
\[(\mathrm{ind}_H^G\pi(g)f)(x)= \sqrt{\frac{\Delta_H(h_s(x,g))}{\Delta_G(h_s(x,g))}}e^{-2\pi i\langle F,\log_H(h_s(x,g))\rangle} f(x\cdot g).\]

\subsubsection{Semi-direct product}
In the case that $G$ can be written as the semi-direct product of $R$ and $H$, the formulas for induced representation simplify. The definition of semi-direct product relies on the existence of a group homomorphism $\phi: R\to \mathrm{Aut}(H)$ such that the product on $G=R\times H$ is given by 
\[(r,h)\cdot (s,j)=(rs,h\phi(r)j).\]
We will denote the group endowed with the product given by the semi-direct product with respect to $\phi$ by $R\rtimes_\phi H$.
Then $R$ can be identified with the right cosets $H\setminus G$. Notice that 
\[(e_R, \phi(a) y)=(a,e_H)(e_R, y)(a,e_H)^{-1}=\Psi_{(a,e_H)}(e_R, y).\]

Since $H$ is a normal subgroup, we have that $\Delta_H=\Delta_G$.
The semi-direct product have the structure of a quotient group with the right Haar-measure given by
\[\mathrm{d}\mu_r^G = \mathrm{d}\mu^R_r\,\mathrm{d}\mu_r^H.\]
Set $s:R\to G$ to be $a\mapsto (a,e_H)$. Then $h_s(a,(b,y))=(e_R,\phi(a)y)$.
We have that $\Delta_H=\Delta_G$, hence the induced representation simplifies to 
\[(\mathrm{ind}_H^G\pi((b,y))f)(a)= \pi(\phi(a)y) f(a\cdot b).\]
If $(\pi,\mathbb{C})$ is the character
\[\pi(h)=e^{-2\pi iF(\log(h))}\]
defined on $H$, then the induced representation is given by 
\begin{equation}\label{semi:representation}
(\mathrm{ind}_H^G\pi((b,y))f)(a)= e^{-2\pi i F\log_G(\phi(a)y)} f(a\cdot b)=e^{-2\pi i F\log_H(\phi(a)y)} f(a\cdot b)
\end{equation}
acting on $L^2(R,d\mu^R_r)$.
The modular function is given by 
\[\Delta_G(a,x)=\frac{1}{|\mathrm{det}_H(\phi(a)_*)|}\Delta_R(a)\Delta_H(x)=\frac{1}{|\mathrm{det}_H(\Ad_a)|}\Delta_R(a)\Delta_H(x) .\]
In the case that $H$ is unimodular, we have that $\Delta_G(a,x)=\Delta_G(a,e_H)$. In this case, we set 
\[(\D^{-1}\phi)(r)=\sqrt{\Delta_G(r,e_H)}\phi(r)=\frac{\sqrt{\Delta_R(r)}}{\sqrt{|\mathrm{det}_H(\Ad_r)|}}\phi(r).\] 


\subsubsection{Kirillov co-adjoint orbit theory}\label{sec:co-adjoint}
The objective of Kirillov orbit theory is to show how the co-adjoint orbits and the representations are linked.
The Kirillov orbit method gives us an explicit way to construct all irreducible representation on certain Lie groups. As the name implies, the method states that for every co-adjoint orbit there exists a unique, up to equivalence, irreducible representation associated to it. Moreover, the theory states that all irreducible representations can be constructed in such a way. We will now give a short introduction of how the irreducible representations are constructed from the orbits. For a complete overview, see \cite{ArnalDidier2020RoSL,kirillov2004lectures}.

Associated to the adjoint map we can define the \textit{co-adjoint map} $K(g):\mathfrak{g}\to \mathfrak{g}$ by the equation
\[\langle K(g)F, X\rangle=\langle F, \Ad_{g^{-1}}X\rangle.\]
We will denote the stabilizer of this map by $\mathrm{Stab}(F)=\{g\in G:K(g)F=F\}$.
The derivative of the co-adjoint map is given by the equation 
\[\langle K_*(X)F, Y\rangle=\langle F, -\ad_{X}Y\rangle.\]
Given a $F$ we denote the \textit{co-adjoint orbit} by $\mathcal{O}_F=K(G)F\subset\mathfrak{g}$. We can identify the orbit with the quotient $\mathrm{Stab}(F)\setminus G =\{\SF g:g\in G\}$ by using the map $\kappa:\SF\setminus G\to \mathcal{O}_F$ defined as $\kappa(x)=K(x^{-1})F$, where $x\in \SF\setminus G$. 
In the case that $\SF=\{e\}$, the orbit can be identified with the original group $G$.
In this case, we will denote the inverse of $\kappa$ by $\kappa^{-1}:\mathcal{O}_F\to G$.

 \subsubsection{Symplectic structure of the orbit}\label{sec:orbit}
We will take a \emph{symplectic space} to mean a (smooth) manifold $M$ equipped with a closed non-degenerate $2$-form $\omega:\wedge^2 (T^*M)\to \mathbb{R}$. The form $\omega$ is referred to as the \emph{symplectic form}. All symplectic manifolds are even dimensional, hence we will denote the dimension of $M$ by $2d$. The simplest example of a symplectic manifold is $\mathbb{R}^{2d}$ endowed with the standard symplectic form \[\omega_{(x_1,...,x_d,y_1,...,y_d)}=\sum_{i=1}^d\mathrm{d}x_i\wedge \mathrm{d}y_i,\]
where $(x_1,...,x_d,y_1,...,y_d)\in \mathbb{R}^{2d}$ are the standard coordinates. The \emph{volume form} on a symplectic manifold is given by \[\mathrm{d}\omega = \frac{\omega^d}{d!}.\]

A diffeomorphism $f:M_1\to M_2$ between two symplectic manifolds $(M_j,\omega_j)$ is called a \emph{symplectomorphism} if $f^* (\omega_2)=\omega_1$, where $f^*$ denotes the pullback of the function $f$. If such a map $f$ exists between two symplectic manifolds, we say that the manifolds are \emph{symplectomorphic}.
Darboux's theorem states that every symplectic manifold of dimension $2d$ is locally symplectomorphic to $\mathbb{R}^{2d}$ endowed with the standard symplectic form.


Let $N\subset M$ be a submanifold of the symplectic manifold $(M,\omega)$. Then we say that $N$ is \emph{isotropic} if $\left.\omega\right|_{TN}=0$.
A \emph{Lagrangian manifold} is an isotropic manifold of maximal dimension $\mathrm{dim}(N)=\frac{1}{2}\mathrm{dim}(M)$. Alternatively, we can define a \emph{Lagrangian manifold} to be an isotropic manifold where $\omega_{TN^\perp}=0$. We define a \emph{Lagrangian fibration} to be a fibration where all the fibers are Lagrangian manifolds. Locally, any Lagrangian filtration can be written as $(x_1,\ldots,x_n,y_1,\ldots, y_n)$ where the symplectic form can be written as \[\omega_{(x_1,...,x_n,y_1,...,y_n)}=\sum_{j=1}^n\mathrm{d}x_j\wedge \mathrm{d}y_j.\]


 One can induce a symplectic form on the co-adjoint orbit $\mathcal{O}_F\sim \mathrm{Stab}(F)\setminus G$ as follows:
Define the symplectic form at the point $F\in \mathcal{O}_F$ by \[\omega_F(K_*(F)X,K_*(F)Y)=\langle F, [X,Y]\rangle ,\]
where $X,Y\in \mathfrak{g}/\sF$ that can be identified with $T_F^*\mathcal{O}_F$. We can define the symplectic form on the rest of the $T^*\mathcal{O}_F$ by using the group action on the orbit. Then the symplectic form is defined by $\omega_{\kappa(x)}=\kappa(x)^*\omega_F$. Hence by definition, the symplectic form becomes right invariant with respect to the group action.

A \emph{invariant complex-polarization} with respect to $\omega_F$ is a subalgebra $\mathfrak{h}$ of $\mathfrak{g}\otimes\mathbb{C}$ such that 
\begin{itemize}
    \item $\mathfrak{h}$ is a Lagrangian subspace of $\omega_F$.
    \item $\mathfrak{h}+\overline{\mathfrak{h}}$ is a subalgebra of $\mathfrak{g}\otimes\mathbb{C}$.
    \item $\Ad(s)\mathfrak{h}=\mathfrak{h}$ for every $s\in \mathrm{Stab}(F)$. \end{itemize}
Recall that we say that a subalgebra $\mathfrak{k}$ of $\mathfrak{g}\otimes \mathbb{C}$ is real if \[\mathfrak{k}=\overline{\mathfrak{k}}.\]
We let $\mathfrak{e}$ denote the largest real subalgebra of $\mathfrak{g}$, i.e.\ 
\[\mathfrak{e}=(\mathfrak{h}+\overline{\mathfrak{h}})\cap \mathfrak{g}.\] 
Additionally, we define the subalgebra $\mathfrak{d}$ to be the  \[\mathfrak{d}=(\mathfrak{h}\cap\overline{\mathfrak{h}})\cap \mathfrak{g}.\]
Notice that in the case that $\mathfrak{h}$ is a real subalgebra, we have that 
\[\mathfrak{e}=\mathfrak{d}.\]
\begin{definition}
    We say that an invariant complex polarization satisfies the Pukanszky condition if
    \[K(\exp(\mathfrak{d}))F=F+\mathrm{An}(\mathfrak{e}),\]
    where $\mathrm{An}(\mathfrak{e})$ denotes the annihilator of $\mathfrak{e}$.
\end{definition}

\subsubsection{Representations from co-adjoint orbits}\label{sec:rep_co_adj_orb}

In this section we will let $G$ be a exponential Lie group. Let $\mathcal{O}$ be a co-adjoint orbit containing a point $F\in \mathcal{O}$. Find a subgroup of maximal dimension $H$ with Lie algebra $\mathfrak{h}$ such that $\langle F, [\mathfrak{h},\mathfrak{h}]\rangle=0$. It can be shown that $\dim(H\setminus G)=\frac{1}{2}\dim(\mathcal{O})$. Define the character on $H$ by $\chi_F(h) = e^{2\pi i\langle F, \log(h)\rangle}$. Then the induced representation \[\pi_F=\mathrm{ind}_H^G\chi_F\]
is an irreducible representation on $G$.
In general, these representations are only square integrable when $\SF$ consists of only the identity element. 

\begin{example}[Heisenberg Group]
Continuing with the notation from Example \ref{ex:heisenberg}, let $X^*,\, Y^*$ and $Z^*$ be the dual basis of $X$, $Y$ and $Z$. 
Then we have that the co-adjoint map satisfies \[K(x,y,z)(aX^*+ bY^*+ cZ^*)=(a+yc)X^*+ (b-xc)Y*+cZ^*.\]
From the co-adjoint map, we see that we have two kinds of orbits. Setting $F=\hbar Z^*$ for $\hbar\not = 0$ gives the two-dimensional orbits
\[\mathcal{O}_\hbar=\{aX^*+ bY^*+\hbar Z^*:a,b\in \mathbb{R}\}.\]
When $F=aX^*+ bY^*$ we get the zero-dimensional orbits
\[\mathcal{O}_{(a,b)}=\{aX^*+ bY^* : a, b \in \R\}.\]

The representations corresponding to the orbits $\mathcal{O}_\hbar$ is $\pi_{-\hbar}$. Since the $\mathrm{Stab}(\hbar Z^*)=(0,0,z)$, this representation is not square integrable.
\end{example}
\begin{example}[Affine Group]
    The affine group has two two-dimensional orbits. Continuing with the notation from Example \ref{ex:affine_group}, let $U^*$ and $V^*$ be the dual basis to the Lie algebra consisting of $U$ and $V$.
        The co-adjoint map is given by \[K(a,x)(uU^*+vV^*)=(u+a^{-1}xv)U^*+a^{-1}vV^*.\]
        This means that the orbits are given by
        \[\mathcal{O}_+=\{K(a,x)V^*:(a,x)\in \mathrm{Aff}\}=\{uU^*+vV^*:v\in \mathbb{R},\, u>0\}\]
        and
        \[\mathcal{O}_-=\{K(a,x)(-V^*):(a,x)\in \mathrm{Aff}\}=\{uU^*+vV^*:v\in \mathbb{R},\, u<0\}.\]
        The affine group has the structure of semi-direct product between $R=(\mathbb{R}^+,\cdot)$ and $H=(\mathbb{R},+)$ with $\phi(a)x=a\cdot x$.
        Using Equation \ref{semi:representation}, we see that the orbit $\mathcal{O}_\pm$ with $F=\pm V^*$ gives the representations $\pi_\pm$.
\end{example}
\begin{example}[Shearlet Group]
        The shearlet group has two four-dimensional orbits.
        The dual basis is then denoted by $A^*,\,B^*,\, C^*$ and $D^*$.
        The co-adjoint map is given by \begin{align*}         
        K(a,s,x_1,x_2)(\alpha A^*+\beta B^*+\gamma C^*+\delta D^*)&=(\alpha+\frac{\beta s}{2\sqrt{a}}+\gamma\frac{2x_1-s x_2}{2a }+\frac{\delta x_2}{2\sqrt{a}})A^*\\&\qquad+(\frac{\beta}{\sqrt{a}} +\frac{\gamma x_2}{a})B^*+\frac{\gamma}{a}C^*-(\frac{\gamma s}{a}-\frac{\delta}{\sqrt{a}}) D^*.
        \end{align*}
        This means that the orbits are given by
        \[\mathcal{O}_+=\{K(x)C^*:x\in \mathbb{S}\}=\{\alpha A^*+\beta B^*+\gamma C^*+\delta D^*:\alpha, \beta, \delta\in \mathbb{R},\, \gamma>0\}\]
        and
        \[\mathcal{O}_-=\{K(x)(-C^*):x\in \mathbb{S}\}=\{\alpha A^*+\beta B^*+\gamma C^*+\delta D^*:\alpha, \beta, \delta\in \mathbb{R},\, \gamma<0\}.\]
Let $H=\exp(\mathrm{span}(D, C))$. We can describe the group as a semi-direct product: Define $\phi_{\cdot}:(\mathbb{R}^+\times \mathbb{R})\to \text{Aut}(\mathbb{R}^2)$ defined by $\phi_{(a,s)}=S_sA_a$. Then we can define $(\mathbb{R}^+\times \mathbb{R})\ltimes_\phi \mathbb{R}^2$. Choosing $F=\pm C^*$ then gives the induced representations $\pi_\pm$ from Example \ref{ex:shearlet}.
\end{example}

\subsection{Time-frequency analysis}\label{sec:prelim_TF}
Time-frequency analysis and its subfields are central pieces of applied harmonic analysis and have a strong connection to Weyl quantization. The (projective) representation $\pi(z) \psi(t) = \pi(x,\omega) \psi(t) = e^{2\pi i \omega t} \psi(t-x)$ used in Section \ref{sec:prelim_weyl} induces the main object of time-frequency analysis, the \emph{short-time Fourier transform}, defined as
\begin{align}\label{eq:stft_def}
    V_\phi \psi(z) = \langle \psi, \pi(z) \phi\rangle.    
\end{align}
Note that this quantity is a special case of a wavelet transform \eqref{eq:wavelet_def} if we disregard the $\pi(z)^*$ convention. 
In applications, it is often the squared modulus $|V_\phi \psi|^2$ of the short-time Fourier transform, the \emph{spectrogram}, that is used because it is non-negative. In view of Theorem \ref{theorem:duflo-moore} it also integrates to $1$ and so can be seen as a probability distribution on $\R^{2d}$.

As time-frequency analysis is only tangentially related to the subject at hand, we only give a short primer on those topics which will be needed later on. A standard reference for time-frequency analysis in which much of the content of this section can be found is \cite{grochenig2013foundations}.

We already saw the cross-Wigner distribution $W(\psi, \phi)$ in Section \ref{sec:prelim_weyl} as the dequantization of a rank-one operator. In time-frequency analysis, the cross-Wigner distribution is known as a time-frequency distribution related to short-time Fourier transform by
\begin{equation*}
    W(\psi, \phi)(x,\omega) = 2^d e^{4\pi i x \cdot \omega} V_{\check{\phi}} \psi(2x,2\omega).
\end{equation*}
The Wigner distribution also has an inversion formula and is uniquely determined by the window up to a constant, i.e., if $W(\psi) = W(\phi)$ then $\psi = c\phi$ where $|c| = 1$.

The more general form of the short-time Fourier transform, the wavelet transform $\mathcal{W}_\phi$ mentioned in \eqref{eq:wavelet_def}, is the typical generalization of time-frequency analysis. To define is, we need a locally compact group $G$, a Hilbert space $\mathcal{H}$ and a square integrable irreducible representation $\pi : G \to \mathcal{U}(\mathcal{H})$. The most classical incarnation of this setup, apart from time-frequency analysis, is \emph{time-scale analysis} on the \emph{affine group}, discussed in Section \ref{sec:group_examples_prelim}. 
In time-scale analysis, the squared modulus of the wavelet transform is called the \emph{scalogram} and is used similarly to the spectrogram. Other classical generalizations include the shearlet transform \cite{guo2006, Kutyniok2012} and various two-dimensional wavelet transforms such as the similitude transform \cite{Ali2000Wavelet, Antoine1996}.

\subsection{Quantum harmonic analysis}\label{sec:prelim_qha}
The theory of \emph{quantum harmonic analysis} (QHA), originally developed by R. Werner in 1984 \cite{werner1984}, lays out a framework in which many of the classical operations and results of harmonic analysis is generalized to operators. Specifically, convolutions are defined between functions on $\R^{2d}$ and operators on $L^2(\R^{d})$, as well as pairs of operators on $L^2(\R^d)$, as
\begin{align}\label{eq:func_op_op_op_weyl_def}
    f \star S = \int_{\R^{2d}} f(z) \pi(z) S \pi(z)^*\,dz,\qquad T \star S(z) = \tr\big(T \pi(z) PSP \pi(z)^*\big)
\end{align}
where $\pi(z) f(t) = \pi(x,\omega) f(t) = e^{2\pi i \omega \cdot t} f(t-x)$ is the projective representation of the Weyl-Heisenberg group from Section \ref{sec:prelim_TF} and $P : f \mapsto \check{f}$ is the parity operator. Together with the Fourier-Wigner transform $\FW : \mathcal{S}^1 \to C_0(\R^{2d})$, from Section \ref{sec:prelim_fw_fko}, defined as $\FW(S)(z) = e^{-i \pi x \cdot \omega} \tr(\pi(-z) S)$, they make up the main tools of QHA. Both convolution definitions in \eqref{eq:func_op_op_op_weyl_def} conjugate the operator $S$ by $\pi(z)$, this operation is called an \emph{operator translation} and is commonly denoted by $\alpha_z(S) = \pi(z) S \pi(z)^*$.

\begin{remark}
    Note that the above definitions are not consistent with the rest of this article but rather agree with those commonly found in the greater quantum harmonic analysis literature. When generalizing beyond the Weyl-Heisenberg group it is beneficial to adopt some alternative formulations. We will detail these and their motivation in Section \ref{sec:prelim_aqha} below.    
\end{remark}

\subsubsection{Standard properties}\label{sec:prelim_qha_key_props}
Below we collect some of the main properties of operator convolutions, all of which have counterparts in harmonic analysis. For proofs and more details, see \cite{Luef2018}.
\begin{proposition}\label{prop:op_conv_properties}
    Let $f,g \in L^1(\R^{2d}),\, S \in \mathcal{S}^p,\, T \in \mathcal{S}^q$ for $1 \leq p,q, r \leq \infty$ with $\frac{1}{p} + \frac{1}{q} = 1 + \frac{1}{r}$ and $R \in B(L^2)$ be compact, then
    \begin{enumerate}[label=(\roman*)]
        \item $f \star S$ is positive if $f$ is non-negative and $S$ is positive,
        \item \label{item:op_op_positive}$T \star S$ is non-negative if $T$ and $S$ are positive,
        \item $(f \star S)^* = \bar{f} \star S^*$,
        \item \label{item:ass1} $(f \star S) \star T = f * (S \star T)$,
        \item \label{item:ass2} $(f * g) \star S = f  \star (g \star S)$,
        \item \label{item:func_compact_conditon} $f \star R$ is compact,
        \item $\Vert f \star S \Vert_{\mathcal{S}^p} \leq \Vert f \Vert_{L^1} \Vert S \Vert_{\mathcal{S}^p}$,
        \item \label{item:op_op_conv_interpol} $\Vert T \star S \Vert_{L^r} \leq \Vert S \Vert_{\mathcal{S}^p} \Vert T \Vert_{\mathcal{S}^q}$.
    \end{enumerate}
\end{proposition}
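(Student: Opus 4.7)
The plan is to split the eight properties into three groups: (i)--(iii) which follow directly from the integral/trace definitions and the structure of operator translation, (iv)--(v) which are handled cleanly via the Fourier-Wigner transform $\FW$, and (vi)--(viii) which require standard functional-analytic tools (Minkowski's integral inequality, density of finite-rank operators, and complex interpolation).

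First I would address the positivity and adjoint claims. Writing $\alpha_z(S) = \pi(z) S \pi(z)^*$, this map preserves positivity (as conjugation by a unitary) and satisfies $\alpha_z(S)^* = \alpha_z(S^*)$. Then (i) is immediate from $f \star S = \int f(z) \alpha_z(S)\, dz$ since a weighted integral of positive operators with non-negative weights is positive in the quadratic-form sense. Item (iii) follows by pulling the adjoint and complex conjugate through the integral. For (ii), I would use that $P$ is unitary and self-adjoint, so $PSP$ is positive when $S$ is; hence $\alpha_z(PSP)$ is a positive operator, and the trace of the product of two positive operators is non-negative, giving $T \star S(z) \geq 0$ pointwise.

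For the associativity statements (iv) and (v), I would work on the Fourier side. The key intertwining relations are $\FW(f \star S) = \mathcal{F}_\sigma(f) \cdot \FW(S)$ and $\mathcal{F}_\sigma(T \star S) = \FW(T) \cdot \FW(S)$, which together with the convolution theorem $\mathcal{F}_\sigma(f * g) = \mathcal{F}_\sigma(f) \mathcal{F}_\sigma(g)$ reduce both associativity identities to associativity of pointwise multiplication on the Fourier side. One then transports the identities back by injectivity of $\FW$ on the relevant spaces. The main obstacle I anticipate here is not the algebraic manipulation but rather justifying the Fourier-Wigner intertwining rigorously on the mixed function--operator spaces, where one needs absolute convergence of the defining integrals and an application of Fubini; handling this likely proceeds by first establishing the identities for a dense class (e.g. Schwartz functions and finite-rank operators) and then extending by continuity using the estimates of (vii) and (viii).

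The remaining items are essentially standard. Item (vii) is an application of Minkowski's integral inequality to $\| f \star S \|_{\mathcal{S}^p} \leq \int |f(z)|\, \|\alpha_z(S)\|_{\mathcal{S}^p}\, dz$, combined with the unitary invariance $\|\alpha_z(S)\|_{\mathcal{S}^p} = \|S\|_{\mathcal{S}^p}$. For (vi), I would first verify compactness of $f \star R$ when $R$ is a rank-one operator $\phi \otimes \psi$ by direct computation (the resulting integral operator has $L^2$ kernel), then approximate a general compact $R$ by finite-rank $R_n \to R$ in operator norm, applying (vii) with $p = \infty$ to conclude that $f \star R_n \to f \star R$ in operator norm, whence $f \star R$ is compact as a uniform limit of compact operators. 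Finally, (viii) follows by first checking the two endpoint cases $(p,q) = (1,\infty)$ and $(\infty, 1)$, where H\"older applied inside the trace $\tr(T \alpha_z(PSP))$ yields the claim directly, and then invoking complex interpolation in the operator exponents to cover intermediate $(p,q,r)$.
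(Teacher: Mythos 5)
First, note that the paper does not prove this proposition itself: it is a preliminaries item quoted from the quantum harmonic analysis literature with an explicit pointer to the reference given after the statement, so your proposal has to be measured against the standard proofs there (Werner's original argument and its modern reformulation). Against that benchmark, items (i)--(iii), (vi) and (vii) are handled correctly and essentially as in the literature. For the associativity statements (iv)--(v) you take a genuinely different route: the standard proofs are direct Fubini computations combined with the translation covariance $\tr\big(\alpha_{z'}(S)\,\pi(z)PTP\pi(z)^*\big) = (S\star T)(z-z')$, whereas you pass to the Fourier side via $\FW$ and $\mathcal{F}_\sigma$. Your route is viable but, as you note yourself, it only applies directly on trace class and must be bootstrapped by density; the direct computation avoids this and is why the literature prefers it. Either way this part is fine.

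The genuine gap is in item (viii). Interpolating between the two endpoints $(p,q)=(1,\infty)$ and $(\infty,1)$ only covers the segment $\tfrac{1}{p}+\tfrac{1}{q}=1$, i.e.\ the case $r=\infty$, where H\"older inside the trace indeed suffices. The full range $\tfrac{1}{p}+\tfrac{1}{q}=1+\tfrac{1}{r}$ is a triangle whose third vertex is $p=q=r=1$, and that endpoint, $\Vert T\star S\Vert_{L^1}\leq \Vert T\Vert_{\mathcal{S}^1}\Vert S\Vert_{\mathcal{S}^1}$, is the substantive content of the proposition: it asserts that the convolution of two trace-class operators is \emph{integrable} over all of $\R^{2d}$, which no pointwise H\"older estimate of $|\tr(T\pi(z)PSP\pi(z)^*)|$ can deliver since $\R^{2d}$ has infinite measure. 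The standard proof of this endpoint reduces to rank-one operators, where $T\star S(z)$ becomes a product of two short-time Fourier transforms, and then applies Cauchy--Schwarz together with Moyal's identity (Theorem \ref{theorem:moyal_wigner} / the orthogonality relations) before summing over singular value decompositions. Without this endpoint your bilinear interpolation cannot reach any $r<\infty$, so as written the argument for (viii) fails for precisely the cases that matter (e.g.\ $p=q=r=1$ and the Hilbert--Schmidt case $p=q=2$, $r=1$ used elsewhere in the paper).
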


To formulate Fourier-analytic results we need a Fourier transform which works on the phase space $\R^{2d}$. This will be the same symplectic Fourier transform as in Section \ref{sec:prelim_fw_fko}, namely
\begin{align}\label{eq:F_sigma_def}
    \mathcal{F}_\sigma(f)(z) = \int_{\R^{2d}} f(z') e^{-2\pi i \sigma(z,z')}\,dz',
\end{align}
where $z = (x,\omega),\, z' = (x', \omega')$ and $\sigma(z,z') = \omega x' - \omega' x$ is the standard symplectic form. The correct setting to investigate Fourier-related properties is requiring all operators to be of trace-class and here we have some more noteworthy results.
\begin{proposition}\label{prop:more_qha_props}
    Let $f \in L^1(\R^{2d})$ and $T, S \in \mathcal{S}^1$, then
    \begin{enumerate}[label=(\roman*)]
        \item \label{item:conv_theorem_func_op}$\FW(f \star S) = \mathcal{F}_\sigma(f) \FW(S)$,
        \item \label{item:conv_theorem_op_op}$\mathcal{F}_\sigma(T \star S) = \FW(T) \FW(S)$,
        \item $\tr(f \star S) = \tr(S) \int_{\R^{2d}} f(z)\,dz$,
        \item \label{item:op_op_conv_explicit}$\int_{\R^{2d}} T \star S(z)\,dz = \tr(T)\tr(S)$.
    \end{enumerate}
\end{proposition}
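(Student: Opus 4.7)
The plan is to prove (i) by direct computation with the Weyl commutation relation, to deduce (ii) via a Plancherel-type duality argument, and then to observe that (iii) and (iv) essentially fall out from (i) and (ii) respectively.

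For (i), I would start from the definition
\[
\FW(f \star S)(w) = e^{-\pi i y\eta}\tr\bigl(\pi(-w)(f\star S)\bigr), \qquad w=(y,\eta),
\]
pull the trace inside the Bochner integral, and use cyclicity to write $\tr(\pi(-w)\pi(z)S\pi(z)^*) = \tr(\pi(z)^*\pi(-w)\pi(z)S)$. A short Schr\"odinger-representation calculation yields the Weyl commutation $\pi(z)^*\pi(-w)\pi(z) = e^{2\pi i\sigma(z,w)}\pi(-w)$. Factoring this phase out and using $\sigma(z,w)=-\sigma(w,z)$, the remaining $z$-integral is exactly $\mathcal{F}_\sigma(f)(w)$, while the trace that survives recombines with $e^{-\pi i y\eta}$ into $\FW(S)(w)$.

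For (ii), I would first establish the Fourier-inverted form $T\star S = \mathcal{F}_\sigma(\FW(T)\FW(S))$; since $\mathcal{F}_\sigma$ is an involution on $L^2$, applying it to both sides produces (ii). To prove that form, write $T\star S(z) = \langle T,\, \pi(z)PS^*P\pi(z)^* \rangle_{\mathcal{S}^2}$ and apply Plancherel $\langle A,B\rangle_{\mathcal{S}^2} = \langle \FW(A),\FW(B)\rangle_{L^2}$. Two short auxiliary identities obtained by the same cyclicity-and-commutation trick as in (i), namely $\FW(\pi(z)A\pi(z)^*)(w) = e^{2\pi i\sigma(z,w)}\FW(A)(w)$ and $\FW(PS^*P)(w) = \overline{\FW(S)(w)}$, then give
\[
T\star S(z) = \int_{\R^{2d}} \FW(T)(w)\FW(S)(w)\, e^{-2\pi i\sigma(z,w)}\,dw = \mathcal{F}_\sigma\bigl(\FW(T)\FW(S)\bigr)(z).
\]

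For (iii), cyclicity gives $\tr(\pi(z)S\pi(z)^*) = \tr(S)$ for every $z$, so swapping trace and integral in $f\star S$ immediately yields $\tr(f\star S) = \tr(S)\int f(z)\,dz$. For (iv), I would evaluate the identity in (ii) at $w=0$: since $\FW(S)(0) = \tr(S)$ and $\mathcal{F}_\sigma(T\star S)(0) = \int T\star S(z)\,dz$, the formula follows at once.

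The main obstacle is the consistent bookkeeping of phase factors: the $e^{-\pi i x\omega}$ normalization in $\FW$, the cocycle in $\pi(z_1)\pi(z_2)$, and the antisymmetry $\sigma(z,w) = -\sigma(w,z)$ all interact, and a misplaced sign or factor of $2$ in the Weyl commutation can disguise the cancellations. The interchange of trace and Bochner integral, together with the Plancherel identity on $\mathcal{S}^2$, are routinely justified since $f\in L^1$ and $T,S\in \mathcal{S}^1\subset \mathcal{S}^2$.
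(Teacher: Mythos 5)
Your proof is correct, but it is not the route the paper takes. The paper does not prove Proposition \ref{prop:more_qha_props} by direct computation at all: it refers to \cite{Luef2018} for the QHA preliminaries and, in Section \ref{sec:prelim_qha_weyl_relation}, derives all four items from the Weyl-quantization identities $f \star S = A_{f * a_S}$, $T \star S = a_T * a_S$ and $a_S = \mathcal{F}_\sigma(\FW(S))$, so that (i) and (ii) become instances of the ordinary convolution theorem for $\mathcal{F}_\sigma$ and (iii), (iv) follow from the single observation $\int_{\R^{2d}} a_S(z)\,dz = \FW(S)(0) = \tr(S)$. Your argument instead works directly from the definitions: the Weyl commutation relation $\pi(z)^*\pi(-w)\pi(z) = e^{2\pi i \sigma(z,w)}\pi(-w)$ (which I checked is correct with the paper's conventions $\pi(x,\omega)f(t) = e^{2\pi i \omega t}f(t-x)$ and $\sigma(z,z') = \omega x' - \omega' x$), the covariance $\FW(\pi(z)A\pi(z)^*)(w) = e^{2\pi i\sigma(z,w)}\FW(A)(w)$, the parity identity $\FW(PS^*P) = \overline{\FW(S)}$, and Moyal/Plancherel for $\FW$ on $\mathcal{S}^2$; all the phase bookkeeping works out, and the passage from $T\star S = \mathcal{F}_\sigma(\FW(T)\FW(S))$ to (ii) is legitimate because $\FW(T)\FW(S) \in L^1$ as a product of two $L^2$ functions and $\mathcal{F}_\sigma$ is an involution. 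The trade-off is that your computation is self-contained and does not presuppose the quantization relations \eqref{eq:weyl_quant_qha_conseq} (which themselves need proof), whereas the paper's derivation is shorter but leans on that machinery; your approach is essentially the one found in the cited QHA literature.
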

These results should be seen as generalizations and consequences of the standard convolution theorem. 

Part of the power of quantum harmonic analysis is that various seemingly unrelated objects can be realized as operator convolutions where their properties have clear explanations. Examples of this include localization operators and Cohen's class distributions from time-frequency analysis \cite{Luef2018, Luef2019} (Section \ref{sec:prelim_TF}) and Bergman-Fock Toeplitz operators \cite{luef2021jfa, Fulsche2023QHAFock}.

\subsubsection{Relation to Weyl quantization}\label{sec:prelim_qha_weyl_relation}
As quantum harmonic analysis is concerned with the interplay between functions and operators, it should come as no surprise that the theory is intimately connected with that of Weyl quantization. In fact, one can use Weyl quantization to induce all of the main operations of QHA by noting that
\begin{align}\label{eq:weyl_quant_qha_conseq}
    T\star S(z) = a_T * a_S (z),\qquad f \star S = A_{f * a_S}.
\end{align}
Moreover, Weyl quantization can be written explicitly as
\begin{align}\label{eq:weyl_quantization_classical}
    a_S = \mathcal{F}_\sigma (\FW(S))
\end{align}
when $S \in \mathcal{S}^2$, so the two central Fourier transforms of QHA make up the core of the Weyl quantization procedure. From \eqref{eq:weyl_quant_qha_conseq} and \eqref{eq:weyl_quantization_classical}, all of Proposition \ref{prop:more_qha_props} follow once we note that
\begin{align*}
    \int_{\R^{2d}} a_S(z)\,dz = \mathcal{F}_\sigma(a_S)(0) = \mathcal{F}_\sigma(\mathcal{F}_\sigma(\FW(S)))(0) = \FW(S)(0) = \tr(S).
\end{align*}
The action of translating an operator using the operator translation $\alpha_z : S \mapsto \pi(z)S\pi(z)^*$ which is used in quantum harmonic analysis can also be seen as being induced through Weyl quantization via the relation
\begin{align*}
    \alpha_z(S) = A_{T_z a_S}
\end{align*}
from Proposition \ref{prop:classic_quantization_props}, where $T_z f(x) = f(x-z)$ is the translation operator.

\subsubsection{Generalizing beyond the Weyl-Heisenberg group}\label{sec:prelim_aqha}
The setting of $L^2(\R^d)$ as the Hilbert space on which operators act and $\pi : \R^{2d} \to \mathcal{U}(L^2(\R^d))$ as the associated representation has underpinned our discussion of QHA so far. The work on generalizing this beyond the Weyl-Heisenberg group started in \cite{Berge2022} where $\R^{2d}$ was replaced by the affine group $\operatorname{Aff}$, the Hilbert space $L^2(\R^d)$ specialized to $L^2(\R^+)$ and the irreducible representation set to
\begin{align*}
U : \operatorname{Aff} \to \mathcal{U}(L^2(\R^+)),\qquad U(x,a) \psi(t) = e^{2\pi i x \cdot t} \psi(at).    
\end{align*}
In this setting, a suitable Wigner transform was also developed using the work in \cite{Berge2022affine} which required the introduction of a suitable replacement of the symplectic Fourier transform, namely the \emph{Fourier-Kirillov transform} which acts on $\operatorname{Aff}$.
Later on in \cite{Halvdansson2023}, using similar techniques, the setup was further generalized to allow for irreducible representations $\pi$ of (separable) locally compact groups $G$. In this setting and the affine, the function-operator and operator-operator convolutions take the following form with the right Haar measure convention:
\begin{align*}
    f \star S = \int_{G} f(x) \pi(x)^* S \pi(x)\dmr(x),\qquad T \star S(x) = \tr\big(T \pi(x)^* S \pi(x)\big).
\end{align*}
For these convolutions, variants of Proposition \ref{prop:op_conv_properties} and \ref{prop:more_qha_props} hold with the exception of the two Fourier properties. QHA was also generalized to the non-separable abelian setting in \cite{Fulsche2025}.

It turns out that most of the difficulties related to generalizing QHA stem from the possible non-unimodularity of the underlying group. Specifically, this leads to the introduction of the concept of \emph{admissible} operators which are operators $S$ on $\mathcal{H}$ such that $\D^{-1} S \D^{-1}$ is trace-class. One central result is that an operator-operator convolution only is integrable if one of the operators is trace-class and the other is admissible as is clear from the formula
\begin{align}\label{eq:qha_op_op_conv}
    \int_G T \star S(x) \dmr(x) = \tr(T) \tr(\D^{-1}S \D^{-1}).
\end{align}

\section{Two Fourier Transforms}\label{sec:main_weyl}
We are now ready to set up the Fourier-Wigner and Fourier-Kirillov transform alluded to in the introduction. Throughout the section we will assume that $\SF=\{e\}$ of $G$, and hence the co-adjoint orbit have the same dimension as the group. We will also assume that $H$ is a closed subgroup inducing the representation described in Section \ref{sec:prelim_rep_theory_exp_lie}. 

\subsection{Fourier-Wigner transform}\label{sec:FW}
The Fourier-Wigner transform defined in this section is the Fourier transform in the Plancherel theorem, see e.g. \cite[Thm.~3.48]{fuhr2004}, restricted to the representation $\pi$. For convenience, we give the proofs of properties we will need in later sections.

Let $\pi_F:G\to \mathcal{H}$ be the representation associated to the point $F\in \mathfrak{g}^*$ discussed in Section \ref{sec:rep_co_adj_orb}, where $\mathrm{dim}(\mathcal{O}_F)=\mathrm{dim}(G)$. From now on, we will drop the $F$ subscript. 

\begin{definition}[Fourier-Wigner transform]\label{def:fourier_wigner}
    Let $A\in \mathcal{S}^2$ be such that $A\D$ extends to a trace class operator. We then define the \emph{Fourier-Wigner transform} $\FW(A)$ as
    \begin{align*}
        \FW(A)(x)=\tr(A\D\pi(x)).
    \end{align*}
    If $f \in L^1_r(G)$ and $\phi\in \mathrm{Dom}(\D)$, then we define the \emph{inverse Fourier-Wigner transform} by
\begin{align*}
    \FW^{-1}(f)(\phi)=(\pi(\check{f}) \circ \D )\phi = \int_G f(x)\pi(x^{-1})\D\phi\dmr(x).   
\end{align*}
\end{definition}
We will now show that the Fourier-Wigner transform extends to an injective isometry from $\mathcal{S}^2(\mathcal{H})\to L^2_r(G)$, with the additional property that the inverse Fourier-Wigner transform is a left inverse. To do so, we first need a preliminary result.

\begin{lemma}\label{lemma:dense_subspace_xd}
    The space $\mathcal{S}^1\D^{-1}\cap \mathcal{S}^2$ is a dense subspace of $\mathcal{S}^2$.
\end{lemma}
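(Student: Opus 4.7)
The plan is to approximate an arbitrary $S \in \mathcal{S}^2$ by finite rank operators whose ``right factor'' lies in $\mathrm{Dom}(\D)$, as such operators can be explicitly written in the form $T\D^{-1}$ with $T \in \mathcal{S}^1$.

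First I would record the preliminary observation that $\mathrm{Dom}(\D)$ is dense in $\mathcal{H}$. Since $\D^{-1}$ is positive and self-adjoint (in particular injective), one has
\[
    \mathrm{Dom}(\D) = \mathrm{Im}(\D^{-1}) \supseteq \mathrm{Im}(\D^{-1})^{\perp\perp} = \ker(\D^{-1})^\perp = \mathcal{H},
\]
so $\mathrm{Dom}(\D)$ is dense in $\mathcal{H}$.

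Next I would verify that every rank-one operator $\psi \otimes \phi$ with $\psi \in \mathcal{H}$ and $\phi \in \mathrm{Dom}(\D)$ lies in $\mathcal{S}^1 \D^{-1} \cap \mathcal{S}^2$. Using self-adjointness of $\D$ and the identity $\D\D^{-1}\xi = \xi$ on $\mathrm{Dom}(\D^{-1})$, a direct computation for $\xi \in \mathrm{Dom}(\D^{-1})$ gives
\[
    \bigl((\psi \otimes \D\phi)\D^{-1}\bigr)\xi = \langle \D^{-1}\xi, \D\phi \rangle \psi = \langle \xi, \phi\rangle \psi = (\psi \otimes \phi)\xi,
\]
so $\psi \otimes \phi = (\psi \otimes \D\phi)\D^{-1}$ as bounded operators, where $\psi \otimes \D\phi$ is finite rank and hence in $\mathcal{S}^1$.

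Finally, I would establish density. Given $S \in \mathcal{S}^2$, the singular value decomposition yields $S = \sum_n s_n (u_n \otimes v_n)$ with convergence in Hilbert--Schmidt norm, and the truncations $S_N = \sum_{n \leq N} s_n (u_n \otimes v_n)$ satisfy $S_N \to S$ in $\mathcal{S}^2$. Using the density from the first step, choose $v_n^\varepsilon \in \mathrm{Dom}(\D)$ with $\|v_n - v_n^\varepsilon\| < \varepsilon/(N s_n)$, and set $S_N^\varepsilon = \sum_{n \leq N} s_n (u_n \otimes v_n^\varepsilon)$. Then
\[
    \|S_N - S_N^\varepsilon\|_{\mathcal{S}^2} \leq \sum_{n \leq N} s_n \|u_n\|\, \|v_n - v_n^\varepsilon\| < \varepsilon,
\]
and by the second step $S_N^\varepsilon \in \mathcal{S}^1 \D^{-1} \cap \mathcal{S}^2$. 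A diagonal argument in $N$ and $\varepsilon$ completes the approximation.

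The proof is essentially routine; the only subtle point is making sure the equality $\psi \otimes \phi = (\psi \otimes \D\phi)\D^{-1}$ is valid as bounded operators rather than merely on $\mathrm{Dom}(\D^{-1})$, which is handled by noting both sides are bounded and agree on a dense subspace.
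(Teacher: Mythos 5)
Your proof is correct and follows essentially the same route as the paper: truncate the singular value decomposition, perturb the right-hand vectors into $\operatorname{Dom}(\D)$ using its density, and observe that the resulting finite-rank operator factors as $T\D^{-1}$ with $T$ finite rank (the paper even writes the perturbed vectors as $\D^{-1}\xi_n$ with $\xi_n = \D\phi_n'$, which is your factorization in disguise). One slip: your displayed chain for density of $\operatorname{Dom}(\D)$ has the inclusion backwards --- one has $\operatorname{Im}(\D^{-1}) \subseteq \operatorname{Im}(\D^{-1})^{\perp\perp} = \ker(\D^{-1})^{\perp} = \mathcal{H}$, where the double orthocomplement is the \emph{closure} of the range, so the correct conclusion is that $\operatorname{Dom}(\D)$ is dense (as you state), not that it equals $\mathcal{H}$.
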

\begin{proof}
    Let $A = \sum_n a_n (\psi_n \otimes \phi_n)$ be an arbitrary operator in $\mathcal{S}^2$, fix $\varepsilon > 0$ and define
    \begin{align*}
        A_N = \sum_{n=1}^N a_n (\psi_n \otimes \phi_n),\qquad X_N = \sum_{n=1}^N a_n(\psi_n \otimes \D^{-1} \xi_n)
    \end{align*}
    where $\xi_n = \D \phi_n'$ and $\phi_n'$ are functions in the domain of $\D$ which satisfy $\Vert \phi_n - \phi_n' \Vert < \varepsilon$ for each $n$. This is possible because the domain of $\D$ is dense in $\mathcal{H}$ by Theorem \ref{theorem:duflo-moore}. Since $A \in \mathcal{S}^2$, there exists a $N$ such that $\Vert A - A_N \Vert_{\mathcal{S}^2} < \varepsilon$. We now claim that $\Vert A - X_N \Vert_{\mathcal{S}^2} < \varepsilon(1 + \Vert A \Vert_{\mathcal{S}^2})$, which can be made arbitrarily small, and that $X_N \D \in \mathcal{S}^1$. Indeed, $X_N \D \in \mathcal{S}^1$ since
    \begin{align*}
        X_N \D = \sum_{n=1}^N a_n (\psi_n \otimes \xi_n)
    \end{align*}
    is a finite sum of bounded rank-one operators and $\psi_n, \xi_n \in \mathcal{H}$. Moreover, $\Vert X_N - A_N \Vert_{\mathcal{S}^2}$ can be bounded as
    \begin{align*}
        \Vert X_N - A_N \Vert_{\mathcal{S}^2} &\leq \left( \sum_{n=1}^N |a_n|^2 \Vert \psi_n \Vert \big\Vert \D^{-1} \xi_n - \phi_n \big\Vert \right)^{1/2} = \left( \sum_{n=1}^N |a_n|^2 \big\Vert \phi_n' - \phi_n \big\Vert \right)^{1/2}\\
        &\leq \varepsilon \Vert A_N \Vert_{\mathcal{S}^2} \leq \varepsilon \Vert A \Vert_{\mathcal{S}^2},
    \end{align*}
    finishing the proof.
\end{proof}

\begin{proposition}\label{prop:fourier_wigner_rank_one}
    Let $\psi \in \mathcal{H}$ and $\phi \in \operatorname{Dom}(\D^{-1})$. Then
    \begin{align*}
        \FW(\psi \otimes \D^{-1} \phi)(x) = \mathcal{W}_\phi \psi(x).
    \end{align*}
\end{proposition}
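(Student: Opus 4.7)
The plan is to compute the trace $\tr((\psi \otimes \D^{-1}\phi)\D\pi(x))$ directly and exploit that $\D$ absorbs into the admissible factor $\D^{-1}\phi$ to leave a clean rank-one operator whose trace is exactly the wavelet coefficient.

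First I would check that $\FW(\psi \otimes \D^{-1}\phi)$ is actually well-defined in the sense of Definition \ref{def:fourier_wigner}, i.e.\ that $(\psi \otimes \D^{-1}\phi)\D$ extends to a trace-class operator. For $\xi \in \operatorname{Dom}(\D)$ one simply computes
\begin{equation*}
    (\psi \otimes \D^{-1}\phi)\D(\xi) \;=\; \langle \D\xi,\D^{-1}\phi\rangle\,\psi \;=\; \langle \xi,\phi\rangle\,\psi,
\end{equation*}
using self-adjointness of $\D$ together with $\phi \in \operatorname{Dom}(\D^{-1})$. Since $\operatorname{Dom}(\D)$ is dense in $\mathcal{H}$, this extends by continuity to the rank-one operator $\psi \otimes \phi$, which is trace-class.

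Next, I would evaluate $\FW(\psi \otimes \D^{-1}\phi)(x)$ by applying $\pi(x)$ on the right. For $\xi \in \operatorname{Dom}(\D)$ (which equals $\operatorname{Dom}(\D\pi(x))$ by Lemma \ref{lemma:haar_measure_properties}(iv)), associativity gives
\begin{equation*}
    (\psi \otimes \D^{-1}\phi)\D\pi(x)(\xi) \;=\; \langle \pi(x)\xi,\phi\rangle\,\psi \;=\; \langle \xi,\pi(x)^*\phi\rangle\,\psi,
\end{equation*}
so the trace-class extension of $(\psi \otimes \D^{-1}\phi)\D\pi(x)$ is precisely $\psi \otimes \pi(x)^*\phi$. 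Taking the trace via the standard rank-one identity $\tr(\xi \otimes \eta)=\langle \xi,\eta\rangle$ then yields
\begin{equation*}
    \FW(\psi \otimes \D^{-1}\phi)(x) \;=\; \langle \psi,\pi(x)^*\phi\rangle_{\mathcal{H}} \;=\; \mathcal{W}_\phi\psi(x),
\end{equation*}
which is the claim.

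The only genuine subtlety is keeping track of the unbounded operator $\D$ on its proper domain. Because both the cancellation $\D\D^{-1}\phi = \phi$ and the associativity $(A\D)\pi(x)=A(\D\pi(x))$ need to be justified initially on a dense domain before invoking the trace-class extension, I would be careful to perform the computation first on $\operatorname{Dom}(\D)$ and then pass to the closure, rather than manipulating unbounded operators formally; Lemma \ref{lemma:dense_subspace_xd} is not needed here, but the domain bookkeeping plays the role it would in the general extension argument.
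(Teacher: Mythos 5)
Your proof is correct and follows essentially the same route as the paper: both reduce $\FW(\psi \otimes \D^{-1}\phi)(x) = \tr\big((\psi\otimes\D^{-1}\phi)\D\pi(x)\big)$ to $\tr\big((\psi\otimes\phi)\pi(x)\big) = \langle\psi,\pi(x)^*\phi\rangle$ by cancelling $\D^{-1}\D$, the paper via the operator identity $\psi\otimes\D^{-1}\phi = (\psi\otimes\phi)\D^{-1}$ and you via the equivalent computation on test vectors in $\operatorname{Dom}(\D)$. Your extra domain bookkeeping is a welcome (if minor) refinement of the paper's more formal manipulation.
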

\begin{proof}
    Plugging in the definition of $\FW$, we find that
    \begin{align*}
        \FW(\psi \otimes \D^{-1} \phi)(x) &= \FW((\psi \otimes \phi) \D^{-1})(x)\\
        &= \tr\big( (\psi \otimes \phi) \pi(x) \big)\\
        &= \langle \psi, \pi(x)^* \phi\rangle.
    \end{align*}
\end{proof}
Using linearity, we can use this last result to define $\FW$ on all of $\mathcal{S}^1\D^{-1}$ using the singular value decomposition. In the same way, we will now show how $\FW$ can be further extended to all of $\mathcal{S}^2$ using Lemma \ref{lemma:dense_subspace_xd}.

\begin{proposition}\label{prop:FW_isometry}
    The Fourier-Wigner transform $\FW : \mathcal{S}^1\D^{-1} \to L^2_r(G)$ can be continuously extended to all of $\mathcal{S}^2$ as an injective isometry meaning that
    $$
    \Vert A \Vert_{\mathcal{S}^2} = \Vert \FW(A) \Vert_{L^2_r}
    $$
    for all $A \in \mathcal{S}^2$.
\end{proposition}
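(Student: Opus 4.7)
The plan is to establish the isometry on the dense subspace $\mathcal{S}^1\D^{-1} \cap \mathcal{S}^2$ and then invoke the bounded linear transformation theorem. Lemma \ref{lemma:dense_subspace_xd} provides the density, and the approximants $X_N$ constructed in its proof are finite linear combinations $\sum_{n=1}^{N} a_n (\psi_n \otimes \D^{-1}\xi_n)$ with $\xi_n = \D\phi_n'$ for some $\phi_n' \in \mathrm{Dom}(\D)$. In particular each $\xi_n$ lies in $\mathrm{Range}(\D) = \mathrm{Dom}(\D^{-1})$, so that Proposition \ref{prop:fourier_wigner_rank_one} applies termwise. Hence it suffices to verify the isometry on such finite sums of rank-one operators.

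The rank-one case follows by combining Proposition \ref{prop:fourier_wigner_rank_one} with the Duflo-Moore orthogonality relation. For $A_i = \psi_i \otimes \D^{-1}\phi_i$ with $\phi_i \in \mathrm{Dom}(\D^{-1})$, a direct Hilbert-Schmidt inner-product computation (using $(\psi \otimes \eta)^* = \eta \otimes \psi$, $(\psi \otimes \eta)(\psi' \otimes \eta') = \langle \psi', \eta \rangle\, \psi \otimes \eta'$, and $\tr(\eta \otimes \xi) = \langle \eta, \xi \rangle$) yields
$$\langle A_1, A_2 \rangle_{\mathcal{S}^2} = \langle \psi_1, \psi_2 \rangle_\mathcal{H}\,\overline{\langle \D^{-1}\phi_1, \D^{-1}\phi_2 \rangle_\mathcal{H}},$$
which by Theorem \ref{theorem:duflo-moore} coincides with $\langle \mathcal{W}_{\phi_1}\psi_1, \mathcal{W}_{\phi_2}\psi_2 \rangle_{L^2_r}$. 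Since Proposition \ref{prop:fourier_wigner_rank_one} gives $\FW(A_i) = \mathcal{W}_{\phi_i}\psi_i$, we deduce $\langle \FW(A_1), \FW(A_2) \rangle_{L^2_r} = \langle A_1, A_2 \rangle_{\mathcal{S}^2}$ on rank-ones. Expanding the inner products of finite sums by sesquilinearity in both spaces and matching cross-terms then promotes this identity to all of $\mathcal{S}^1\D^{-1} \cap \mathcal{S}^2$, and polarization delivers the norm statement $\|\FW(A)\|_{L^2_r} = \|A\|_{\mathcal{S}^2}$ on the dense subspace.

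With isometry on a dense subspace established, the bounded linear transformation theorem extends $\FW$ uniquely to an isometry $\overline{\FW} : \mathcal{S}^2 \to L^2_r(G)$; injectivity is automatic because isometries on Hilbert spaces have trivial kernel. I expect the main obstacle to be the second step: identifying the precise form of the approximants produced by Lemma \ref{lemma:dense_subspace_xd}, verifying the domain condition $\xi_n \in \mathrm{Dom}(\D^{-1})$ so that Proposition \ref{prop:fourier_wigner_rank_one} applies term by term, and then checking that the rank-one inner-product identity survives after all cross-terms in $\langle \sum_n a_n A_n, \sum_m b_m A_m \rangle$ are expanded in both the $\mathcal{S}^2$ and $L^2_r$ inner products. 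Once that bookkeeping is settled, density and BLT conclude the argument.
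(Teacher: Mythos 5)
Your proposal is correct and follows essentially the same route as the paper: verify the inner-product identity on rank-one operators of the form $\psi \otimes \D^{-1}\phi$ via Proposition \ref{prop:fourier_wigner_rank_one} and the Duflo--Moore relation, extend by sesquilinearity over the dense subspace $\mathcal{S}^1\D^{-1} \cap \mathcal{S}^2$ from Lemma \ref{lemma:dense_subspace_xd}, and conclude by continuous extension. The only cosmetic difference is that the paper organizes the sum via the singular value decomposition with orthonormal sequences rather than matching general cross-terms, but the computation is identical.
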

\begin{proof}
    We will show that $\FW$ is an isometry on $\mathcal{S}^1\D^{-1}\cap \mathcal{S}^2$ which is dense in $\mathcal{S}^2$ by Lemma \ref{lemma:dense_subspace_xd}. Let $A \in \mathcal{S}^1 \D^{-1}$ so that we can write
    \begin{align*}
        A = \sum_n s_n (\psi_n \otimes \phi_n)
    \end{align*}
    where $\phi_n \in \operatorname{Dom}(\D)$ and both $(\psi_n)_n, (\phi_n)_n$ are orthonormal sequences in $\mathcal{H}$. We can now use Proposition \ref{prop:fourier_wigner_rank_one} to conclude that,
    \begin{align*}
        \Vert \FW(A) \Vert_{L^2_r}^2 = \big\langle \FW(A), \FW(A) \big\rangle &= \sum_n \sum_m s_n \overline{s_m} \big\langle \FW(\psi_n \otimes \phi_n), \FW(\psi_m \otimes \phi_m) \big\rangle\\
        &=\sum_n \sum_m s_n \overline{s_m} \big\langle \mathcal{W}_{\D \phi_n} \psi_n, \mathcal{W}_{\D \phi_m} \psi_m \big\rangle\\
        &=\sum_n \sum_m s_n \overline{s_m} \langle \psi_n, \psi_m \rangle \overline{\langle \phi_n, \phi_m \rangle} \\
        &=\sum_n |s_n|^2 \Vert \phi_n \Vert^2 \Vert \psi_n \Vert^2 = \sum_n |s_n|^2 = \Vert A \Vert_{\mathcal{S}^2}^2
    \end{align*}
    where we made use of the Duflo-Moore theorem (Theorem \ref{theorem:duflo-moore}). 
\end{proof}
The inverse Fourier-Wigner transform $\FW^{-1}$ is a left inverse to $\FW$ as we show in this next proposition. 
\begin{proposition}\label{prop:FW_identity}
    The composition $\FW^{-1} \circ \FW$ is the identity operator on $\mathcal{S}^2$.
\end{proposition}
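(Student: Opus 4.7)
The plan is to prove the identity on the dense subspace $\mathcal{S}^1 \D^{-1} \cap \mathcal{S}^2$ provided by Lemma \ref{lemma:dense_subspace_xd}, and then extend by continuity using Proposition \ref{prop:FW_isometry}. By linearity and the singular value decomposition employed in the proof of that lemma, it suffices to verify the identity on rank-one operators of the form $A = \psi \otimes \D^{-1} \phi$ where $\psi \in \mathcal{H}$ and $\phi \in \mathrm{Dom}(\D^{-1})$. For such operators, Proposition \ref{prop:fourier_wigner_rank_one} immediately gives $\FW(A) = \mathcal{W}_\phi \psi$, so the problem reduces to recovering $\psi \otimes \D^{-1}\phi$ from its wavelet transform via the formula defining $\FW^{-1}$.

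To carry this out, I would compute $\FW^{-1}(\mathcal{W}_\phi \psi)$ weakly. Take $\xi \in \mathrm{Dom}(\D)$ and $\eta \in \mathcal{H}$, and interchange the pairing with $\eta$ and the integral. Using that $\pi$ is unitary, so that $\pi(x^{-1})^* = \pi(x)$, one gets
\begin{align*}
\big\langle \FW^{-1}(\mathcal{W}_\phi \psi)\xi,\, \eta\big\rangle_\mathcal{H} &= \int_G \mathcal{W}_\phi\psi(x)\, \overline{\langle \eta,\, \pi(x)^* \D\xi\rangle_\mathcal{H}} \dmr(x) \\
&= \big\langle \mathcal{W}_\phi \psi,\, \mathcal{W}_{\D\xi}\eta\big\rangle_{L^2_r}.
\end{align*}
Since $\xi \in \mathrm{Dom}(\D)$, the function $\D\xi$ lies in $\mathrm{Dom}(\D^{-1})$ with $\D^{-1}(\D\xi)=\xi$, so the Duflo-Moore orthogonality relation in Theorem \ref{theorem:duflo-moore} applies and yields
\begin{align*}
\big\langle \FW^{-1}(\mathcal{W}_\phi \psi)\xi,\, \eta \big\rangle_\mathcal{H} = \langle \psi,\, \eta\rangle_\mathcal{H}\, \overline{\langle \D^{-1}\phi,\, \xi\rangle_\mathcal{H}} = \langle \xi,\, \D^{-1}\phi\rangle_\mathcal{H} \langle \psi,\eta\rangle_\mathcal{H},
\end{align*}
which matches the pairing $\langle (\psi \otimes \D^{-1}\phi)\xi, \eta\rangle_\mathcal{H}$. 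Since this holds for $\eta$ in all of $\mathcal{H}$ and $\xi$ in the dense subspace $\mathrm{Dom}(\D)$, we conclude $\FW^{-1}(\FW(A)) = A$ for every rank-one $A = \psi \otimes \D^{-1}\phi$, and then by linearity on the whole dense subspace from Lemma \ref{lemma:dense_subspace_xd}.

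The main technical obstacle is that $\mathcal{W}_\phi \psi$ belongs to $L^2_r(G)$ but need not lie in $L^1_r(G)$, so the integral defining $\FW^{-1}$ is not \emph{a priori} absolutely convergent and the interchange with the $\eta$-pairing requires justification. I would handle this by viewing $\FW^{-1}$ on the range $\FW(\mathcal{S}^2) \subset L^2_r(G)$ as the isometric inverse of the isometry from Proposition \ref{prop:FW_isometry}, and interpreting the identity on $\mathcal{S}^2$ in this extended sense. The weak computation above is then rigorous either by restricting to admissible $\phi$ for which $\mathcal{W}_\phi\psi \in L^1_r \cap L^2_r$ (a dense subset sufficient for the extension), or by approximating $\mathcal{W}_\phi\psi$ in $L^1_r \cap L^2_r$ and passing the limit through the isometry $\FW^{-1}$. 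Once the identity holds on a dense subspace and both sides are bounded on $\mathcal{S}^2$ (the right-hand side trivially and the left-hand side because $\FW^{-1}\circ \FW$ is the composition of two isometries), it extends to all of $\mathcal{S}^2$.
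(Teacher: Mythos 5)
Your proposal is correct and follows essentially the same route as the paper: reduce to rank-one operators $\psi \otimes \D^{-1}\phi$ on the dense subspace from Lemma \ref{lemma:dense_subspace_xd}, apply Proposition \ref{prop:fourier_wigner_rank_one}, and invoke the Duflo--Moore orthogonality relation, extending by density and the isometry of Proposition \ref{prop:FW_isometry}. The only cosmetic difference is that you test $\FW^{-1}(\FW(A))$ against vectors $\xi, \eta \in \mathcal{H}$ directly, whereas the paper tests against a second operator $B$ in the $\mathcal{S}^2$ inner product before unwinding to the same vector-level pairing.
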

\begin{proof}
We will prove the proposition for $\mathcal{S}^1 \D^{-1}$ and conclude the full case by the density from Lemma \ref{lemma:dense_subspace_xd}. Let $A, B \in \mathcal{S}^1 \D^{-1}$ be arbitrary, then with $A = \sum_n a_n (\psi_n \otimes \D^{-1}\phi_n)$ and $B = \sum_n b_n (\xi_n \otimes \D^{-1} \eta_n)$, we have that
\begin{align}\label{eq:FWFW_inv_full}
    \big\langle \FW^{-1}(\FW(A)), B \big\rangle_{\mathcal{S}^2} &= \sum_n \sum_m a_n \overline{b_m} \big\langle \FW^{-1}(\mathcal{W}_{\phi_n}\psi_n), (\xi_m \otimes \D^{-1}\eta_m) \big\rangle_{\mathcal{S}^2}
\end{align}
by Proposition \ref{prop:fourier_wigner_rank_one}. We can now compute each term as
\begin{align*}
    \big\langle \FW^{-1}(\mathcal{W}_{\phi}\psi), (\xi \otimes \D^{-1}\eta) \big\rangle_{\mathcal{S}^2} &= \tr\big( \FW^{-1}(\mathcal{W}_\phi \psi) (\D^{-1}\eta \otimes \xi) \big)\\
    &= \sum_n \left\langle \FW^{-1}(\mathcal{W}_\phi \psi)\langle e_n, \xi \rangle \D^{-1}\eta, e_n  \right\rangle\\
    &= \left\langle \FW^{-1}(\mathcal{W}_\phi\psi) \D^{-1}\eta, \xi \right\rangle\\
    &= \int_G \langle \psi, \pi(x) \phi \rangle \langle \pi(x) \D \D^{-1} \eta, \xi \rangle\dml(x)\\
    &= \int_G \mathcal{W}_\phi \psi(x^{-1}) \overline{\mathcal{W}_\eta \xi(x^{-1})}\dml(x)\\
    &=\int_G \mathcal{W}_\phi \psi(x) \overline{\mathcal{W}_\eta \xi(x)}\dmr(x) = \langle \psi, \xi \rangle \overline{\langle \D^{-1}\phi, \D^{-1}\eta \rangle}
\end{align*}
using Theorem \ref{theorem:duflo-moore}. Plugging this result into each term of \eqref{eq:FWFW_inv_full}, we conclude that
\begin{align*}
    \big\langle \FW^{-1}(\FW(A)), B \big\rangle_{\mathcal{S}^2} = \sum_n \sum_m a_n \overline{b_m}\langle \psi_n, \eta_m \rangle \overline{\langle \D^{-1} \phi_n, \D^{-1}\eta_m \rangle} = \langle A, B \rangle_{\mathcal{S}^2}
\end{align*}
which implies that $\FW^{-1} \circ \FW$ is the identity on $\mathcal{S}^1\D^{-1}$.
\end{proof}

The following is as close as we are able to get to a Riemann-Lebesgue lemma for $\FW$ as decay at infinity is not always possible to define for a group.
\begin{proposition}\label{prop:fw_riemann_lebesgue}
    If $A \in \mathcal{S}^1\D^{-1}$, then $\FW(A)$ is continuous. Moreover, if for all $\varepsilon > 0$ and $\psi, \phi \in \mathcal{H}$ there exist sets $E(\varepsilon, \psi, \phi)$ such that
    $$
    x \in E(\varepsilon, \psi, \phi) \implies |\mathcal{W}_\phi \psi(x)| < \varepsilon,
    $$
    then given an $\varepsilon > 0$, there exists a set $E(\varepsilon, A)$ which is a finite intersection of sets of the form $E(\varepsilon', \psi, \phi)$ such that
    $$
    x \in E(\varepsilon, A) \implies |\FW(A)(x)| < \varepsilon.
    $$
\end{proposition}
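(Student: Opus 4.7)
The plan is to expand $A\D$ via its singular value decomposition, which reduces $\FW(A)$ to an absolutely summable series of wavelet transforms and lets us apply the hypothesis termwise. Since $A\D \in \mathcal{S}^1$, write the decomposition $A\D = \sum_n s_n (\psi_n \otimes \eta_n)$ with orthonormal systems $\{\psi_n\}, \{\eta_n\} \subset \mathcal{H}$ and $\sum_n s_n = \Vert A\D \Vert_{\mathcal{S}^1} < \infty$. Using continuity of the trace on $\mathcal{S}^1$ together with the cyclic identity $\tr((\psi \otimes \phi)\pi(x)) = \langle \pi(x)\psi, \phi \rangle$, I obtain the key identity
\begin{equation*}
    \FW(A)(x) = \sum_n s_n \,\mathcal{W}_{\eta_n}\psi_n(x),
\end{equation*}
together with the uniform bound $|\mathcal{W}_{\eta_n}\psi_n(x)| \leq \Vert \psi_n \Vert\, \Vert \eta_n \Vert = 1$.

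For continuity, each term $x \mapsto \mathcal{W}_{\eta_n}\psi_n(x) = \langle \pi(x)\psi_n, \eta_n \rangle$ is continuous because $\pi$ is a strongly continuous unitary representation, and the Weierstrass M-test then delivers uniform convergence of the series, so $\FW(A)$ is continuous.

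For the decay statement, fix $\varepsilon > 0$; we may assume $\Vert A\D \Vert_{\mathcal{S}^1} > 0$, else the claim is vacuous. Choose $N$ with $\sum_{n>N} s_n < \varepsilon/2$ and set $\varepsilon' = \varepsilon / (2\Vert A\D \Vert_{\mathcal{S}^1})$. By hypothesis, for each $n \leq N$ there exists a set $E(\varepsilon', \psi_n, \eta_n)$ on which $|\mathcal{W}_{\eta_n}\psi_n(x)| < \varepsilon'$. Define
\begin{equation*}
    E(\varepsilon, A) = \bigcap_{n=1}^N E(\varepsilon', \psi_n, \eta_n),
\end{equation*}
which is a finite intersection of sets of the required form. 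Splitting the series at $N$ on this set yields
\begin{equation*}
    |\FW(A)(x)| \leq \varepsilon' \sum_{n=1}^N s_n + \sum_{n > N} s_n < \varepsilon' \Vert A\D \Vert_{\mathcal{S}^1} + \varepsilon/2 = \varepsilon,
\end{equation*}
as required.

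The only nontrivial step is the interchange of trace and infinite sum in the key identity, which I expect to be the main (though mild) obstacle. It follows from the $\mathcal{S}^1$-convergence of the partial sums of the SVD to $A\D$ combined with the fact that $T \mapsto \tr(T\pi(x))$ is $1$-Lipschitz on $\mathcal{S}^1$, since $|\tr(T\pi(x))| \leq \Vert T \Vert_{\mathcal{S}^1}\Vert \pi(x) \Vert_{\mathrm{op}} = \Vert T \Vert_{\mathcal{S}^1}$. Once this is in place, the rest is a standard truncation argument, and no further difficulty is anticipated.
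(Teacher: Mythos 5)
Your proof is correct. The decay argument is essentially identical to the paper's: decompose $A\D$ into its singular value decomposition, use $|\mathcal{W}_{\eta_n}\psi_n(x)| \leq 1$ to control the tail, and take a finite intersection of the hypothesized sets for the head; your choice of $\varepsilon'$ and of $E(\varepsilon, A)$ matches the paper's (and your use of $\Vert A\D \Vert_{\mathcal{S}^1}$ rather than $\Vert A \Vert_{\mathcal{S}^1}$ is in fact the more careful normalization, since only $A\D$ is assumed trace class). Your justification of the trace/sum interchange via $|\tr(T\pi(x))| \leq \Vert T \Vert_{\mathcal{S}^1} \Vert \pi(x) \Vert_{\mathrm{op}}$ is exactly the right point to flag and is handled correctly.

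The one place where you genuinely diverge is the continuity claim, and your route is the more robust one. The paper estimates $|\FW(A)(x) - \FW(A)(x_0)| \leq \Vert A\D \Vert_{\mathcal{S}^1} \Vert \pi(x) - \pi(x_0) \Vert_{\mathcal{S}^\infty}$ and asserts that the last factor vanishes ``by the strong continuity of the representation.'' Strictly speaking, strong continuity does not give operator-norm continuity of $x \mapsto \pi(x)$; for infinite-dimensional irreducible representations one should not expect $\Vert \pi(x) - \pi(x_0) \Vert_{\mathcal{S}^\infty} \to 0$. Your argument --- each matrix coefficient $x \mapsto \langle \pi(x)\psi_n, \eta_n \rangle$ is continuous by strong (indeed weak) continuity, and the series converges uniformly by the M-test with majorant $\sum_n s_n$ --- needs only the weaker, actually available hypothesis, and so repairs this soft spot in the paper's proof. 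No gaps on your end.
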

\begin{proof}
    For continuity, let $x \to x_0$ and estimate
    \begin{align*}
        \big| \FW(A)(x) - \FW(A)(x_0) \big| &= \big|\tr\big(A \D \pi(x) - A \D \pi(x_0)\big)\big|\\
        &\leq \Vert A\D \Vert_{\mathcal{S}^1} \Vert \pi(x) - \pi(x_0) \Vert_{\mathcal{S}^\infty}
    \end{align*}
    using $\Vert AB \Vert_{\mathcal{S}^1} \leq \Vert A \Vert_{\mathcal{S}^1} \Vert B \Vert_{\mathcal{S}^\infty}$. The last factor goes to zero by the strong continuity of the representation.

    For the decay, fix $\varepsilon > 0$ and decompose $A\D$ as $A\D = \sum_{n} a_n (\psi_n \otimes \phi_n)$ where $(\psi_n)_n$ and $(\phi_n)_n$ are orthonormal sequences. Since $(a_n)_n \in \ell^1$, we can find an integer $N > 0$ such that $\sum_{n=N+1}^\infty |a_n| < \varepsilon/2$ and therefore,
    \begin{align*}
        \big| \FW(A)(x) \big| \leq \sum_{n = 1}^N |a_n| \big| \mathcal{W}_{\phi_n} \psi_n(x)\big| + \underbrace{\sum_{n=N+1}^\infty |a_n| \big| \mathcal{W}_{\phi_n}\psi_n(x)\big|}_{< \varepsilon/2}
    \end{align*}
    since $|\mathcal{W}_{\phi_n} \psi_n(x)| \leq 1$ by Cauchy-Schwarz. Now let $E(\varepsilon, A) = \bigcap_{n=1}^N E\Big(\frac{\varepsilon}{2 \Vert A \Vert_{\mathcal{S}^1}}, \psi_n, \phi_n\Big)$ and the result follows.
\end{proof}

\begin{remark}
    The decay condition on the wavelet transform $\mathcal{W}_\phi \psi$ is rather natural. In the time-frequency literature, one often places assumptions on the short-time Fourier transform of the type
    \begin{align*}
        |V_\phi \psi(x)| \leq C(1+|x|)^{-s}
    \end{align*}
    for some positive $s$ dependent on the dimension of the ambient space.
\end{remark}

In computations involving representations on semi-direct products, we can compute the Fourier-Wigner transform using the integral kernel of an operator. Denote by $R=H/G$. Recall that if $A\in \mathcal{S}^2(L^2_r(R))$ then the integral kernel $K_A\in L^2(R\times R)$ is defined by \[A\phi=\int_{R}K_A(\cdot,t)\phi(t)\mathrm{d}\mu^{R}(t).\]
\begin{proposition}\label{semi:simple}
Assume that $G=R\rtimes_\phi H$ and $A=\psi_1\otimes\psi_2\in \mathcal{S}^2(L^2_r(R))$, where $\psi_2\in \mathrm{Dom}( \D)$. 
    Then the kernel of $A\D \pi((r,h))$ is given by 
    \[\psi_1(s)\overline{\psi_2(tr^{-1})}\frac{\sqrt{|\det_H(\phi_*(tr^{-1}))|}}{\sqrt{\Delta_R(tr^{-1})}}e^{-2\pi i F\log_H(\phi(tr^{-1})h)}.\] Taking the trace gives 
    \[\mathcal{F}_W(\psi_1\otimes\psi_2)(r,h)=\int_R  \psi_1(t)\overline{\psi_2(tr^{-1})}e^{-2\pi i F\log_H(\phi(tr^{-1})h)}\frac{\sqrt{|\det_H(\phi_*(tr^{-1}))|}}{\sqrt{\Delta_R(tr^{-1})}}\mathrm{d}\mu^{R}(t).\]
\end{proposition}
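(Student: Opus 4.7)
The plan is to compute the action of $A\D\pi((r,h))$ on a test function, read off the integral kernel, and then take the trace as the diagonal integral of the kernel. The hypothesis $\psi_2 \in \mathrm{Dom}(\D)$ makes $A\D = (\psi_1 \otimes \psi_2)\D = \psi_1 \otimes \D\psi_2$ a bounded rank-one operator (using that $\D$ is self-adjoint to move it past the tensor), so $A\D\pi((r,h))$ is of trace class and the trace/kernel formulas below are justified.

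First, for any $\xi \in L^2_r(R)$, we have
\[
    A\D\pi((r,h))\xi = \bigl\langle \pi((r,h))\xi,\, \D\psi_2 \bigr\rangle\, \psi_1.
\]
Expanding the inner product via the induced representation formula \eqref{semi:representation},
\[
    \bigl\langle \pi((r,h))\xi, \D\psi_2 \bigr\rangle = \int_R e^{-2\pi i F \log_H(\phi(t)h)}\, \xi(tr)\, \overline{(\D\psi_2)(t)}\,\mathrm{d}\mu^R(t).
\]
I would then change variables $u = tr$, which is legitimate since $\mathrm{d}\mu^R$ is right-invariant and thus $t = ur^{-1}$ leaves the measure unchanged, to reach
\[
    \int_R e^{-2\pi i F \log_H(\phi(ur^{-1})h)}\, \overline{(\D\psi_2)(ur^{-1})}\, \xi(u)\,\mathrm{d}\mu^R(u).
\]

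Second, the explicit form of the Duflo--Moore operator for a semi-direct product recalled in Section \ref{sec:prelim_rep_theory_exp_lie} gives
\[
    (\D\psi_2)(a) = \frac{\sqrt{|\det_H(\phi_*(a))|}}{\sqrt{\Delta_R(a)}}\,\psi_2(a),
\]
and substituting $a = ur^{-1}$ allows me to identify the kernel of $A\D\pi((r,h))$ as
\[
    K(s,u) = \psi_1(s)\,\overline{\psi_2(ur^{-1})}\,\frac{\sqrt{|\det_H(\phi_*(ur^{-1}))|}}{\sqrt{\Delta_R(ur^{-1})}}\,e^{-2\pi i F \log_H(\phi(ur^{-1})h)},
\]
which is precisely the first claim after renaming $u$ to $t$.

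Third, since $A\D\pi((r,h))$ is trace class with continuous kernel, Mercer-type identification gives
\[
    \FW(\psi_1 \otimes \psi_2)(r,h) = \tr\bigl(A\D\pi((r,h))\bigr) = \int_R K(t,t)\,\mathrm{d}\mu^R(t),
\]
and inserting the kernel obtained above yields the second displayed formula in the statement. The only substantive bookkeeping step is the change of variables $u = tr$ and the correct invocation of the Duflo--Moore formula for semi-direct products with unimodular fiber $H$; otherwise the argument is a direct unfolding of definitions.
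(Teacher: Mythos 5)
Your proposal is correct and follows essentially the same route as the paper: both compute the action of $A\D\pi((r,h))$ on a test function via the induced representation formula \eqref{semi:representation} and the multiplication-operator form of $\D$ for semi-direct products, read off the rank-one kernel after the change of variables $u=tr$, and obtain the trace as the diagonal integral. The only cosmetic difference is that you move $\D$ onto $\psi_2$ by self-adjointness while the paper applies it to $\pi((r,h))\xi$; since $\D$ acts here as multiplication by a real-valued function, the integrands are identical.
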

\begin{proof}
    Using \eqref{semi:representation} we have
\begin{align*}
   (\D\pi(r,h)\xi)(t)&= \frac{\sqrt{|\det_H(\phi_*(t))|}}{\sqrt{\Delta_R(t)}}e^{-2\pi i F\log_H(\phi(t)h)}\xi(tr).
\end{align*}
    The definition of the Fourier-Wigner transform now implies that
    \begin{align*}
        \FW(\psi_1\otimes \psi_2)(r)&=\int_R\psi_1(s)\overline{\psi_2(tr^{-1})}\frac{\sqrt{|\det_H(\phi_*(tr^{-1}))|}}{\sqrt{\Delta_R(tr^{-1})}}e^{-2\pi i F\log_H(\phi(tr^{-1})h)}\xi(t) \dm^R(t)\\
    \end{align*}
    Hence the kernel is 
    \[\psi_1(s)\overline{\psi_2(tr^{-1})}\frac{\sqrt{|\det_H(\phi_*(tr^{-1}))|}}{\sqrt{\Delta(tr^{-1})}}e^{-2\pi i F\log_H(\phi(t^{-1}rtr^{-1})h)}.\]
    Using that the trace of a rank-one operator $\psi_1\otimes\psi_2$ is given by 
    \begin{align*}
        \tr(\psi_1\otimes\psi_2)=\int_R\psi_1(t)\overline{\psi_2(t)} \dm^R(t)
    \end{align*}
    gives the result.
\end{proof}
Lastly we collect some fundamental results on how the inverse Fourier-Wigner transform interacts with translations, adjoints and convolutions.
\begin{lemma}\label{lemma:wigner_rep_interaction}
    Let $f,g \in L^1_r(G)$ and $x \in G$, then
    \begin{enumerate}[label=(\roman*)]
        \item \label{item:wigner_rep_interaction}$\FW^{-1}(f)\pi(x) = \FW^{-1}\left( \frac{1}{\sqrt{\Delta(x)}} L_{x^{-1}} f \right)$,
        \item \label{item:rep_wigner_interaction}$\pi(x) \FW^{-1}(f) = \FW^{-1}(R_x f)$,
    \end{enumerate}
\end{lemma}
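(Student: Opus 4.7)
The plan is to prove both identities by direct computation starting from the definition $\FW^{-1}(f)(\phi) = \int_G f(y)\pi(y^{-1})\D\phi\dmr(y)$, applying the relevant translation operator, and making an appropriate change of variables in the right Haar measure. Since the representation $\pi$ is unitary (hence bounded), we may freely move it inside and outside the integral.

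For part \ref{item:wigner_rep_interaction}, the first step is to apply $\pi(x)$ on the right to an admissible $\phi$ and push it past $\D$ using the identity $\D\pi(x) = \sqrt{\Delta(x)}\,\pi(x)\D$ from Lemma \ref{lemma:haar_measure_properties}\,(iv). This pulls out a factor of $\sqrt{\Delta(x)}$ and turns $\pi(y^{-1})\D\pi(x)\phi$ into $\sqrt{\Delta(x)}\,\pi(y^{-1}x)\D\phi$. Next I would substitute $z = x^{-1}y$ so that $y^{-1}x = z^{-1}$. The substitution rule I need is that for the right Haar measure, $\int_G h(xz)\dmr(z) = \Delta(x^{-1})\int_G h(y)\dmr(y)$, which one can verify quickly by writing $\dmr = \Delta^{-1}\dml$ and using left invariance of $\dml$ together with the multiplicativity of $\Delta$. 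Applying this substitution yields a factor $\Delta(x)^{-1}$, and combined with the $\sqrt{\Delta(x)}$ already outside the integral one arrives at the coefficient $\frac{1}{\sqrt{\Delta(x)}}$. Since $(L_{x^{-1}}f)(z) = f(xz)$, the integrand matches $\FW^{-1}\bigl(\frac{1}{\sqrt{\Delta(x)}}L_{x^{-1}}f\bigr)(\phi)$.

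Part \ref{item:rep_wigner_interaction} is a bit simpler: applying $\pi(x)$ on the left and commuting it into the integral gives $\pi(x)\pi(y^{-1})\D\phi = \pi(xy^{-1})\D\phi$. Here I would substitute $z = yx^{-1}$, so that $xy^{-1} = z^{-1}$. This time the substitution is a right translation of the integration variable, and the right Haar measure is by definition invariant under such translations, so no Jacobian appears. Since $(R_x f)(z) = f(zx)$, the integral becomes exactly $\FW^{-1}(R_x f)(\phi)$.

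The only subtlety, and the step I would be most careful about, is the correct Jacobian factor when substituting under the right Haar measure in part \ref{item:wigner_rep_interaction}; getting the direction of the modular function wrong would produce a $\sqrt{\Delta(x)}$ instead of the $1/\sqrt{\Delta(x)}$ stated. I would therefore verify the identity $\dmr(xz) = \Delta(x)^{-1}\dmr(z)$ explicitly using $\dml = \Delta\,\dmr$ before using it. Once this is checked, both identities follow by straightforward manipulation, and density of admissible $\phi$ in $\mathcal{H}$ together with boundedness of the operators involved extends the identities to all of $\mathcal{H}$.
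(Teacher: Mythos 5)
Your proof is correct and takes essentially the same route as the paper's: the paper writes $\FW^{-1}(f)=\int_G \check{f}(y)\pi(y)\D\dml(y)$ and substitutes $z=yx$ under the left Haar measure, whereas you work with the equivalent right-Haar form and substitute $z=x^{-1}y$; these are the same computation in different coordinates, and your treatment of (ii) matches the paper's as well. One internal inconsistency to repair in (i): the integral identity you display, $\int_G h(xz)\dmr(z)=\Delta(x^{-1})\int_G h(y)\dmr(y)$, has the constant on the wrong side --- left-translating the argument of the integrand multiplies a right-Haar integral by $\Delta(x)$, which is equivalent to the measure identity $\dmr(xz)=\Delta(x)^{-1}\dmr(z)$ that you correctly state at the end and actually use; the Jacobian factor $\Delta(x)^{-1}$ you extract and the resulting coefficient $1/\sqrt{\Delta(x)}$ are therefore right, but the displayed integral form as written contradicts them.
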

\begin{proof}
    Properties \ref{item:wigner_rep_interaction} and \ref{item:rep_wigner_interaction} are essentially reformulations of \cite[Thm.~3.9]{folland2016course}. For \ref{item:wigner_rep_interaction}, we can compute
    \begin{align*}
        \FW^{-1}(f) \pi(x) &= \int_G \check{f}(y) \pi(y) \D\pi(x)\dml(y) \\
        &= \int_G \check{f}(y) \pi(y x) \sqrt{\Delta(x)} \D\dml(y) && \Big(\D \pi(x) = \sqrt{\Delta(x)}\pi(x) \D\Big)\\
        &= \int_G \check{f}(zx^{-1}) \pi(z) \sqrt{\Delta(x)} \D\dml(zx^{-1}) && \Big(z = yx \implies y = zx^{-1}\Big)\\
        &= \int_G \check{f}(zx^{-1}) \pi(z) \frac{1}{\sqrt{\Delta(x)}} \D\dml(z) && \Big(d\mu_l(zx^{-1}) = \Delta(x^{-1}) d\mu_l(z)\Big)\\
        &= \int_G \widecheck{L_x f}(z) \pi(z) \frac{1}{\sqrt{\Delta(x)}} \D \dml(z) && \Big( \check{f}(zx^{-1}) = \widecheck{L_x f}(z) \Big)\\
        &= \FW^{-1}\left(\frac{1}{\sqrt{\Delta(x)}} L_{x^{-1}} f\right).
    \end{align*}
    To see that $\check{f}(zx^{-1}) = \widecheck{L_{x^{-1}} f}(z)$, note that $\check{f}(zx^{-1}) = f(xz^{-1})=(L_{x^{-1}}f)(z^{-1})$.

    Meanwhile for \ref{item:rep_wigner_interaction}, we have that
    \begin{align*}
        \pi(x) \FW^{-1}(f) &= \int_G \check{f}(y) \pi(xy) \D\dml(y)\\
        &= \int_G \check{f}(x^{-1}z) \pi(z) \D\dml(x^{-1}z) &&\Big(z = xy \implies y = x^{-1}z \Big)\\
        &= \int_G \widecheck{R_x f}(z) \pi(z) \D \dml(x^{-1}z) &&\Big( \check{f}(x^{-1}z) = \widecheck{R_x f}(z) \Big)\\
        &= \int_G \widecheck{R_x f}(z) \pi(z) \D \dml(z) &&\Big( d\mu_l(yz) = d\mu_l(z) \Big)\\
        &= \FW^{-1}(R_x f). \qedhere
    \end{align*}
\end{proof}

\subsection{Fourier-Kirillov transform}\label{sec:FKO}
We will now construct a function Fourier transform. Notice in Section \ref{sec:orbit} the symplectic form is defined via group action. Hence map $\kappa $ induces, ut to a constant, an isomorphism from $L^2_r(G)$ to $L^2(\mathcal{O}_F, \mathrm{d}\omega)$ by mapping $f\mapsto f\circ\kappa^{-1}$. Since the right Haar measure is unique up to a constant, we will choose the right Haar measure of the group $G$ that makes this map into an isomorphism.
\begin{lemma}\label{lem:symplectic form}
    Let $X_1,\ldots,X_{2n}$ be a basis for $\mathfrak{g}$, and $\mathrm{d}Y_1,\ldots, \mathrm{d}Y_{2n}$ be the dual basis. Set $\mathrm{Pf}_F$ to be the Pfaffian \[\mathrm{Pf}_F=\mathrm{Pf}(F([X_i ,X_j]))=\frac{1}{2^{n}n!}\sum_{\sigma\in \mathrm{S}_{2n}}\mathrm{sgn}(\sigma)\prod_{j=1}^n F([X_{\sigma(2j-1)},X_{\sigma(2j)}]),\]
    where $\mathrm{S}_{2n}$ is the symmetric group. Then 
    \[\mathrm{Pf}_F\cdot \Delta(\kappa^{-1}(Y)) \mathrm{d} Y_1\wedge\cdots\wedge \mathrm{d} Y_{2n}=\mathrm{Pf}_F\cdot \Delta(\kappa^{-1}(Y))\,\mathrm{d}Y = \mathrm{d}\omega(Y)=\frac{\omega^n(Y)}{n!}.\]
\end{lemma}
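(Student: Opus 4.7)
The strategy is to compute the Liouville volume form $\omega^n(Y)/n!$ pointwise in the Euclidean coordinates $Y_1,\ldots,Y_{2n}$ on $\mathfrak{g}^*$, first at the base point $F$ and then transporting to a generic orbit point $Y = \kappa(x)$ via the $G$-equivariance of the Kirillov–Kostant–Souriau form.

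First I would work at the base point $F$. The assumption $\SF = \{e\}$ guarantees that the map $K_*(\cdot)F \colon \mathfrak{g} \to T_F\mathcal{O}_F$ is a linear isomorphism, so the vectors $v_j := K_*(X_j)F$ form a basis of $T_F\mathcal{O}_F$. In this basis the defining relation $\omega_F(K_*(X)F,K_*(Y)F) = \langle F,[X,Y]\rangle$ reads off directly as saying that the Gram matrix of $\omega_F$ is exactly $M_F := (F([X_i,X_j]))_{i,j}$, whose Pfaffian is $\mathrm{Pf}_F$ by definition. The standard identity expressing the top exterior power of a $2$-form via the Pfaffian of its matrix immediately yields
\[\omega_F^n/n! = \mathrm{Pf}_F\, v^1 \wedge \cdots \wedge v^{2n},\]
with $v^1,\ldots,v^{2n}$ the dual basis.

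Next I would convert $v^1\wedge\cdots\wedge v^{2n}$ into the Euclidean top form $dY_1\wedge\cdots\wedge dY_{2n}$. Identifying $T_F\mathfrak{g}^* \cong \mathfrak{g}^*$, the tangent vector $v_j = K_*(X_j)F$ corresponds to the functional $Z \mapsto -F([X_j,Z])$, whose coordinates in $\{X_i^*\}$ are precisely the $j$-th column of $M_F$. Consequently the change-of-basis matrix from $(\partial_{Y_i})$ to $(v_j)$ is $M_F$ itself, and $\det(M_F) = \mathrm{Pf}_F^2$ by the classical determinant–Pfaffian identity; this lets one pass between the two top forms with the expected power of $\mathrm{Pf}_F$, giving the formula at $F$.

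Finally, to move from $F$ to a generic $Y = \kappa(x) = K(x^{-1})F$, I would invoke the $G$-invariance of the KKS form under the coadjoint action. The map $K(x^{-1})\colon \mathfrak{g}^* \to \mathfrak{g}^*$ is linear and equal (in dual bases) to the transpose of $\mathrm{Ad}_x$, so its Jacobian determinant in the $Y$-coordinates is $\det(\mathrm{Ad}_x) = \Delta(x)$ by equation \eqref{eq:modular_function_adjoint}. Pulling back the Euclidean top form along this map therefore multiplies its coefficient by exactly $\Delta(x) = \Delta(\kappa^{-1}(Y))$, and combining with the previous step produces the claim.

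The main obstacle is careful bookkeeping of the constants. The key algebraic tool is the Pfaffian transformation rule $\mathrm{Pf}(A^T M_F A) = \det(A)\,\mathrm{Pf}_F$ applied to $A = \mathrm{Ad}_x$, which gives $M_Y = \mathrm{Ad}_x^T M_F \mathrm{Ad}_x$; this must interlock precisely with the basis change at $F$ and with the Jacobian of the coadjoint action so that the final coefficient is exactly $\mathrm{Pf}_F \cdot \Delta(\kappa^{-1}(Y))$ rather than a variant such as $\Delta(\kappa^{-1}(Y))/\mathrm{Pf}_F$.
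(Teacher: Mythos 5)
Your overall architecture is close to the paper's: the second half of your argument (transporting from $F$ to $Y=\kappa(x)$ via coadjoint equivariance, the Pfaffian rule $\mathrm{Pf}(B^TMB)=\det(B)\mathrm{Pf}(M)$ applied with $B=\Ad_{\kappa^{-1}(Y)}$, and $\det(\Ad_x)=\Delta(x)$ from \eqref{eq:modular_function_adjoint}) is exactly the computation the paper performs. The difference is in the first half: the paper does \emph{not} derive the pointwise formula $\mathrm{d}\omega(Y)=\mathrm{Pf}\big(Y([X_i,X_j])\big)\,\mathrm{d}Y$ at all --- it cites it from \cite[Lem.~4.1.3]{ArnalDidier2020RoSL} and then only has to show $\mathrm{Pf}\big(Y([X_i,X_j])\big)=\Delta(\kappa^{-1}(Y))\,\mathrm{Pf}_F$. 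You instead attempt to derive the base-point normalization from scratch, and that is where your proposal has a genuine gap.

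Concretely: your own two intermediate claims, carried out honestly, do not combine to the stated constant. You correctly get $\omega_F^n/n!=\mathrm{Pf}_F\, v^1\wedge\cdots\wedge v^{2n}$ in the basis dual to $v_j=K_*(X_j)F$, and you correctly identify the change-of-basis matrix from $(\partial_{Y_i})$ to $(v_j)$ as $M_F=(F([X_i,X_j]))$. But then the dual top form transforms by the \emph{inverse} determinant, $v^1\wedge\cdots\wedge v^{2n}=\det(M_F)^{-1}\,\mathrm{d}Y=\mathrm{Pf}_F^{-2}\,\mathrm{d}Y$, so your route yields $\omega_F^n/n!=\mathrm{Pf}_F^{-1}\,\mathrm{d}Y$ at $Y=F$, i.e.\ the reciprocal of what the lemma asserts there (the two agree only when $|\mathrm{Pf}_F|=1$). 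A similar direction issue arises in your last step: a $G$-invariant volume form expressed against Lebesgue has its density \emph{divided} by the Jacobian $\det(K(x^{-1}))=\Delta(x)$ when moved from $F$ to $K(x^{-1})F$, not multiplied. You flag exactly this bookkeeping as ``the main obstacle'' and then assert, without verification, that the constants ``interlock precisely'' to give $\mathrm{Pf}_F\cdot\Delta$ rather than a reciprocal variant --- but that assertion is the entire content of the step, and the computation you sketch produces the reciprocal. To repair the proposal you would either need to reconcile your normalization with the convention for $\mathrm{d}\omega$ used in \cite[Lem.~4.1.3]{ArnalDidier2020RoSL} (e.g.\ by working with the form $\beta_F(X,Y)=\langle F,[X,Y]\rangle$ on $\mathfrak{g}/\sF$ and the coordinates there, rather than with the Euclidean coordinates on $\mathfrak{g}^*$), or simply cite that lemma as the paper does and restrict yourself to the equivariance computation.
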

\begin{proof}
    By the definition of the symplectic form, see \cite[Lem.~4.1.3]{ArnalDidier2020RoSL}, we have that 
    \[\mathrm{Pf}(Y[X_{\sigma(2j-1)},X_{\sigma(2i)}]) \mathrm{d}Y_1\wedge\cdots\wedge \mathrm{d}Y_{2n} = \mathrm{d}\omega(Y).\]
    By the definition of $\kappa$, we can write 
    \[Y=K(\kappa^{-1}(Y)^{-1})F,\]
    hence
    \[\mathrm{Pf}(Y[X_{\sigma(2i-1)},X_{\sigma(2i)}])\,\mathrm{d}Y=\mathrm{Pf}(F([\Ad_{\kappa^{{-1}}(Y)}X_{\sigma(2i-1)},\Ad_{\kappa^{-1}(Y)}X_{\sigma(2i)}]))\,\mathrm{d}Y.\]
    The Pfaffian, see e.g. \cite{Pfaffian}, has the property that  \[\mathrm{Pf}(BAB^*)=\det(B)\mathrm{Pf}(A).\]
    Using this together with \eqref{eq:modular_function_adjoint}
    we get that 
     \[\mathrm{Pf}_F\cdot\Delta(\kappa^{-1}(Y)) \mathrm{d}Y_1\wedge\cdots\wedge \mathrm{d}Y_{2n}=\mathrm{d}\omega(Y).\qedhere\]
\end{proof}

Define the \textit{Fourier-Kirillov transform} to be a mapping $\FKO:L^2_r(G)\to L^2_r(G)$ by 
\begin{align}\label{eq:FKO_def}
    \FKO(f)(x)&=\frac{1}{\sqrt{|\mathrm{Pf}_F|\cdot\Delta(x)}}\int_{G}f(y)e^{2\pi i\langle \kappa(x), \log(y)\rangle}\frac{1}{\sqrt{\Theta(\log(y))}}\dmr(y)\\\nonumber
    &= \frac{1}{\sqrt{|\mathrm{Pf}_F|\cdot\Delta(x)}}\int_{\mathfrak{g}}f(\exp(X))e^{2\pi i\langle\kappa(x), X\rangle}\sqrt{\Theta(X)}\,\mathrm{d}X.
\end{align}
The \textit{inverse Fourier-Kirillov transform} $\FKO^{-1}:L^2_r(G)\to L^2_r(G)$ is given by 
\begin{align}\nonumber
    \FKO^{-1}(f)(x)&=\sqrt{\Theta(\log(x))}\int_{\mathfrak{g}^*}\chi_{\mathcal{O}_F}(Y)f(\kappa^{-1}(Y))e^{-2\pi i\langle Y, \log(x)\rangle}\sqrt{|\mathrm{Pf}_F|\cdot\Delta(\kappa^{-1}(Y))}\, \mathrm{d}Y\\\label{eq:inverse_FKO}
    &=\sqrt{\Theta(\log(x))}\int_{G}f(y)e^{-2\pi i\langle \kappa(y), \log(x)\rangle}\frac{1}{ \sqrt{|\mathrm{Pf}_F|\cdot\Delta(y)}} \dmr(y).
\end{align}
\begin{proposition}\label{prop:FKO_unitary}
    The Fourier-Kirillov and inverse Fourier-Kirillov transforms are well-defined and have the property that 
    \begin{align*}
        \FKO \FKO^{-1}=\mathrm{id}.
    \end{align*}
    Moreover, $\FKO^{-1}$ is unitary, i.e.,
    \begin{align*}
        \big\langle \FKO^{-1}(f), \FKO^{-1}(g)\big\rangle_{L^2_r}=\langle f, g\rangle_{L^2_r},
    \end{align*}
    and $\FKO^{-1}\FKO$ is a projection onto $\FKO^{-1}(L^2_r(G))$.
\end{proposition}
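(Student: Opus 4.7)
The plan is to factor both $\FKO$ and $\FKO^{-1}$ through the ordinary Euclidean Fourier transform $\mathcal{F}:L^2(\mathfrak{g})\to L^2(\mathfrak{g}^*)$, which will reduce every assertion in the proposition to Plancherel for $\mathcal{F}$.

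First I would introduce two auxiliary unitaries absorbing the Jacobian prefactors. Let $T:L^2_r(G)\to L^2(\mathfrak{g})$ be defined by $(Tf)(X)=\sqrt{\Theta(X)}\,f(\exp X)$; this is unitary because $\mathrm{d}\mu_r(\exp X)=\Theta(X)\,\mathrm{d}X$. Let $S:L^2_r(G)\to L^2(\mathcal{O}_F)$ be defined by $(Sf)(Y)=\sqrt{|\mathrm{Pf}_F|\,\Delta(\kappa^{-1}(Y))}\,f(\kappa^{-1}(Y))$; this is unitary by Lemma \ref{lem:symplectic form} together with the normalization of $\mu_r$ that makes $f\mapsto f\circ\kappa^{-1}$ an isometry from $L^2_r(G)$ to $L^2(\mathcal{O}_F,\mathrm{d}\omega)$. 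Because $\SF=\{e\}$ forces $\dim\mathcal{O}_F=\dim G$, the orbit is an open subset of $\mathfrak{g}^*$, so $L^2(\mathcal{O}_F)$ embeds as a closed subspace of $L^2(\mathfrak{g}^*)$; let $P=M_{\chi_{\mathcal{O}_F}}$ denote the orthogonal projection onto it. Making the substitutions $y=\exp X$ in \eqref{eq:FKO_def} and $x=\kappa^{-1}(Y)$ in \eqref{eq:inverse_FKO}, the prefactors outside the integrals cancel against the Jacobians of the substitutions, yielding the clean factorizations
\begin{align*}
    S\circ\FKO=P\circ\mathcal{F}^{-1}\circ T,\qquad T\circ\FKO^{-1}=\mathcal{F}\circ S.
\end{align*}

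The four claims then follow by a short diagram chase. The second identity exhibits $\FKO^{-1}=T^{-1}\circ\mathcal{F}\circ S$ as a composition of unitaries, so $\FKO^{-1}$ is unitary. Composing the two identities,
\[\FKO\circ\FKO^{-1}=S^{-1}\circ P\circ\mathcal{F}^{-1}\circ T\circ T^{-1}\circ\mathcal{F}\circ S=S^{-1}\circ P\circ S=\mathrm{id},\]
with the last equality holding because $S$ takes values in the range of $P$. Similarly $\FKO^{-1}\circ\FKO=T^{-1}\circ\mathcal{F}\circ P\circ\mathcal{F}^{-1}\circ T$ is unitarily conjugate to the projection $P$ and is therefore itself the orthogonal projection onto $T^{-1}\mathcal{F}(L^2(\mathcal{O}_F))=\FKO^{-1}(L^2_r(G))$. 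Well-definedness on all of $L^2_r(G)$ is handled by the same factorizations, which extend the integral formulas from the dense subspace on which they converge absolutely to bounded operators in the standard Plancherel fashion.

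The main obstacle is the purely mechanical bookkeeping of the half-powers of $\Theta$, $\Delta$, and $|\mathrm{Pf}_F|$ through both substitutions so that the two factorizations come out exactly as stated; a misplaced half-power would still give a bounded operator but not a unitary. Once this is in order, the rest is formal.
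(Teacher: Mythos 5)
Your proposal is correct and is essentially the paper's own argument: the paper likewise factors $\FKO$ and $\FKO^{-1}$ through the Euclidean Fourier transform $\mathcal{F}:L^2(\mathfrak{g})\to L^2(\mathfrak{g}^*)$ using the unitaries $\mathrm{Mul}_{\sqrt{\Theta}}$ and $\mathrm{Mul}_{1/\sqrt{|\mathrm{Pf}_F|\Delta}}$ (your $T$ and, combined with the pullback along $\kappa$, your $S$) together with the restriction/inclusion pair whose composite is your projection $P$, and then deduces all four claims from Plancherel and the relations $\mathrm{Res}\circ\mathrm{Inj}=\mathrm{id}$, $\mathrm{Inj}\circ\mathrm{Res}=P$. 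The only cosmetic difference is your interchange of $\mathcal{F}$ and $\mathcal{F}^{-1}$ relative to the paper's sign convention, which does not affect the argument.
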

\begin{proof}
Let us start by showing well-definedness of the transforms.
It is clear that the following maps are surjective isomorphisms:
\begin{itemize}
    \item $\mathrm{Mul}_{\sqrt{\Theta}}:L^2_r(G)\to L^2(\mathfrak{g},\mathrm{d}X)$ 
defined by \[\mathrm{Mul}_{\sqrt{\Theta}}(f)(X)=\sqrt{\Theta(X)}f(e^X).\]
    \item $\mathcal{F}:L^2(\mathfrak{g},\mathrm{d}X)\to L^2(\mathfrak{g}^*,\mathrm{d}Y)$ 
defined by \[\mathcal{F}(f)=\int_\mathfrak{g}f(X)e^{2\pi i \langle X, Y\rangle}\,\mathrm{d}X.\]
\item $\mathrm{Mul}_{1/\sqrt{|\mathrm{Pf}_F|\Delta}}:L^2(\mathcal{O}_F, \mathrm{d}Y)\to  L^2(\mathcal{O}_F, \mathrm{d}\omega)$ 
defined by $\mathrm{Mul}_{1/\sqrt{|\mathrm{Pf}_F|\Delta}}(f)(Y)=\frac{f(Y)}{\sqrt{|\mathrm{Pf}_F|\cdot\Delta(\kappa^{-1}(Y))}}$. See Lemma \ref{lem:symplectic form}.
\end{itemize}
Additionally, we have the restriction operator $\mathrm{Res}_{\mathcal{O}_F}:L^2(\mathfrak{g}^*,\mathrm{d}Y)\to L^2(\mathcal{O}_F,\mathrm{d}Y)$ defined by \[\mathrm{Res}_{\mathcal{O}_F}(f)(Y)=\left.f\right|_{\mathcal{O}_F}(Y)\]
and the inclusion  operator $\mathrm{Inj}_{\mathcal{O}_F}:L^2(\mathcal{O}_F,\mathrm{d}Y)\to L^2(\mathfrak{g}^*,\mathrm{d}Y)$ defined by \[\mathrm{Inj}_{\mathcal{O}_F}(f)(Y)=\begin{cases}f(Y)&Y\in \mathcal{O}_F,\\
0&\text{otherwise}.\end{cases}\]
Now we can write 
\[(\FKO f)(x)=\mathrm{Mul}_{1/\sqrt{|\mathrm{Pf}_F|\Delta}}\circ\mathrm{Res}_{\mathcal{O}_F}\circ\mathcal{F}\circ\mathrm{Mul}_{\sqrt{\Theta}}(f)(\kappa(x))\]
and 
\[(\FKO^{-1}f)(x)=\mathrm{Mul}_{\sqrt{\Theta}}^{-1}\circ\mathcal{F}^{-1}\circ\mathrm{Inj}_{\mathcal{O}_F}\circ\mathrm{Mul}_{1/\sqrt{|\mathrm{Pf}_F|\Delta}}^{-1}(f)(x).\]
Notice that $\mathrm{Inj}_{\mathcal{O}_F}\circ\mathrm{Res}_{\mathcal{O}_F}$ is a projection and $\mathrm{Res}_{\mathcal{O}_F}\circ\mathrm{Inj}_{\mathcal{O}_F}=\mathrm{id}$ which implies that $\FKO^{-1} \FKO$ is a projection and $\FKO \FKO^{-1} = \operatorname{id}$.

That the operator $\mathcal{F}^{-1}_W$ is unitary follows from the computation 
\begin{align*}
    \big\langle \FKO^{-1}(f), \FKO^{-1}(g)\big\rangle_{L^2_r}
    &=\big\langle \mathrm{Inj}_{\mathcal{O}_F}(\mathrm{Mul}_{1/\sqrt{|\mathrm{Pf}_F|\Delta}}^{-1}(f)), \mathrm{Inj}_{\mathcal{O}_F}(\mathrm{Mul}_{1/\sqrt{|\mathrm{Pf}_F|\Delta}}^{-1}(g))\big\rangle_{L^2(\mathfrak{g}^*, \mathrm{d}Y)}\\
    &=\big\langle \mathrm{Mul}_{1/\sqrt{|\mathrm{Pf}_F|\Delta}}^{-1}(f), \mathrm{Mul}_{1/\sqrt{|\mathrm{Pf}_F|\Delta}}^{-1}(g)\big\rangle_{L^2(\mathcal{O}_F, \mathrm{d}Y)}\\
    &=\langle f, g\rangle_{L^2_r},
\end{align*}
where we have used that the inclusion operator is an injective isomorphism.
\end{proof}

\begin{lemma}\label{lemma:kirillov_complex_conjugate_final}
    Let $f\in L^2_r(G)$, then $\mathcal{F}_{\mathrm{KO}}(\sqrt{\Delta_G(\cdot)}\check{f}(\cdot))=\overline{\mathcal{F}_{\mathrm{KO}}(\overline{f(\cdot)})}$.
\end{lemma}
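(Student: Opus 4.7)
The plan is to verify the identity by computing both sides directly from the defining integral \eqref{eq:FKO_def} of $\FKO$, matching them after a substitution $y \mapsto y^{-1}$. The right-hand side is the easier of the two: since $\sqrt{|\mathrm{Pf}_F|\cdot \Delta(x)}$, $\Theta(\log(y))$, and $\dmr(y)$ are all real and positive, complex conjugation only flips the sign of the exponential, so
\[
\overline{\FKO(\overline{f})(x)} = \frac{1}{\sqrt{|\mathrm{Pf}_F|\cdot\Delta(x)}}\int_{G} f(y)\, e^{-2\pi i\langle \kappa(x),\log(y)\rangle}\,\frac{\dmr(y)}{\sqrt{\Theta(\log(y))}}.
\]

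For the left-hand side, I would write out $\FKO(\sqrt{\Delta}\,\check{f})(x)$ from the definition and then change variables by $z = y^{-1}$. This substitution produces three small corrections which I expect to telescope cleanly to $1$:
\begin{enumerate}[label=(\roman*)]
    \item $\dmr(y) = \dmr(z^{-1}) = \Delta(z)\,\dmr(z)$, by Lemma \ref{lemma:haar_measure_properties}(iii);
    \item $\sqrt{\Delta(y)} = \sqrt{\Delta(z^{-1})} = 1/\sqrt{\Delta(z)}$, by multiplicativity of $\Delta$;
    \item $\Theta(\log(y)) = \Theta(-\log(z)) = \Delta(z)\,\Theta(\log(z))$, by \eqref{eq:modular quotion}.
\end{enumerate}
Combining these, the factor $\sqrt{\Delta(y)}$ from the integrand pairs with the Jacobian $\Delta(z)$ from (i) and with the $1/\sqrt{\Theta(\log(y))} = 1/\sqrt{\Delta(z)\,\Theta(\log(z))}$ from (iii), so all powers of $\Delta(z)$ cancel. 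Meanwhile $\log(y) = -\log(z)$ flips the sign in the exponent, and $\check{f}(y) = f(z)$, producing exactly the expression for $\overline{\FKO(\overline{f})(x)}$ displayed above.

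The argument is essentially a bookkeeping exercise, and I do not anticipate a substantive obstacle; the only care needed is to keep track of the three separate contributions of $\Delta(z)$ coming from the measure, the weight $\sqrt{\Delta}$, and the exponential coordinates factor $\Theta$. The identity may be viewed as the analogue, in the Fourier-Kirillov setting, of the classical fact $\widehat{\check f} = \overline{\widehat{\overline{f}}}$; the factor $\sqrt{\Delta}$ accompanying $\check{f}$ is precisely what is required for the symmetry to survive the non-unimodularity of $G$ and the non-trivial Jacobian $\Theta$.
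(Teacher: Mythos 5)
Your proposal is correct and follows essentially the same route as the paper: both sides are compared after the substitution $y\mapsto y^{-1}$, with the identity $\Theta(-X)=\Delta_G(\exp(X))\,\Theta(X)$ from \eqref{eq:modular quotion} doing the real work. The only cosmetic difference is that the paper first pulls the integral back to $\mathfrak{g}$ in exponential coordinates, where the inversion becomes $X\mapsto -X$ with trivial Jacobian, whereas you stay on $G$ and offset the Jacobian $\Delta(z)$ of $\dmr(z^{-1})$ against the $\sqrt{\Delta}$ weight and the $\Theta$ factor; your three contributions of $\Delta(z)$ do indeed cancel exactly as claimed.
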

\begin{proof}
Using \eqref{eq:modular quotion} we have that
    \begin{align*}
    \overline{\FKO(\overline{f})(x)}
     &=\frac{1}{\sqrt{|\mathrm{Pf}_F|\Delta(x)}}\int_\mathfrak{g}f(\exp(X))e^{-2\pi i\langle \kappa(x),X\rangle}\sqrt{\Theta(X)}\,\mathrm{d}X\\    
     &=\frac{1}{\sqrt{|\mathrm{Pf}_F|\Delta(x)}}\int_\mathfrak{g}f(\exp(-X))e^{2\pi i\langle \kappa(x),X\rangle}\sqrt{\Theta(-X)}\,\mathrm{d}X\\    
     &=\frac{1}{\sqrt{|\mathrm{Pf}_F|\Delta(x)}}\int_\mathfrak{g}f((\exp(X))^{-1})e^{2\pi i\langle \kappa(x),X\rangle}\sqrt{\Theta(-X)}\,\mathrm{d}X\\    
     &=\frac{1}{\sqrt{|\mathrm{Pf}_F|\Delta(x)}}\int_\mathfrak{g}\sqrt{\Delta_G(\exp(X))}\check{f}(\exp(X))e^{2\pi i\langle \kappa(x),X\rangle}\sqrt{\Theta(X)}\,\mathrm{d}X,
    \end{align*}
    which completes the proof.
\end{proof}

\begin{proposition}\label{prop:translate_of_FKO}
    Let $f\in L^2_r(G)$ and define $L_x f(y)=f(x^{-1}y)$, $R_xf(y)=f(yx)$ be the right and left translation operators, and $\Psi_{x^{-1}} = L_x R_x$. Then,
    \begin{equation*}
        \FKO^{-1}(R_x f) = \sqrt{\Delta(x)} \Psi_{x^{-1}} \FKO^{-1}(f) \quad \text{and} \quad R_{x}\mathcal{F}_{\mathrm{KO}}(f)=\sqrt{\Delta(x)}\mathcal{F}_{\mathrm{KO}}(\Psi_{x}f).
    \end{equation*}
\end{proposition}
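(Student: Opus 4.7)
I plan to prove both identities by direct substitution in the defining integrals \eqref{eq:FKO_def} and \eqref{eq:inverse_FKO}, in the same spirit as the proof of Lemma \ref{lemma:kirillov_complex_conjugate_final}. The key structural input is that $K$ is a group homomorphism, which is immediate from the defining relation $\langle K(g)F, X\rangle = \langle F, \Ad_{g^{-1}}X\rangle$ and the functoriality of $\Ad$.

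For the first identity I start from \eqref{eq:inverse_FKO} and substitute $w = zx$ in the integral defining $\FKO^{-1}(R_x f)(y)$. Three simultaneous pieces of bookkeeping take care of the rewriting. Right-invariance of $\mu_r$ gives $\dmr(wx^{-1}) = \dmr(w)$. The modular identity $\Delta(wx^{-1}) = \Delta(w)/\Delta(x)$, after taking the square root in the prefactor, extracts the advertised $\sqrt{\Delta(x)}$. The coupling in the exponential transforms as
\[
\langle \kappa(wx^{-1}), \log y\rangle \;=\; \langle K(x)\kappa(w), \log y\rangle \;=\; \langle \kappa(w), \Ad_{x^{-1}}\log y\rangle \;=\; \langle \kappa(w), \log \Psi_{x^{-1}}(y)\rangle,
\]
using the commuting square \eqref{exponential map and homeo}. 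After these substitutions the integrand reproduces the one appearing in $\FKO^{-1}(f)(\Psi_{x^{-1}}(y))$, provided the $\sqrt{\Theta}$ prefactor also matches; this requires the $\Ad$-invariance $\Theta(\Ad_{x^{-1}}Y) = \Theta(Y)$, which I would obtain from \eqref{eq:volume exponential} together with $\ad_{\Ad_g Y} = \Ad_g \circ \ad_Y \circ \Ad_{g^{-1}}$, since this leaves the operator inside the determinant merely conjugated and hence preserves the modulus of the determinant.

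For the second identity I would rerun the same argument on \eqref{eq:FKO_def}. Writing $R_x\FKO(f)(z) = \FKO(f)(zx)$, the factorisations $\Delta(zx) = \Delta(z)\Delta(x)$ and $\kappa(zx) = K(x^{-1})\kappa(z)$ rewrite the exponential coupling as $\langle \kappa(z), \log \Psi_x(y)\rangle$, and then substituting $v = \Psi_x(y)$ in the inner integral brings the integrand back into $\FKO$-form with the function conjugated. An alternative route is to apply $\FKO$ to the first identity and invoke $\FKO \circ \FKO^{-1} = \mathrm{id}$ from Proposition \ref{prop:FKO_unitary}; however, since $\FKO^{-1} \circ \FKO$ is only a projection onto $\FKO^{-1}(L^2_r(G))$, one must be careful about the direction in which the composition is inverted, which is why the direct substitution approach is preferable. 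The main obstacle throughout is combinatorial rather than conceptual: each of $\Delta$ and $\Theta$ enters the integrand in two separate places (a prefactor and a measure-change), and one has to verify that after every substitution the net power leaves precisely a single $\sqrt{\Delta(x)}$ factor and no residual $\Theta$-dependence.
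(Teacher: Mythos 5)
Your treatment of the first identity is correct and is essentially the paper's own argument run from the opposite end: the paper starts from $\FKO^{-1}(f)(x^{-1}zx)$ and massages it into $\tfrac{1}{\sqrt{\Delta(x)}}\FKO^{-1}(R_xf)(z)$, whereas you start from $\FKO^{-1}(R_xf)$ and substitute $w=zx$. The ingredients are identical (the homomorphism property of the co-adjoint action, multiplicativity of $\Delta$, right-invariance of $\mu_r$, and the conjugation-invariance $\Theta(\Ad_g Y)=\Theta(Y)$), and you in fact justify the $\Theta$-invariance more explicitly than the paper, which only remarks that ``the determinant is invariant under automorphisms.''

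The second identity is where your sketch does not close as written. Carrying out your own substitution carefully: $\kappa(zx)=K(x^{-1})\kappa(z)$ turns the exponent into $\langle\kappa(z),\log(xyx^{-1})\rangle$, and the change of variables $v=xyx^{-1}$ replaces $f(y)$ by $f(x^{-1}vx)=(\Psi_{x^{-1}}f)(v)$ --- not $(\Psi_{x}f)(v)$ --- while contributing a Jacobian $\dmr(y)=\Delta(x)\dmr(v)$. Note that this substitution is a conjugation, not a right translation, so right-invariance alone does not dispose of the measure; the Jacobian $\Delta(x)$ combines with the $1/\sqrt{\Delta(zx)}$ in the prefactor to give the net $\sqrt{\Delta(x)}$. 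The outcome is $R_x\FKO(f)=\sqrt{\Delta(x)}\,\FKO(\Psi_{x^{-1}}f)$, with $\Psi_{x^{-1}}$ where the statement prints $\Psi_x$. This is also what your own alternative route forces: applying $\FKO$ to the first identity and using $\FKO\circ\FKO^{-1}=\mathrm{id}$ (the direction that is safe, as you observe) yields $R_xf=\sqrt{\Delta(x)}\,\FKO\big(\Psi_{x^{-1}}\FKO^{-1}(f)\big)$, i.e.\ the second identity with $\Psi_{x^{-1}}$ on the range of $\FKO^{-1}$; one can also confirm the $\Psi_{x^{-1}}$ version directly on the affine group with $F=V^*$. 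So a proof that claims the substitution ``brings the integrand back into $\FKO$-form'' and lands exactly on $\FKO(\Psi_xf)$ is glossing over the one place where the computation disagrees with the printed formula. You should track the direction of the conjugation explicitly and flag the mismatch; the paper's proof is of no help here, since it only works out the first identity and dismisses the second as ``proved similarly.''
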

\begin{proof}
    We have that 
    \begin{align*}
        \big\langle K((yx^{-1})^{-1}) F, \log(z) \big\rangle &= \big\langle F, \Ad_{yx^{-1}}\log(z) \big\rangle\\
        &= \big\langle K(y^{-1}) F, \Ad_{x^{-1}}\log(z) \big\rangle =\big\langle K(y^{-1}) F, \log(\Psi_{x^{-1}}z) \big\rangle.
    \end{align*}
   Hence 
    \begin{align*}
    \FKO^{-1}(f)(x^{-1}zx)
    &=\sqrt{\Theta(\log(x^{-1}zx))}\int_{G}f(y)e^{-2\pi i\langle \kappa(y), \log(\Psi_{x^{-1}}(z))\rangle}\frac{1 }{\sqrt{|\mathrm{Pf}_F|\cdot\Delta(y)}} \dmr(y)\\
    &=\sqrt{\Theta(\Ad_{x^{-1}}\log(z))}\int_{G}f(y)e^{-2\pi i\langle \kappa(yx^{-1}), \log(z)\rangle}\frac{1}{ \sqrt{|\mathrm{Pf}_F|\cdot\Delta(y)}} \dmr(y)\\
    &=\sqrt{\Theta(\Ad_{x^{-1}}\log(z))}\int_{G}f(yx)e^{-2\pi i\langle \kappa(y), \log(z)\rangle}\frac{1}{ \sqrt{|\mathrm{Pf}_F|\cdot\Delta(yx)}} \dmr(y)\\
    &=\frac{\sqrt{\Theta(\log(z))}}{\sqrt{\Delta(x)}}\int_{G}
    f(yx)e^{-2\pi i\langle \kappa(y), \log(z)\rangle}\frac{1}{ \sqrt{|\mathrm{Pf}_F|\cdot\Delta(y)}} \dmr(y).
    \end{align*}
    The final step follows from \eqref{eq:volume exponential} together with the determinant being invariant under automorphisms. The identity $R_{x}\mathcal{F}_{\mathrm{KO}}(f)=\sqrt{\Delta(x)}\mathcal{F}_{\mathrm{KO}}(\Psi_{x}f)$ is proved similarly.
\end{proof}

For semi-direct products we have a slight simplification.
\begin{proposition}\label{semi:simple2}
    Let $G=R\rtimes_\phi H$. Then 
    \begin{multline*}
\FKO(f)(r_1,h_1)=\\ \frac{1}{\sqrt{|\mathrm{Pf}_F|\cdot\Delta_G(r_1,h_1)}}\int_{G}f(r_2,h_2)e^{2\pi i\langle F,\log_G(r_1r_2r_1^{-1},h_1\phi(r_1^{-1}r_2^{-1})(h_2)h_1^{-1})\rangle}\frac{\mathrm{d}\mu_r(r_2,h_2)}{\sqrt{\Theta(\log(r_2,h_2))}} .
    \end{multline*}
\end{proposition}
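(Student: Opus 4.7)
The plan is to specialize the general formula \eqref{eq:FKO_def} for $\FKO$ to the semi-direct product structure by rewriting the exponent in terms of the explicit group operation on $R\rtimes_\phi H$.

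First, I would transform the exponent. Using $\kappa(x)=K(x^{-1})F$ together with the defining property $\langle K(g)F,X\rangle=\langle F,\Ad_{g^{-1}}X\rangle$ of the co-adjoint map yields $\langle \kappa(x),\log(y)\rangle=\langle F,\Ad_x\log(y)\rangle$. Applying the commutative diagram \eqref{exponential map and homeo} to the inner automorphism $\Psi_x$, whose differential at the identity is $\Ad_x$, we get $\exp(\Ad_x\log(y))=\Psi_x(y)=xyx^{-1}$, and hence
\begin{equation*}
\langle \kappa(x),\log(y)\rangle=\langle F,\log_G(xyx^{-1})\rangle.
\end{equation*}

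Next, I would compute $xyx^{-1}$ explicitly with $x=(r_1,h_1)$ and $y=(r_2,h_2)$ in $R\rtimes_\phi H$. A short direct calculation from the product rule $(r,h)(s,j)=(rs,h\phi(r)(j))$ produces the inverse formula $(r_1,h_1)^{-1}=(r_1^{-1},\phi(r_1^{-1})(h_1^{-1}))$, and two further multiplications, combined with the homomorphism property $\phi(r)\phi(s)=\phi(rs)$, yield $xyx^{-1}$ with $R$-component $r_1 r_2 r_1^{-1}$ and an $H$-component expressed as a product of nested $\phi$-actions on $h_1$, $h_2$ and $h_1^{-1}$, matching the expression in the claimed formula.

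Finally, substituting this expression for $xyx^{-1}$ back into \eqref{eq:FKO_def}, I observe that the right Haar measure on $G=R\rtimes_\phi H$ factorizes as $d\mu_r^G=d\mu_r^R\,d\mu_r^H$, and the factors $\Delta_G(x)$ and $\Theta(\log(y))$ appear unchanged, so no further manipulation of the measure is needed. The main source of difficulty is purely algebraic bookkeeping in the semi-direct product: tracking how the $\phi$-action distributes over products in $H$ and collapsing nested $\phi(r)$'s via the homomorphism property. No new analytical subtleties arise beyond those already present in the general definition of $\FKO$.
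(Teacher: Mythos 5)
Your approach is essentially identical to the paper's: both rewrite the exponent via $\langle \kappa(x),\log(y)\rangle=\langle F,\Ad_{x}\log(y)\rangle=\langle F,\log_G(xyx^{-1})\rangle$, using the compatibility of $\exp$ with the inner automorphism $\Psi_x$, and then compute the conjugation $(r_1,h_1)(r_2,h_2)(r_1,h_1)^{-1}$ explicitly from the semi-direct product law. The only difference is that you also spell out the final substitution into the definition of $\FKO$ and the factorization of the Haar measure, which the paper leaves implicit.
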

\begin{proof}
Let $(r_1,h_1), \, (r_2,h_2)\in R\rtimes_\phi H$.
We have that 
   \begin{align*}
       \langle K((r_1,h_1)^{-1})F, \log_G(r_2,h_2)\rangle
       &=\langle F, \Ad_{(r_1,h_1)}\log_G(r_2,h_2)\rangle\\
        &=\langle F,\log_G((r_1,h_1)(r_2,h_2)(r_1,h_1)^{-1})\rangle\\
        &=\langle F,\log_G(r_1r_2r_1^{-1},h_1\phi(r_1^{-1}r_2^{-1})(h_2)h_1^{-1}))\rangle.
    \end{align*}
\end{proof}

\subsection{Combining the transforms}
As described in the introduction our goal is to define quantization by combining the Fourier transforms. Hence this section will be devoted to showing the connections between the range of the Fourier-Kirillov transform and the inverse Fourier-Wigner transform.
\begin{lemma}\label{lemma:invert_FKO_and_FW}
    Let $f$ be continuous on $G$ with compact support. Then
    \begin{align*}
        \FKO^{-1}(\FKO(f))(e) = \FW(\FW^{-1}(f))(e),
    \end{align*}
    where $e\in G$ is the group identity element.
\end{lemma}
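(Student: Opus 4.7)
The plan is to unwind both sides into pairings of $f$ against distributions on $G$, and then recognize the two resulting distributions as equal via Kirillov's character formula for the square-integrable representation $\pi=\pi_F$.

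For the left-hand side, at $x=e$ the formula \eqref{eq:inverse_FKO} simplifies using $\log(e)=0$ and $\Theta(0)=1$ to
\[
    \FKO^{-1}(g)(e) = \int_G g(y)\,\frac{d\mu_r(y)}{\sqrt{|\mathrm{Pf}_F|\Delta(y)}}.
\]
Substituting $g=\FKO(f)$ from \eqref{eq:FKO_def}, applying Fubini, and changing variables via the diffeomorphism $\kappa:G\to\mathcal{O}_F$ (by the chosen normalization of Haar measure together with Lemma \ref{lem:symplectic form}, the measure $(|\mathrm{Pf}_F|\Delta(y))^{-1}\,d\mu_r(y)$ pushes forward to Lebesgue measure $dY$ on the open set $\mathcal{O}_F\subset\mathfrak{g}^*$) yields
\[
    \FKO^{-1}(\FKO(f))(e) = \int_G f(z)\,\frac{1}{\sqrt{\Theta(\log(z))}}\left(\int_{\mathcal{O}_F} e^{2\pi i\langle Y,\log(z)\rangle}\,dY\right)d\mu_r(z).
\]

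For the right-hand side, combining $\FW(A)(e)=\tr(A\D)$ with $\FW^{-1}(f) = \pi(\check f)\circ\D = \int_G f(y)\pi(y^{-1})\D\,d\mu_r(y)$ and interchanging trace with integral gives
\[
    \FW(\FW^{-1}(f))(e) = \int_G f(y)\,\tr(\pi(y^{-1})\D^2)\,d\mu_r(y).
\]
Comparing with the previous expression, the lemma reduces to the distributional identity
\[
    \tr(\pi(z^{-1})\D^2) = \frac{1}{\sqrt{\Theta(\log(z))}}\int_{\mathcal{O}_F} e^{2\pi i\langle Y,\log(z)\rangle}\,dY,
\]
which is precisely Kirillov's character formula for $\pi=\pi_F$ in the Duflo-Moore normalization: the factor $\D^2$ absorbs the $j$-function part of the Jacobian of $\exp$ (compare \eqref{eq:volume exponential}), leaving only $\sqrt{\Theta(\log(z))}^{-1}$, while the orbital integral reduces to Lebesgue measure on $\mathcal{O}_F$ by Lemma \ref{lem:symplectic form} and the standing hypothesis $\dim\mathcal{O}_F=\dim G$.

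The main technical obstacle is justifying the trace-integral interchange on the right-hand side, since $\D$ is typically unbounded. For $f$ continuous of compact support this is handled by first establishing the identity on rank-one operators $A=\psi\otimes\D^{-1}\phi$ (for which $\FW(A)(e)=\langle\psi,\phi\rangle=\tr(A\D)$ directly by Proposition \ref{prop:fourier_wigner_rank_one}), extending by linearity to $\mathcal{S}^1\D^{-1}$, and then passing to the limit using the density from Lemma \ref{lemma:dense_subspace_xd} together with the compact support of $f$.
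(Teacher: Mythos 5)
Your proposal follows essentially the same route as the paper: both sides are unwound at the identity, the left-hand side becomes the orbital integral $\int_{\mathcal{O}_F}\int_{\mathfrak{g}}\check{f}(\exp(X))e^{-2\pi i\langle Y,X\rangle}\sqrt{\Theta(-X)}\,\mathrm{d}X\,\mathrm{d}Y$ via Lemma \ref{lem:symplectic form} and the chosen Haar normalization, the right-hand side becomes $\tr(\D\,\pi(\check f)\,\D)$, and the equality of the two is outsourced to the Kirillov--Duflo character formula (the paper cites \cite[Prop.~6.3.1]{ArnalDidier2020RoSL} for exactly this).

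One caveat on your last two steps. The operator $\pi(z^{-1})\D^2$ is not trace class, so the pointwise identity $\tr(\pi(z^{-1})\D^2)=\Theta(\log z)^{-1/2}\int_{\mathcal{O}_F}e^{2\pi i\langle Y,\log z\rangle}\,\mathrm{d}Y$ you reduce to is only a formal/distributional statement, and the patch you propose for it does not work: establishing $\FW(A)(e)=\tr(A\D)$ on rank-one operators $\psi\otimes\D^{-1}\phi$ and extending by density only re-verifies the definition of $\FW$; it says nothing about the character formula, which is the actual content of the lemma. The clean fix is simply not to interchange trace and integral at all --- keep the right-hand side as $\tr(\D\,\pi(\check f)\,\D)$ and invoke the character formula in its standard integrated (distributional) form for test functions $f$, which is precisely how the paper's citation is used. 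With that adjustment your argument coincides with the paper's.
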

\begin{proof}
This is simply a reformulation of  \cite[Prop.~6.3.1]{ArnalDidier2020RoSL} by writing
\[\FW(\FW^{-1}(f))(e)=\tr(\pi(\check{f})\circ \mathcal{D}\circ \mathcal{D})=\tr(\mathcal{D}\circ \pi(\check{f})\circ \mathcal{D})\] and 
\begin{align*}
\FKO^{-1}(\FKO(f))(e)
&= \int_{\mathcal{O}_F}\int_\mathfrak{g}\check{f}(\exp(X))e^{-2\pi i\langle Y,X\rangle}
\sqrt{\Theta(-X)}\,\mathrm{d}X\,\mathrm{d}Y. \qedhere
\end{align*}
\end{proof}

\begin{theorem}\label{thm:projection}
   Let $f\in L^2_r(G)$. Then
   \begin{align}\label{eq:fkofkofwfw}
        \FKO^{-1}(\FKO(f))=\FW(\FW^{-1}(f)).    
   \end{align}
   Consequently, $\FW \FW^{-1}$ is a projection onto $\FKO^{-1}(L^2_r(G))$ and $\FKO^{-1}(L^2_r(G))=\FW(\mathcal{S}^2(\mathcal{H}))$.
\end{theorem}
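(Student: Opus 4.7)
The identity \eqref{eq:fkofkofwfw} is an equation between two bounded operators on $L^2_r(G)$. Once it is proved, the remaining claims follow by identifying the ranges of the two projections: Proposition \ref{prop:FKO_unitary} already shows that $\FKO^{-1}\FKO$ is the orthogonal projection onto $\FKO^{-1}(L^2_r(G))$, while Propositions \ref{prop:FW_isometry} and \ref{prop:FW_identity} imply that $\FW\FW^{-1}$ is the orthogonal projection onto the closed subspace $\FW(\mathcal{S}^2(\mathcal{H}))$, so \eqref{eq:fkofkofwfw} forces the two ranges to coincide.

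To prove \eqref{eq:fkofkofwfw} itself, my plan is to verify pointwise equality of both sides for $f$ in a dense subclass (continuous and compactly supported), then pass to $L^2_r(G)$ by boundedness and density. The base case $x=e$ is exactly Lemma \ref{lemma:invert_FKO_and_FW}. To promote this to all $x\in G$, I would show that both operators commute with left translations $L_y$, defined by $L_y f(z) = f(y^{-1}z)$. For the Fourier--Wigner side this is a short trace calculation: rewriting Lemma \ref{lemma:wigner_rep_interaction}(i) as $\FW^{-1}(L_y f) = \Delta(y)^{-1/2}\FW^{-1}(f)\pi(y^{-1})$ and using the Duflo--Moore intertwining $\D\pi(x) = \sqrt{\Delta(x)}\pi(x)\D$ from Lemma \ref{lemma:haar_measure_properties}(iv), the modular factors combine via $\Delta(y)\Delta(y^{-1}) = 1$ to yield
\[
\FW(\FW^{-1}(L_y f))(x) \;=\; \FW(\FW^{-1}(f))(y^{-1}x) \;=\; L_y\FW(\FW^{-1}(f))(x).
\]
I would then establish the analogous identity $\FKO^{-1}(\FKO(L_y f)) = L_y\FKO^{-1}(\FKO(f))$, equivalently the $L_y$-invariance of $\FKO^{-1}(L^2_r(G))$, by a direct manipulation of the defining integral. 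With both equivariances in hand, the chain
\[
\FW(\FW^{-1}(f))(x) \;=\; \FW(\FW^{-1}(L_{x^{-1}}f))(e) \;=\; \FKO^{-1}(\FKO(L_{x^{-1}}f))(e) \;=\; \FKO^{-1}(\FKO(f))(x),
\]
in which the middle equality is Lemma \ref{lemma:invert_FKO_and_FW} applied to $L_{x^{-1}}f$, gives the desired pointwise equality at every $x\in G$.

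\textbf{Main obstacle.} The delicate step is the left-translation equivariance of $\FKO^{-1}\FKO$. On the Fourier--Wigner side this equivariance is encoded cleanly in the representation $\pi$, but the Fourier--Kirillov transform is written in exponential coordinates, and left translation $y\mapsto xy$ on $G$ does not reduce to a linear shift on $\mathfrak{g}$ because of Baker--Campbell--Hausdorff corrections: $\log(x\exp X)\neq \log x + X$ in general. To work around this I would reparametrize via the coadjoint orbit using the identity $\kappa(xw) = K(w^{-1})\kappa(x)$, so that left translation on $G$ corresponds to a coadjoint action on $\mathcal{O}_F$ under which the symplectic volume $d\omega$ is invariant (Lemma \ref{lem:symplectic form}); this invariance should absorb the Baker--Campbell--Hausdorff discrepancy and yield the required equivariance. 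Once this is in place, the remainder of the proof is a routine density argument using boundedness of $\FW\FW^{-1}$ and $\FKO^{-1}\FKO$ on $L^2_r(G)$.
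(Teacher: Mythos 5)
Your overall architecture --- establish \eqref{eq:fkofkofwfw} at the identity via Lemma \ref{lemma:invert_FKO_and_FW}, promote it to all of $G$ through a translation covariance of both projections, extend by density, and then identify the ranges --- is exactly the paper's, and your computation on the Fourier--Wigner side is correct: the modular factors coming from Lemma \ref{lemma:wigner_rep_interaction}\ref{item:wigner_rep_interaction} and the relation $\D\pi(x)=\sqrt{\Delta(x)}\pi(x)\D$ do cancel, giving $\FW(\FW^{-1}(L_yf))=L_y\FW(\FW^{-1}(f))$.

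The gap is on the Fourier--Kirillov side, precisely where you flagged it, and the proposed workaround does not close it. The identity $\kappa(xw)=K(w^{-1})\kappa(x)$ is a \emph{right}-translation covariance: right translation of the argument of $\kappa$ acts by the coadjoint action on the orbit point. Under a \emph{left} translation one instead gets $\kappa(y^{-1}x)=K(x^{-1})\bigl(K(y)F\bigr)$, which changes the base point of the orbit parametrization rather than acting on $\kappa(x)$, and the invariance of $\mathrm{d}\omega$ cannot help because the obstruction lives in the phase, not in the measure. Concretely, $\FKO^{-1}\FKO$ is, up to conjugation by the unitary $f\mapsto\sqrt{\Theta}\,f\circ\exp$, the Fourier multiplier with symbol $\chi_{\mathcal{O}_F}$ on $L^2(\mathfrak{g})$, whereas $L_y$ pulls back to a weighted composition with the nonlinear diffeomorphism $X\mapsto\log(y^{-1}\exp X)$; a Fourier multiplier has no reason to commute with such a map, and the fact that this particular one does is equivalent to $\FKO^{-1}(L^2_r(G))=\FW(\mathcal{S}^2(\mathcal{H}))$, i.e.\ to the very statement being proved. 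The paper avoids this by working with \emph{right} translations: Proposition \ref{prop:translate_of_FKO} decomposes $R_x$ acting on the output of $\FKO^{-1}$ into a conjugation $\Psi_{x^{-1}}$ --- which commutes exactly with $\exp$ via $\log(x^{-1}zx)=\Ad_{x^{-1}}\log(z)$, so no Baker--Campbell--Hausdorff correction ever appears --- together with a right translation of the \emph{input}, absorbed by the right-invariance of $\dmr$; the conjugation then vanishes upon evaluating at $e$. Rewriting your chain with $R_x$ in place of $L_{x^{-1}}$, using Proposition \ref{prop:translate_of_FKO} for the Kirillov side and Lemma \ref{lemma:wigner_rep_interaction}\ref{item:rep_wigner_interaction} for the Wigner side, closes the gap and recovers the paper's proof.
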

\begin{proof}
    We will start by assuming that $f$ is continuous with compact support. Then from Lemma \ref{lemma:invert_FKO_and_FW} we have that \[\FKO^{-1}(\FKO(f))(e)=\FW(\FW^{-1}(f))(e).\] Using Proposition \ref{prop:translate_of_FKO} we have that
    \begin{align*}
        \FKO^{-1}(\FKO(f))(x) &=R_x \FKO^{-1}(\FKO(f))(e)\\
        &=\FKO^{-1}(\sqrt{\Delta(x)}\Psi_x\FKO(f))(e)\\
        &=\FKO^{-1}(\FKO(R_x f))(e).
    \end{align*}
    Additionally, using that $R_x \FW(A) = \FW(\pi(x) A)$ (from the definition of $\FW$) and Lemma \ref{lemma:wigner_rep_interaction} \ref{item:rep_wigner_interaction} we can deduce that
    \begin{align*}
        \FW(\FW^{-1}(f))(x)&=R_x \FW(\FW^{-1}(f))(e)\\
        &=\FW(\pi(x)\FW^{-1}(f))(e)\\
        &=\FW(\FW^{-1}(R_xf))(e).
    \end{align*}
    Hence we have that 
    \begin{align*}
        \FKO^{-1}(\FKO(f))=\FW(\FW^{-1}(f))    
    \end{align*}
    for continuous $f$ with compact support. Since continuous functions with compact support are dense in $L^2_r(G)$, the result follows.
    
    To see that $\FW \FW^{-1}$ is a projection onto $\FKO^{-1}(L^2_r(G))$, let $F = \FKO^{-1}(f)$ be an arbitrary elements of $\FKO^{-1}(L^2_r(G))$. Then
    \begin{align*}
        \FW (\FW^{-1}(F)) = \FKO^{-1}(\FKO(F)) = \FKO^{-1}(\FKO(\FKO^{-1}(f))) = \FKO^{-1}(f) = F
    \end{align*}
    by \eqref{eq:fkofkofwfw} and Proposition \ref{prop:FKO_unitary}.

    Lastly for $\FKO^{-1}(L^2_r(G))=\FW(\mathcal{S}^2(\mathcal{H}))$, we first let $g \in \FKO^{-1}(L^2_r(G))$ with $g = \FKO^{-1}(f)$ for some $f \in L^2_r(G)$. Then
    \begin{align*}
        g = \FKO^{-1}(f) = \FKO^{-1}(\FKO(\FKO^{-1}(f))) = \FW(\FW^{-1}(g)) \in \FW(\mathcal{S}^2)
    \end{align*}
    where we used Proposition \ref{prop:FKO_unitary} for the second step. Similarly, for the other direction we let $g \in \FW(\mathcal{S}^2)$ with $g = \FW(A)$ for some $A \in \mathcal{S}^2$. It then holds that
    \begin{align*}
        g = \FW(A) = \FW(\FW^{-1}(\FW(A))) = \FKO^{-1}(\FKO(g)) \in \FKO^{-1}(L^2_r(G))
    \end{align*}
    and consequently the sets are identical.
\end{proof}

\begin{example}[Affine Group]
    Denote by $X_1=U$ and $X_2=V$. 
    We have for $F=V^*$ that
    \[\mathrm{Pf}_{V^*}=\frac{1}{2}\sum_{\sigma\in S_2}\mathrm{sgn}(\sigma)V^*([X_{\sigma(1)},X_{\sigma(2)}])=1.\]
    Additionally, we have that $\kappa(a,x)=aV^*-xU^*$, since co-adjoint map is given by \[K(a,x)(V^*)=a^{-1}V^*+a^{-1}xU^*.\] 
    This means that \[\langle\kappa(a,x), uU+vV \rangle =av-xu,\] which is the standard symplectic form.
    Hence 
    \begin{align*}
        \FKO(f)(a,x) &= \sqrt{a}\int_{\mathrm{Aff}}f(b,y)e^{2\pi i(ay/\lambda(\log(b))-x\log(b))}\frac{\dmr(b,y)}{\sqrt{b\lambda(\log(b))}}\\
        &=\sqrt{a}\int_{\mathbb{R}^{2}}f\left(e^u,v\cdot \lambda(u)\right)e^{2\pi i (av-xu)} \sqrt{e^u\lambda(u)}\, \mathrm{d}u \, \mathrm{d}v.
    \end{align*}

    Notice that if we had chosen the other orbit corresponding to $F=-V^*$ we would get the Fourier transformation 
    \begin{align*}
        \FKO^-(f)(a,x)&=\sqrt{a}\int_{\mathrm{Aff}}f(b,y)e^{2\pi i(x\log(b)-ay/\lambda(\log(b)))}\frac{\dmr(b,y)}{\sqrt{b\lambda(\log(b))}} \\
        &=\sqrt{a}\int_{\mathbb{R}^{2}}f\left(e^u,v\lambda(u)\right)e^{2\pi i (xu-av)}\sqrt{e^u\lambda(u)} \, \mathrm{d}u \, \mathrm{d}v.
    \end{align*}
\end{example}
\begin{example}[Shearlet Group]
    For the Shearlet group we have that $\D \phi(a,t)=a\phi(a,t)$.
    Let \[Af(c,r)=\int_{\mathbb{R}^{+}\times \mathbb{R}}K_A((c,r),(b,t))f(b,t)\,\frac{\mathrm{d}b\,\mathrm{d}t}{b}.\]
    Hence we have that 
    \[\FW(A)(a,s,x_1,x_2)=\int_{\mathbb{R}^{+}\times \mathbb{R}}K_A((b,t),(ab,t+s\sqrt{b}))e^{-2\pi i (bx_1+\sqrt{b}tx_2)}\,\mathrm{d}b\,\mathrm{d}t.\]

    Denote the basis by $X_1=A,\, X_2=B,\, X_3=C$, and $X_4=D$, and $Y_i$ the dual basis. Computing Pfaffian for $F=\pm Y_3$ we get that $|\mathrm{Pf}_{\pm Y_3^*}|=1$. Computing for $F=Y_3^*$ we get that \[\langle \kappa(a,s,x_1,x_2), \alpha A+\sigma B+\xi_1 C+\xi_2 D)\rangle=\frac{sx_2}{2}\alpha+\sqrt{a}(s\xi_2-x_2\sigma) + a\xi_1-x_1\alpha.\]
    The Fourier-Kirillov transform for $F=Y_3^*$ hence becomes
    \begin{multline*}\label{eq:FKO_def}
        \FKO(f)(a,s,x_1,x_2)
        \\= a\int_{\mathfrak{g}}f(\exp(\alpha,\beta,\gamma,\delta))e^{2\pi i\left(\frac{sx_2}{2}\alpha+\sqrt{a}(s\xi_2-x_2\sigma) + a\xi_1-x_1\alpha\right)}\sqrt{\lambda(\alpha)}\lambda(\alpha/2)\,\mathrm{d}\alpha\,\mathrm{d}\sigma\,\mathrm{d}\xi_1\,\mathrm{d}\xi_2,
    \end{multline*}
    where \[\exp(\alpha,\beta,\gamma,\delta)=(e^\alpha,\sigma \lambda(\alpha/2),\xi_1\lambda(\alpha)+\sigma \xi_2\lambda(\alpha/2)^2/2,\xi_2\lambda(\alpha/2)).\]
    
\end{example}

\section{Weyl Quantization}\label{sec:quantization_properties}
So far, we have extensively discussed the Fourier-Wigner (Section \ref{sec:FW}) and Fourier-Kirillov (Section \ref{sec:FKO}) transforms. We will now compose these to define our version of Weyl quantization in the same way as was done in Section \ref{sec:prelim_fw_fko}. This means that we define the quantization mapping as
\begin{align}\label{eq:quantization_anchor}
    A : f \mapsto A_f = \FW^{-1}(\FKO^{-1}(f)).
\end{align}
From the results about the Fourier-Wigner and Fourier-Kirillov transforms of Section \ref{sec:main_weyl}, we can deduce the following important property.
\begin{theorem}\label{theorem:quantization_main}
    The quantization mapping $A : L^2_r(G) \to \mathcal{S}^2$ and its inverse $a : \mathcal{S}^2 \to L^2_r(G)$ are both linear unitary isometries.
\end{theorem}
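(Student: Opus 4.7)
The plan is to assemble the claim directly from the four results of Section \ref{sec:main_weyl} by tracking norms and images through the composition $A_f = \FW^{-1}(\FKO^{-1}(f))$. Linearity is immediate from the linearity of $\FW^{-1}$ and $\FKO^{-1}$, so the real content is norm preservation and bijectivity, and this is really a bookkeeping exercise once the correct spaces are matched up.

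For the isometry, I would take $f \in L^2_r(G)$ and set $g = \FKO^{-1}(f)$. By Theorem \ref{thm:projection}, $\FKO^{-1}(L^2_r(G)) = \FW(\mathcal{S}^2)$, so there exists $B \in \mathcal{S}^2$ with $g = \FW(B)$; Proposition \ref{prop:FW_identity} then gives $\FW^{-1}(g) = B$, and Proposition \ref{prop:FW_isometry} yields $\Vert B \Vert_{\mathcal{S}^2} = \Vert \FW(B)\Vert_{L^2_r} = \Vert g \Vert_{L^2_r}$. Finally, Proposition \ref{prop:FKO_unitary} shows $\FKO^{-1}$ is unitary on $L^2_r(G)$, so $\Vert g \Vert_{L^2_r} = \Vert f \Vert_{L^2_r}$. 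Chaining these identities gives $\Vert A_f \Vert_{\mathcal{S}^2} = \Vert f \Vert_{L^2_r}$.

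For surjectivity, given $S \in \mathcal{S}^2$ I would define $f := \FKO(\FW(S)) \in L^2_r(G)$. Since $\FW(S) \in \FW(\mathcal{S}^2) = \FKO^{-1}(L^2_r(G))$ and $\FKO^{-1}\FKO$ is (by Proposition \ref{prop:FKO_unitary} together with Theorem \ref{thm:projection}) the orthogonal projection onto this subspace, it acts as the identity on $\FW(S)$, so $\FKO^{-1}(f) = \FW(S)$. Applying $\FW^{-1}$ and using Proposition \ref{prop:FW_identity} gives $A_f = S$. Hence $A$ is a surjective linear isometry from $L^2_r(G)$ to $\mathcal{S}^2$, i.e.\ a unitary; the inverse $a = A^{-1}$ is automatically a unitary isometry as well.

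The only step that is at all subtle is the matching of images, namely the identity $\FKO^{-1}(L^2_r(G)) = \FW(\mathcal{S}^2)$ from Theorem \ref{thm:projection}. Without it one would only be able to say that $\FW^{-1}$ is defined on all of $L^2_r(G)$ via the left-inverse formula of Definition \ref{def:fourier_wigner}, but norm preservation and surjectivity of the composition would not follow. Since that identification has already been proved, the remaining argument is essentially a two-line chase through the commutative structure.
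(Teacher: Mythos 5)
Your proposal is correct and uses exactly the same ingredients as the paper's proof (Proposition \ref{prop:FW_isometry}, Proposition \ref{prop:FKO_unitary}, Proposition \ref{prop:FW_identity}, and crucially the image identification $\FKO^{-1}(L^2_r(G)) = \FW(\mathcal{S}^2)$ from Theorem \ref{thm:projection}); the only difference is organizational, in that you establish an isometry plus explicit surjectivity while the paper verifies inner-product preservation for both $A$ and $a$ separately. No gaps.
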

\begin{proof}
    Linearity of $A$ and $a$ follow from the definitions of $\FW$ and $\FKO$. For the unitary isometry part, note first that for $A, B \in \mathcal{S}^2$,
    \begin{align*}
        \big\langle \FKO(\FW(A)), \FKO(\FW(B)) \big\rangle_{L^2_r} = \big\langle \FKO^{-1}(\FKO(\FW(A))), \FKO^{-1}(\FKO(\FW(B))) \big\rangle_{L^2_r}
    \end{align*}
    since $\FKO^{-1}$ is unitary by Proposition \ref{prop:FKO_unitary}. The same proposition also states that $\FKO^{-1} \FKO$ is a projection onto $\FKO^{-1}(L^2_r)$ which is equal to $\FW(\mathcal{S}^2)$ by Theorem \ref{thm:projection}. Consequently,
    \begin{align*}
        \big\langle \FKO^{-1}(\FKO(\FW(A))), \FKO^{-1}(\FKO(\FW(B))) \big\rangle_{L^2_r} = \big\langle \FW(A), \FW(B) \big\rangle_{L^2_r}
    \end{align*}
    and the result now follows from $\FW$ being unitary by Proposition \ref{prop:FW_isometry}.

    The other direction of the quantization follows from a similar argument. Let $f, g \in L^2_r(G)$ and note that by $\FW$ being unitary (Proposition \ref{prop:FW_isometry}) and by $\FW \FW^{-1}$ being a projection onto $\FKO^{-1}(L^2_r(G))$ (Theorem \ref{thm:projection}),
    \begin{align*}
        \big\langle \FW^{-1}(\FKO^{-1}(f)), \FW^{-1}(\FKO^{-1}(g)) \big\rangle_{\mathcal{S}^2} &= \big\langle \FW(\FW^{-1}(\FKO^{-1}(f))), \FW(\FW^{-1}(\FKO^{-1}(g))) \big\rangle_{L^2_r}\\
        &= \big\langle \FKO^{-1}(f), \FKO^{-1}(g) \big\rangle_{L^2_r}\\
        &= \langle f, g \rangle_{L^2_r}
    \end{align*}
    where we in the last step used that $\FKO^{-1}$ is unitary by Proposition \ref{prop:FKO_unitary}.
\end{proof}

We now move to showing more detailed properties of the quantization mapping, starting with the actions of translation and conjugation.
\begin{example}[Affine group]
For the representation given by $F=V^*$ we get the quantization outlined in \cite{gayral2007fourier}. The dequantization of an operator $A$ is explicitly given by 
\begin{equation*}
    f_A(a,x) = \int_{-\infty}^{\infty}K_A\left(\frac{aue^{u}}{e^{u} - 1},\frac{au}{e^{u}-1}\right)e^{-2\pi i x u} \, du,
\end{equation*}
where $K_A \colon \mathbb{R}_{+} \times \mathbb{R}_{+} \to \mathbb{C}$ is the integral kernel of $A$ defined by \[A\psi(r) = \int_{0}^{\infty}K_A(r,s)\psi(s) \, \frac{ds}{s}, \qquad \psi \in L^{2}(\mathbb{R}_{+}).\] 
\end{example}
\begin{example}[Shearlet group]
    Let \[Af(c,r)=\int_{\mathbb{R}^{+}\times \mathbb{R}}K_A((c,r),(b,t))f(b,t)\,\frac{\mathrm{d}b\,\mathrm{d}t}{b}.\]
    Using the formula for the Fourier-Kirillov and Fourier-Wigner transform for the Shearlet group we get that the dequantization $f_A = \FKO(\mathcal{F}_W(A))$ of $A$ at the point $(a,b,c,d) \in \mathbb{S}$ has the explicit expression
\begin{multline*}
\int_{\mathbb{R}^2}K_A\left(\left( \frac{ae^\alpha}{\lambda(\alpha)},\frac{2\lambda(\alpha)s+\sigma \lambda(\alpha/2)^2} {2\sqrt{\lambda(\alpha)}\lambda(\alpha/2)}\right),\left(\frac{a}{\lambda(\alpha)},\,\frac{2\lambda(\alpha)s-\sigma \lambda(\alpha/2)^2} {2\sqrt{\lambda(\alpha)}\lambda(\alpha/2)}\right)\right)         e^{2\pi i\left(\frac{sx_2-2x_1}{2}\alpha-x_2\sigma\right)}\,\mathrm{d}\alpha\,\mathrm{d}\sigma.
\end{multline*}
\end{example}
\subsection{Translation and conjugation}
In this section we investigate how quantization behaves with respect to translation and complex conjugation. In the proofs, we will have to follow the quantization through the two Fourier transforms $\FW$ and $\FKO$ in \eqref{eq:quantization_anchor}. For translation, most of the work was done in Lemma \ref{lemma:wigner_rep_interaction} and Proposition \ref{prop:translate_of_FKO}. Note that the following relation is the same as was shown for the affine group in \cite[p.~47]{Berge2022}.
\begin{proposition}\label{prop:quantization_translation}
Let $f \in L^2_r(G)$ and $x \in G$, then
    $$
    \pi(x)^* A_f \pi(x) = A_{R_{x^{-1}}f}
    $$
    where $R_{x^{-1}} f(y) = f(yx^{-1})$.
\end{proposition}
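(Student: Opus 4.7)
The natural strategy is to track the translation through the two Fourier transforms that define $A_f = \FW^{-1}(\FKO^{-1}(f))$, using Proposition \ref{prop:translate_of_FKO} to pass $R_{x^{-1}}$ across $\FKO^{-1}$ and then Lemma \ref{lemma:wigner_rep_interaction} to pass the resulting translations across $\FW^{-1}$. The conjugation $\pi(x)^* (\cdot) \pi(x)$ on the operator side should appear exactly when the left-translation is absorbed on the right of $\FW^{-1}$ and the right-translation on the left, with the modular factors collapsing.

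First I would expand $A_{R_{x^{-1}}f} = \FW^{-1}(\FKO^{-1}(R_{x^{-1}}f))$ and apply Proposition \ref{prop:translate_of_FKO} (with $x$ replaced by $x^{-1}$) to obtain
\[
    \FKO^{-1}(R_{x^{-1}}f) = \sqrt{\Delta(x^{-1})}\, \Psi_x\, \FKO^{-1}(f) = \sqrt{\Delta(x^{-1})}\, L_{x^{-1}} R_{x^{-1}}\, \FKO^{-1}(f),
\]
since $\Psi_x = L_{x^{-1}} R_{x^{-1}}$ as an operator on functions. Setting $g = \FKO^{-1}(f)$, the task reduces to computing $\FW^{-1}(L_{x^{-1}} R_{x^{-1}} g)$.

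Next I would peel off the translations one at a time. Lemma \ref{lemma:wigner_rep_interaction}\ref{item:rep_wigner_interaction} gives $\FW^{-1}(R_{x^{-1}} g) = \pi(x^{-1}) \FW^{-1}(g) = \pi(x)^* \FW^{-1}(g)$. Then Lemma \ref{lemma:wigner_rep_interaction}\ref{item:wigner_rep_interaction}, applied with $x$ replaced by $x^{-1}$ and rearranged, yields $\FW^{-1}(L_{x^{-1}} h) = \sqrt{\Delta(x)}\, \FW^{-1}(h)\, \pi(x)$ for any $h \in L^2_r(G)$. Combining these two identities with $h = R_{x^{-1}} g$ gives
\[
    \FW^{-1}(L_{x^{-1}} R_{x^{-1}} g) = \sqrt{\Delta(x)}\, \pi(x)^*\, \FW^{-1}(g)\, \pi(x).
\]

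Finally I would assemble the pieces, using $\sqrt{\Delta(x^{-1})}\sqrt{\Delta(x)} = 1$, to conclude
\[
    A_{R_{x^{-1}}f} = \sqrt{\Delta(x^{-1})}\, \FW^{-1}(L_{x^{-1}} R_{x^{-1}} g) = \pi(x)^*\, \FW^{-1}(\FKO^{-1}(f))\, \pi(x) = \pi(x)^*\, A_f\, \pi(x).
\]
The main obstacle I anticipate is bookkeeping: getting the order of $L_{x^{-1}}$ and $R_{x^{-1}}$ inside $\Psi_x$ correct (since left- and right-translations commute only on functions, not in general on operators), and ensuring the modular-function factors from Proposition \ref{prop:translate_of_FKO} and from Lemma \ref{lemma:wigner_rep_interaction}\ref{item:wigner_rep_interaction} cancel exactly rather than leaving a spurious $\sqrt{\Delta(x)}$.
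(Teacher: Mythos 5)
Your argument is essentially the paper's proof run in reverse: the paper starts from $\pi(x)^*A_f\pi(x)$ and pushes the representation operators inside $\FW^{-1}$ using Lemma \ref{lemma:wigner_rep_interaction}, then identifies the result with $\FKO^{-1}(R_{x^{-1}}f)$ via Proposition \ref{prop:translate_of_FKO}, whereas you start from $A_{R_{x^{-1}}f}$ and unwind in the other direction. The translation bookkeeping and the cancellation $\sqrt{\Delta(x^{-1})}\sqrt{\Delta(x)}=1$ all check out (your parenthetical ``with $x$ replaced by $x^{-1}$'' is unnecessary --- the identity $\FW^{-1}(L_{x^{-1}}h)=\sqrt{\Delta(x)}\,\FW^{-1}(h)\pi(x)$ is just item (i) of the lemma rearranged by linearity --- but the formula you use is correct). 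The one genuine omission is that Lemma \ref{lemma:wigner_rep_interaction} is stated only for functions in $L^1_r(G)$, since $\FW^{-1}$ is there defined by an absolutely convergent integral; you apply it to $g=\FKO^{-1}(f)$ and to $R_{x^{-1}}g$, which for general $f\in L^2_r(G)$ need not be integrable. The paper handles this by first proving the identity for $f\in L^1_r(G)\cap L^2_r(G)$ and then extending by density using the continuity of $A:L^2_r(G)\to\mathcal{S}^2$; you should add that reduction at the start of your argument.
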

\begin{proof}
    Assume that $f \in L^2_r(G) \cap L^1_r(G)$ so that we may apply Lemma \ref{lemma:wigner_rep_interaction}. The full result then follows by density and the continuity of $A$ from $L^2_r(G)$ to $\mathcal{S}^2$. We can then compute
    \begin{align*}
        \pi(x)^* A_f \pi(x) &= \pi(x^{-1}) \FW^{-1} (\FKO^{-1}(f)) \pi(x)\\
        &= \FW^{-1} ( R_{x^{-1}} \FKO^{-1}(f)) \pi(x)\\
        &= \FW^{-1} \left( \frac{1}{\sqrt{\Delta(x)}} L_{x^{-1}} R_{x^{-1}} \FKO^{-1}(f)\right)
    \end{align*}    
    where we in the last step used Proposition \ref{prop:translate_of_FKO} as $\frac{1}{\sqrt{\Delta(x)}} L_{x^{-1}} R_{x^{-1}} \FKO^{-1}(f) = \FKO^{-1}(R_{x^{-1}} f)$.
\end{proof}
We now move on to the quantization of the complex conjugates.
\begin{proposition}\label{prop:quantization_adjoint}
    Let $f \in L^2_r(G)$, then
    $$
    A_f^* = A_{\bar{f}}.
    $$
\end{proposition}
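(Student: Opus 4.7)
The plan is to push the adjoint through each of the two transforms composing $A_f = \FW^{-1}(\FKO^{-1}(f))$ and then match the result with the definition of $A_{\bar{f}}$. The proof mirrors the strategy used for Proposition~\ref{prop:quantization_translation}: derive how the adjoint acts on $\FW^{-1}$, derive how complex conjugation acts on $\FKO^{-1}$, and show that the two resulting operations agree on the relevant subspace $\FKO^{-1}(L^2_r(G)) = \FW(\mathcal{S}^2)$.

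First I would work out the adjoint identity for the inverse Fourier-Wigner transform. Starting from $\FW^{-1}(g) = \pi(\check{g}) \circ \D$, I take formal adjoints to obtain $\FW^{-1}(g)^{*} = \D \circ \pi(\check{g})^{*}$. Expanding the integrated representation, passing the inversion inside via the standard relation $\int h(x)\dml(x) = \int h(y^{-1})\dmr(y)$, and using the intertwining identity $\D\pi(x) = \sqrt{\Delta(x)}\,\pi(x)\D$ from Lemma~\ref{lemma:haar_measure_properties}(iv), one is led to an identity of the form $\FW^{-1}(g)^{*} = \FW^{-1}(\tau g)$, where $\tau$ is a concrete involution on $L^{2}_{r}(G)$ built from complex conjugation, the group involution $\check{\cdot}$, and a power of the modular function $\Delta$.

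Next I would derive the matching formula on the Fourier-Kirillov side, namely an expression for $\FKO^{-1}(\bar{f})$ involving the same operation $\tau$ applied to $\FKO^{-1}(f)$. The computation parallels the proof of Lemma~\ref{lemma:kirillov_complex_conjugate_final}, but applied to \eqref{eq:inverse_FKO} rather than to $\FKO$: inside the integral representation of $\FKO^{-1}(\bar{f})$ I substitute $y\mapsto y^{-1}$, picking up a factor $\Delta(y)$ from $\dmr(y^{-1}) = \Delta(y)\dmr(y)$; I use $\log(y^{-1}) = -\log(y)$ to flip the sign in the exponential and convert it into an evaluation of $\FKO^{-1}(f)$ at $x^{-1}$; and finally I absorb the ratio of $\sqrt{\Theta(\log x)}$ and $\sqrt{\Theta(-\log x)}$ into a power of $\Delta(x)$ via the modular relation \eqref{eq:modular quotion}.

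Putting the two steps together gives $A_f^{*} = \FW^{-1}(\FKO^{-1}(f))^{*} = \FW^{-1}(\tau(\FKO^{-1}(f))) = \FW^{-1}(\FKO^{-1}(\bar{f})) = A_{\bar{f}}$. To keep all manipulations under absolute convergence, I would first prove the identity for $f$ continuous with compact support and then extend to all of $L^{2}_{r}(G)$ by density, invoking the boundedness of the quantization map from Theorem~\ref{theorem:quantization_main}. The main technical obstacle is bookkeeping: because $G$ is not assumed unimodular, each change between $\dml$ and $\dmr$, each inversion $y\mapsto y^{-1}$, and each commutation past $\D$ produces a $\sqrt{\Delta}$-factor, and the content of the argument is precisely that all of these factors conspire to produce the same operation $\tau$ on both sides of the desired identity.
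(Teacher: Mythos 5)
Your proposal is correct and follows essentially the same route as the paper: both identify the adjoint on the operator side with the involution $g \mapsto \sqrt{\Delta(\cdot)}\,\overline{\check{g}}$ on the intermediate space $\FW(\mathcal{S}^2)$, match it against Lemma~\ref{lemma:kirillov_complex_conjugate_final} on the Fourier--Kirillov side, and finish by density. The only cosmetic difference is that the paper establishes the first step by computing $\FW(A_f^*)$ through the singular value decomposition and trace, whereas you take formal adjoints of $\FW^{-1}(g) = \pi(\check{g})\circ\D$ — dual formulations of the same identity.
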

\begin{proof}
    To start, we assume that $f$ is such that $A_f\D \in \mathcal{S}^1$ so that we can apply the Fourier-Wigner transform. Recall that $A_f = \FW^{-1}(\FKO^{-1}(f))$. We will compute $\FW(A_f^*)$ and show that it coincides with $\FW(A_{\bar{f}}) = \FKO^{-1}(\bar{f})$ which yields the desired conclusion by the injectivity of $\FW$ guaranteed by Proposition \ref{prop:FW_isometry}.

    Let $A_f = \sum_m s_m (\psi_m \otimes \phi_m)$ be the singular value decomposition of $A_f$ and $A = A_f \D$ so that we may conclude that
    \begin{align*}
        \FW(A_f)(x) &= \FW(A\D^{-1})(x) = \tr\big(A\pi(x)\big) = \tr\big(A_f \D \pi(x)\big)\\
        &= \sum_n \langle A_f \D \pi(x)\xi_n, \xi_n \rangle\\
        &= \sum_n \left\langle \sum_m s_m (\psi_m \otimes \phi_m) \D \pi(x) \xi_n, \xi_n \right\rangle\\
        &= \sum_n \sum_m s_m \langle \D \pi(x) \xi_n, \phi_m \rangle \langle \psi_m, \xi_n\rangle\\
        &= \sum_m s_m \langle \psi_m, \pi(x^{-1}) \D \phi_m \rangle.
    \end{align*}
    Meanwhile, using that $A_f^* = \D^{-1} A^*$ we find that
    \begin{align*}
        \FW(A_f^*)(x) &= \FW(\D^{-1} A^*)(x) = \FW(\D^{-1} A^* \D \D^{-1})\\
        &= \tr\big(\D^{-1} A^* \D \pi(x)\big) = \tr\big(A_f^* \D \pi(x)\big)\\
        &= \sum_n \langle A_f^* \D \pi(x) \xi_n, \xi_n \rangle\\
        &= \overline{ \sum_m s_m \langle \pi(x)^* \D \psi_m, \phi_m\rangle }
    \end{align*}
    where we in the last step used the same argument as for $\FW(A_f)$. Continuing, we find that
    \begin{align*}
        \FW(A_f^*)(x) &= \overline{ \sum_m s_m \langle \psi_m, \sqrt{\Delta(x)}\pi(x)\D \phi_m\rangle }\\
        &= \sqrt{\Delta(x)}\, \overline{ \sum_m s_m \langle \psi_m, \pi(x)\D \phi_m\rangle }\\
        &= \sqrt{\Delta(x)}\, \overline{\FW(A_f)(x^{-1})}.
    \end{align*}
    From Lemma \ref{lemma:kirillov_complex_conjugate_final} we have that
    $$
    \FKO^{-1}(\bar{f})(x) = \sqrt{\Delta(x)}\overline{\widecheck{\FKO^{-1}(f)}(x)}
    $$
    and plugging this into the above yields
    \begin{align*}
        \FW(A_f^*)(x) = \sqrt{\Delta(x)} \frac{1}{\sqrt{\Delta(x)}} \FKO^{-1}(\bar{f})(x)
    \end{align*}
    from which it follows that $A_f^* = A_{\bar{f}}$ using the injectivity of $\FW$.

    For the full case, for any $\varepsilon > 0$ we can by Lemma \ref{lemma:dense_subspace_xd} find an $A_g \in \mathcal{S}^2$ with $A_g\D \in \mathcal{S}^1$ such that 
    $$
    \Vert A_f - A_g \Vert_{\mathcal{S}^2} < \varepsilon.
    $$
    It then holds that $A_g^* = A_{\bar{g}}$ and so we have
    \begin{align*}
        \Vert A_f^* - A_{\bar{f}} \Vert_{\mathcal{S}^2} \leq \Vert A_f^* - A_g^* \Vert_{\mathcal{S}^2} + \Vert A_{\bar{g}} - A_{\bar{f}} \Vert_{\mathcal{S}^2} < 2\varepsilon.
    \end{align*}
    Since $\varepsilon$ was arbitrary, the conclusion follows.
\end{proof}

\subsection{Algebraic structure}\label{sec:algebraic_structure}
With the basic properties of the quantization mapping established, we now move on to endowing each step of the quantization map with algebraic structure. First we show that $A : L^2_r(G) \to \mathcal{S}^2$ is a form of isomorphism and then move on to the individual Fourier transforms.

\subsubsection{An $H^*$-algebra $\ast$-isomorphism}\label{sec:main_iso}
In his seminal 1966 paper \cite{Pool1966}, Pool showed that Weyl quantization on the Weyl-Heisenberg group can be realized as an isometric $\ast$-isomorphism between $H^*$-algebras. In this section, we set out to do the same in our more general setup. As a first step, we recall the definition of an $H^*$-algebra \cite{Wong1998}. Note that the Hilbert space $\mathcal{H}$ in the definition below is not the same as the one associated with our representation $\pi$.
\begin{definition}\label{def:H_star_alg}
    Let $\mathcal{H}$ be a complex and separable Hilbert space and $\cdot : a, b \mapsto a \cdot b, {}^* : a \mapsto a^*$ two operations satisfying the properties
    \begin{enumerate}[label=(\roman*)]
        \item $a \cdot (b+c) = a \cdot b+a \cdot c,\quad (a+b) \cdot c = a \cdot c+b \cdot c$,
        \item $\lambda(a \cdot b) = (\lambda a)\cdot b = a\cdot(\lambda b)$,
        \item $a \cdot (b \cdot c) = (a \cdot b) \cdot c$,\label{item:associativity}
        \item $a^{**} = a$,
        \item $(a+b)^* = a^* + b^*$,\label{item:involution_distributive}
        \item $(a \cdot b)^* = b^* \cdot a^*$,\label{item:involution_of_product}
        \item $(\lambda a)^* = \bar{\lambda} a^*$,
        \item $\Vert a^* \Vert = \Vert a \Vert$,
        \item $\Vert a \cdot b \Vert \leq \Vert a \Vert \Vert b \Vert$, \label{item:prod_submultiplicativity}
        \item $\langle a \cdot b,c \rangle = \langle b, a^* \cdot c \rangle$, \label{item:H_adjoint}
    \end{enumerate}
    for $a, b, c \in \mathcal{H}$ and $\lambda \in \mathbb{C}$. Then we call $(\mathcal{H},\, \cdot,\, {}^*)$ an $H^*$-algebra with respect to the given operations.
\end{definition}
We want to show that the quantization mapping $A : f \mapsto A_f$ \eqref{eq:quantization_anchor} provides a $\ast$-isomorphism between the function and operator spaces. Since we have already showed that complex conjugation corresponds to taking the adjoint in Proposition \ref{prop:quantization_adjoint}, the main obstacle to this is defining a mapping on $L^2_r(G)$ which should correspond to composing operators through quantization. In the Weyl-Heisenberg case, this need is fulfilled by \emph{twisted multiplication} \cite[Chap.~2.3]{folland1989harmonic}, sometimes also referred to as \emph{Moyal products} \cite[Sec.~13.3.3]{hall2013quantum}, which have an explicit definition. In the general setting, such an explicit expression is too much to hope for but it is possible to set up a suitable mapping in a backwards manner as
\begin{align}\label{eq:twisted_multiplication}
    f \sharp g := a_{A_f A_g} \iff A_{f \sharp g} = A_f A_g
\end{align}
and we refer to it as twisted multiplication as it coincides with the Weyl-Heisenberg definition. Note that the twisted multiplication of two $L^2_r(G)$ functions is another $L^2_r(G)$ function since
\begin{align*}
    \Vert A_{f \sharp g} \Vert_{\mathcal{S}^1} = \Vert A_f A_g \Vert_{\mathcal{S}^1} \leq \Vert A_f \Vert_{\mathcal{S}^2} \Vert A_g \Vert_{\mathcal{S}^2}
\end{align*}
which implies that $A_{f \sharp g} \in \mathcal{S}^2$ since $\mathcal{S}^1 \subset \mathcal{S}^2$ and in turn $f \sharp g \in L^2_r(G)$ by Theorem \ref{theorem:quantization_main}.

This manner of setting up the multiplication by tracing the quantization is not unique, see e.g. \cite{Bieliavsky2021} for a similar construction for a Kohn-Nirenberg type quantization. Combining twisted convolutions with the result of Proposition \ref{prop:quantization_adjoint}, we have all the ingredients we need to set up the desired isomorphism. However, we must first verify that the two spaces indeed are $H^*$-algebras in the first place.
\begin{lemma}\label{lemma:H*_HS}
    The Hilbert space $\mathcal{S}^2$ equipped with the operations of composition and taking the adjoint,
    \begin{align*}
        A,B \,\,\mapsto\,\, A \circ B, \qquad A \,\,\mapsto\,\, A^*,
    \end{align*}
    is an $H^*$-algebra.
\end{lemma}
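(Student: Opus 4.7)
The plan is to verify each of the ten axioms in Definition \ref{def:H_star_alg} for $(\mathcal{S}^2, \circ, {}^*)$ by invoking standard properties of bounded operators and the Hilbert-Schmidt inner product $\langle A, B\rangle_{\mathcal{S}^2} = \tr(AB^*)$. I would organize the verifications into three natural groups rather than treat each axiom in isolation.

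First, the purely algebraic axioms \ref{item:associativity} together with the bilinearity conditions (i)–(ii), and the adjoint properties (iv)–(vii), are immediate consequences of $\mathcal{S}^2$ being an ideal of $B(\mathcal{H})$ on which composition is associative and bilinear, and on which the Banach-space adjoint is conjugate-linear, involutive, and reverses products. Nothing beyond standard facts from operator theory is needed here.

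Second, for the norm axioms (viii)–\ref{item:prod_submultiplicativity}, isometry of the adjoint follows from
\begin{equation*}
    \|A^*\|_{\mathcal{S}^2}^2 = \tr(A^*A) = \tr(AA^*) = \|A\|_{\mathcal{S}^2}^2,
\end{equation*}
and submultiplicativity is the chain
\begin{equation*}
    \|A\circ B\|_{\mathcal{S}^2} \leq \|A\|_{\mathcal{S}^\infty}\|B\|_{\mathcal{S}^2} \leq \|A\|_{\mathcal{S}^2}\|B\|_{\mathcal{S}^2},
\end{equation*}
using the standard inequality $\|A\|_{\mathcal{S}^\infty} \leq \|A\|_{\mathcal{S}^2}$. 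This also confirms that $\mathcal{S}^2$ is closed under composition, so $\circ$ really is an internal operation.

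Third, for the compatibility condition \ref{item:H_adjoint}, I would invoke the cyclicity of the trace together with $(A^*C)^* = C^*A$:
\begin{equation*}
    \langle A\circ B, C\rangle_{\mathcal{S}^2} = \tr\bigl((AB)C^*\bigr) = \tr\bigl(B(C^*A)\bigr) = \tr\bigl(B(A^*C)^*\bigr) = \langle B, A^*\circ C\rangle_{\mathcal{S}^2}.
\end{equation*}
Each trace above is well-defined because $AB, BC^*A \in \mathcal{S}^1$ whenever $A, B, C \in \mathcal{S}^2$. I do not expect any real obstacle here — the lemma is essentially a bookkeeping consolidation of classical facts about Hilbert-Schmidt operators, recorded so that the $\ast$-isomorphism statement in Section \ref{sec:main_iso} has a clean target algebra.
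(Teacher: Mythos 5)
Your proposal is correct and follows essentially the same route as the paper's proof: both reduce the lemma to standard Hilbert--Schmidt facts and single out submultiplicativity (via $\Vert AB \Vert_{\mathcal{S}^2} \leq \Vert A \Vert_{\mathcal{S}^\infty} \Vert B \Vert_{\mathcal{S}^2}$ and $\Vert \cdot \Vert_{\mathcal{S}^\infty} \leq \Vert \cdot \Vert_{\mathcal{S}^2}$) and the adjoint-compatibility axiom (via trace cyclicity, $\tr(ABC^*) = \tr(BC^*A)$) as the only items needing comment. Your write-up is merely a slightly more explicit version of the same argument.
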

\begin{proof}
    All of the conditions of Definition \ref{def:H_star_alg} are standard properties of Hilbert-Schmidt operators with the possible exception of submultiplicativity, item \ref{item:prod_submultiplicativity}, which follows from $\Vert TS \Vert_{\mathcal{S}^2} \leq \Vert T \Vert_{\mathcal{S}^2} \Vert S \Vert_\infty$ and $\Vert S \Vert_\infty \leq \Vert S \Vert_{\mathcal{S}^2}$, and adjoints in inner products which is equivalent to $\tr(ABC^*) = \tr(BC^*A)$ and follows from trace cyclicity.
\end{proof}
\begin{lemma}\label{lemma:H*_L2}
    The Hilbert space $L^2_r(G)$ equipped with the operations of twisted multiplication and complex conjugation,
    \begin{align*}
        f,g \,\,\mapsto\,\, f \sharp g, \qquad f \,\,\mapsto\,\, \overline{f},
    \end{align*}
    is an $H^*$-algebra.
\end{lemma}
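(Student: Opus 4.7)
\begin{proof}[Proof proposal]
The plan is to transport the $H^*$-algebra structure from $\mathcal{S}^2$ (established in Lemma \ref{lemma:H*_HS}) to $L^2_r(G)$ along the quantization isomorphism $A : L^2_r(G) \to \mathcal{S}^2$. The three ingredients that make this transport automatic are: (i) $A$ is a linear unitary isometry (Theorem \ref{theorem:quantization_main}), in particular injective and norm-preserving; (ii) by the very definition \eqref{eq:twisted_multiplication} of $\sharp$ we have $A_{f \sharp g} = A_f A_g$; and (iii) by Proposition \ref{prop:quantization_adjoint} we have $A_{\overline{f}} = A_f^*$. Every axiom of Definition \ref{def:H_star_alg} for $(L^2_r(G), \sharp, \overline{\phantom{f}})$ will be verified by applying $A$ to both sides, invoking the corresponding axiom for $(\mathcal{S}^2, \circ, {}^*)$, and then appealing to injectivity of $A$.

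The linearity axioms and the involution axioms (items (i), (ii), (iv), (v), (vii)) are immediate from linearity of $A$ together with the fact that complex conjugation on $L^2_r(G)$ corresponds under $A$ to the adjoint on $\mathcal{S}^2$. For associativity \ref{item:associativity}, observe
\begin{equation*}
A_{(f\sharp g)\sharp h} = A_{f\sharp g}\, A_h = (A_f A_g) A_h = A_f (A_g A_h) = A_f\, A_{g\sharp h} = A_{f\sharp (g\sharp h)},
\end{equation*}
so $(f\sharp g)\sharp h = f \sharp (g \sharp h)$ by injectivity of $A$. For \ref{item:involution_of_product},
\begin{equation*}
A_{\overline{f \sharp g}} = (A_{f \sharp g})^* = (A_f A_g)^* = A_g^* A_f^* = A_{\overline{g}} A_{\overline{f}} = A_{\overline{g}\sharp \overline{f}},
\end{equation*}
so $\overline{f \sharp g} = \overline{g} \sharp \overline{f}$.

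For the norm conditions, $\Vert \overline{f} \Vert_{L^2_r} = \Vert f \Vert_{L^2_r}$ is trivial, and submultiplicativity \ref{item:prod_submultiplicativity} follows from
\begin{equation*}
\Vert f \sharp g \Vert_{L^2_r} = \Vert A_{f \sharp g}\Vert_{\mathcal{S}^2} = \Vert A_f A_g \Vert_{\mathcal{S}^2} \leq \Vert A_f \Vert_{\mathcal{S}^2}\Vert A_g \Vert_{\mathcal{S}^2} = \Vert f \Vert_{L^2_r}\Vert g \Vert_{L^2_r},
\end{equation*}
using the isometry property of $A$ and submultiplicativity on $\mathcal{S}^2$ from Lemma \ref{lemma:H*_HS}. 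Finally, for the adjointness relation \ref{item:H_adjoint}, the isometry property of $A$ upgrades to preservation of inner products (since $A$ is unitary), so
\begin{equation*}
\langle f \sharp g, h \rangle_{L^2_r} = \langle A_{f\sharp g}, A_h \rangle_{\mathcal{S}^2} = \langle A_f A_g, A_h \rangle_{\mathcal{S}^2} = \langle A_g, A_f^* A_h \rangle_{\mathcal{S}^2} = \langle A_g, A_{\overline{f}\sharp h}\rangle_{\mathcal{S}^2} = \langle g, \overline{f} \sharp h \rangle_{L^2_r}.
\end{equation*}
This verifies every axiom, so $(L^2_r(G), \sharp, \overline{\phantom{f}})$ is an $H^*$-algebra.
\end{proof}

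There is no real obstacle here: the statement is essentially a tautology once Theorem \ref{theorem:quantization_main} and Proposition \ref{prop:quantization_adjoint} are in hand, since $\sharp$ was defined precisely to make $A$ a multiplicative map. The only mild subtlety worth noting explicitly in the proof is that $f \sharp g$ genuinely lies in $L^2_r(G)$, which was already observed just before the statement via the $\mathcal{S}^1 \hookrightarrow \mathcal{S}^2$ inclusion, so that the axioms are verified on the correct space.
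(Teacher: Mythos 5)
Your proof is correct and follows essentially the same route as the paper: both transport the $H^*$-algebra axioms from $(\mathcal{S}^2, \circ, {}^*)$ through the quantization map using $A_{f\sharp g} = A_f A_g$, $A_{\overline{f}} = A_f^*$, and the unitarity of $A$, with identical computations for associativity, submultiplicativity, and the adjointness relation. You are somewhat more explicit about the remaining axioms, which the paper dismisses as routine, but there is no substantive difference.
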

\begin{proof}
    We verify the associativity property \ref{item:associativity} for the twisted multiplication as
    \begin{align*}
        f \sharp (g \sharp h) = a_{A_f A_{g \sharp h}} = a_{A_f A_g A_h} = a_{A_{f \sharp g} A_h} = (f \sharp g) \sharp h.
    \end{align*}
    Meanwhile submultiplicativity \ref{item:prod_submultiplicativity} follows from
    \begin{align*}
        \Vert f \sharp g \Vert_{L^2_r} = \Vert A_{f \sharp g} \Vert_{\mathcal{S}^2} = \Vert A_f A_g \Vert_{\mathcal{S}^2} \leq \Vert A_f \Vert_{\mathcal{S}^2} \Vert A_g \Vert_{\mathcal{S}^2} = \Vert f \Vert_{L^2_r} \Vert g \Vert_{L^2_r}.
    \end{align*}
    Lastly for property \ref{item:H_adjoint} we use that the quantization mapping is unitary, that $\mathcal{S}^2$ is an $H^*$-algebra, Proposition \ref{prop:quantization_adjoint} and equation \eqref{eq:twisted_multiplication} to verify that
    \begin{align*}
        \langle f \sharp g, h \rangle_{L^2_r} = \langle A_f A_g, A_h \rangle_{\mathcal{S}^2} = \langle A_g, A_f^* A_h \rangle_{\mathcal{S}^2} = \langle A_g, A_{\bar{f}} A_h \rangle_{\mathcal{S}^2} = \langle g, \bar{f} \sharp h \rangle_{L^2_r}.
    \end{align*}
    The remaining conditions are easy to verify and are skipped in the interest of brevity.
\end{proof}
We are now essentially done with the work of showing that the mapping is a $\ast$-isomorphism.
\begin{theorem}
    The quantization mapping $A : L^2_r(G) \to \mathcal{S}^2$ as defined by \eqref{eq:quantization_anchor} is an isometric $\ast$-isomorphism of the $H^*$-algebra $(L^2_r(G), \sharp, \overline{ \phantom{a} })$ onto the $H^*$-algebra $(\mathcal{S}^2, \circ, {}^*)$.
\end{theorem}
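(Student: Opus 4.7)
The proof is essentially a matter of assembling the components that have already been verified in the preceding lemmas and propositions; almost no new work remains. The plan is to verify each of the structural properties required of a $\ast$-isomorphism of $H^*$-algebras, citing the relevant results.

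First I would record that both $(L^2_r(G), \sharp, \overline{\phantom{a}})$ and $(\mathcal{S}^2, \circ, {}^*)$ are genuine $H^*$-algebras by Lemmas \ref{lemma:H*_L2} and \ref{lemma:H*_HS}, so there is something for the map to be an isomorphism between. Next, by Theorem \ref{theorem:quantization_main}, the quantization map $A : L^2_r(G) \to \mathcal{S}^2$ is a linear unitary isometry; in particular it is a bijection satisfying $\Vert A_f \Vert_{\mathcal{S}^2} = \Vert f \Vert_{L^2_r}$, which takes care of linearity, bijectivity, and the isometric property.

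It then remains to check that $A$ intertwines the algebraic structures. Multiplicativity $A_{f \sharp g} = A_f A_g$ is immediate from the very definition of twisted multiplication in \eqref{eq:twisted_multiplication}, since $\sharp$ was introduced precisely as $f \sharp g = a_{A_f A_g}$, and applying $A$ to both sides (using $A \circ a = \mathrm{id}$) gives the desired identity. Preservation of the involution $A_{\bar{f}} = A_f^*$ is exactly the content of Proposition \ref{prop:quantization_adjoint}.

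Since no step is more than a citation of an already-proven statement, there is no real obstacle; the only mild subtlety is keeping the direction of the $\ast$-homomorphism consistent (one must check $A_{\bar f} = A_f^*$ rather than its reverse, but involutions are self-inverse so the distinction is notational). I would conclude by simply assembling the four bullet points — linearity, bijective isometry, multiplicativity via \eqref{eq:twisted_multiplication}, and $\ast$-preservation via Proposition \ref{prop:quantization_adjoint} — and noting that these together are the definition of an isometric $\ast$-isomorphism of $H^*$-algebras.
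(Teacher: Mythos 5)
Your proof is correct and follows essentially the same route as the paper: both simply assemble the already-established facts, namely that $A$ is a linear bijective isometry (the paper cites Propositions \ref{prop:FW_isometry} and \ref{prop:FKO_unitary}, you cite Theorem \ref{theorem:quantization_main}, which amounts to the same thing), that $A_{f\sharp g}=A_fA_g$ holds by the definition \eqref{eq:twisted_multiplication}, and that $A_{\bar f}=A_f^*$ is Proposition \ref{prop:quantization_adjoint}. Your additional remark that Lemmas \ref{lemma:H*_HS} and \ref{lemma:H*_L2} are needed to know the two structures are actually $H^*$-algebras is a reasonable bit of extra care that the paper leaves implicit.
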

\begin{proof}
    The mapping being an isometry follows from Proposition \ref{prop:FW_isometry} and Proposition \ref{prop:FKO_unitary}. That $A_f^* = A_{\overline{f}}$ was established in Proposition \ref{prop:quantization_adjoint} and $A_{f \sharp g} = A_f A_g$ is the contents of \eqref{eq:twisted_multiplication}.
\end{proof}

\subsubsection{Intermediate twisted $\ast$-isomorphisms}
While the twisted multiplication in \eqref{eq:twisted_multiplication} was crucial to setting up the $H^*$-algebra $\ast$-isomorphism, there is more structure induced by it available to uncover. Specifically, just as one can use the classical Fourier convolution theorem to define convolutions from multiplications on $L^2(\mathbb{R})$ as $f * g = \mathcal{F}^{-1}(\mathcal{F}(f) \cdot \mathcal{F}(g))$, we can also define an associated \emph{twisted convolution}. The same construction has also been made in the Weyl-Heisenberg setting and again there is an explicit expression for the operation available which we will be unable to obtain in this more general setting. Denoting the twisted convolution by $\natural$, generalizing the convolution theorem means that we need to satisfy
\begin{align}\label{eq:twisted_conv_from_mul}
    \FKO(f \natural g) = \FKO(f) \sharp \FKO(g) \implies f \natural g = \FKO^{-1}\big( \FKO(f) \sharp \FKO(g) \big)
\end{align}
since $\FKO$ is the Fourier transform in our setting and we use $\sharp$ as our multiplication. More explicitly, using $A_f = \FW^{-1}(\FKO^{-1}(f))$ and the definition of twisted convolution, we get that
\begin{align}\nonumber
    f \natural g &= \FKO^{-1}\big[ a_{A_{\FKO(f)} A_{\FKO(g)} } \big]\\\nonumber
    &=\FKO^{-1}\big[ a_{\FW^{-1}(f) \FW^{-1}(g) } \big]\\\nonumber
    &=\FKO^{-1}\big[ \FKO(\FW(  \FW^{-1}(f) \circ \FW^{-1}(g)  )) \big]\\\label{eq:twisted_conv_from_comp}
    &= \FW\big( \FW^{-1}(f) \circ \FW^{-1}(g) \big).
\end{align}
In view of the implication $\FW^{-1}(f \natural g) = \FW^{-1}(f) \circ \FW^{-1}(g)$, we could have obtained the same expression for $\natural$ by simply requiring that the triple $(\FW^{-1},\, \natural,\, \circ)$ should have the same relation as the standard Fourier triple $(\mathcal{F},\, *,\, \cdot)$ gets from the convolution theorem. From this point of view, it becomes clear that our original definition of $\sharp$ was induced by a similar relation but with $(A, \sharp, \circ)$ as the triple where $A : f \mapsto A_f$ is the quantization mapping.

So far we been endowing our quantization procedure $A$ with additional structure. In this section we finish this task by showing that the diagram in Figure \ref{fig:H_cd} commutes.
\begin{figure}[H]
    \centering
    \begin{tikzcd}
        \big(\mathcal{S}^2(\mathcal{H}),\, \circ,\,{}^*\big) \arrow[d, "\FW"] \arrow[rd, dotted, "a"]&\\
        \big(\FW(\mathcal{S}^2),\, \natural,\,\sqrt{\Delta(\cdot)}\, \overline{ \check{\phantom{a}} }\big) \arrow[r,"\FKO"]& \big(L^2_r(G),\, \sharp,\, \overline{ \phantom{a} }\big)
    \end{tikzcd}
    \caption{Commutative diagram showing the twisted multiplication and twisted convolution which correspond to operator compositions.}
    \label{fig:H_cd}
\end{figure}
\noindent
Note that for the intermediate space we need to choose
\begin{align*}
    \FW(\mathcal{S}^2) = \FKO^{-1}(L^2_r(G)) \subset L^2_r(G)
\end{align*}
since this is the space that both $\FW$ and $\FKO^{-1}$ map to, not the full $L^2_r(G)$ space.
Before showing that the mappings really respect involution and composition, we show that the intermediate object actually is an $H^*$-algebra.
\begin{lemma}\label{lemma:intermediate_is_Hstar}
    The triple $\big(\FW(\mathcal{S}^2),\, \natural,\,\sqrt{\Delta(\cdot)}\, \overline{ \check{\phantom{a}} }\big)$ is an $H^*$-algebra.
\end{lemma}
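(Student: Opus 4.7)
The plan is to transport the $H^*$-algebra structure of $(\mathcal{S}^2,\circ,{}^*)$ established in Lemma \ref{lemma:H*_HS} across the unitary bijection $\FW : \mathcal{S}^2 \to \FW(\mathcal{S}^2)$ coming from Proposition \ref{prop:FW_isometry} together with Theorem \ref{thm:projection}. To do this, it suffices to verify the two intertwining identities
\[
\FW(A \circ B) = \FW(A) \,\natural\, \FW(B), \qquad \FW(A^*)(x) = \sqrt{\Delta(x)}\,\overline{\check{\FW(A)}(x)}
\]
for all $A, B \in \mathcal{S}^2$. Once these are in place, each of the ten axioms of Definition \ref{def:H_star_alg} transfers immediately from $(\mathcal{S}^2, \circ, {}^*)$ to $\bigl(\FW(\mathcal{S}^2),\, \natural,\, \sqrt{\Delta(\cdot)}\,\overline{\check{\phantom{a}}}\bigr)$, with norm equalities and inner product equalities following from the unitarity of $\FW$.

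The first intertwining is essentially the construction of $\natural$: starting from \eqref{eq:twisted_conv_from_comp}, I set $f = \FW(A)$ and $g = \FW(B)$ and invoke $\FW^{-1}\FW = \operatorname{id}_{\mathcal{S}^2}$ from Proposition \ref{prop:FW_identity}. This also shows that $\natural$ genuinely takes values in $\FW(\mathcal{S}^2)$, since $A \circ B \in \mathcal{S}^1 \subset \mathcal{S}^2$ whenever $A, B \in \mathcal{S}^2$.

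The second intertwining is the technical core and is already carried out in the proof of Proposition \ref{prop:quantization_adjoint}: for $A \in \mathcal{S}^1\D^{-1} \cap \mathcal{S}^2$, expanding $A$ in a singular value decomposition and using $\D\pi(x) = \sqrt{\Delta(x)}\pi(x)\D$ from Lemma \ref{lemma:haar_measure_properties}, one derives $\FW(A^*)(x) = \sqrt{\Delta(x)}\,\overline{\FW(A)(x^{-1})}$. The extension to all $A \in \mathcal{S}^2$ follows by density using Lemma \ref{lemma:dense_subspace_xd}, once I observe that the candidate involution $f \mapsto \sqrt{\Delta(\cdot)}\,\overline{\check{f}}$ is a conjugate-linear isometry on $L^2_r(G)$: this is immediate from $\mathrm{d}\mu_r(x^{-1}) = \Delta(x)\mathrm{d}\mu_r(x)$ in Lemma \ref{lemma:haar_measure_properties}, and the same identity together with the multiplicativity of $\Delta$ shows that the involution squares to the identity.

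With both intertwinings in hand, the remaining axioms are mechanical. For instance, the $H^*$-adjoint property \ref{item:H_adjoint} is obtained by writing $\langle f\,\natural\, g, h \rangle_{L^2_r} = \langle \FW^{-1}(f) \circ \FW^{-1}(g), \FW^{-1}(h)\rangle_{\mathcal{S}^2}$ and transporting the corresponding identity in $\mathcal{S}^2$; submultiplicativity \ref{item:prod_submultiplicativity} and the involution distributivity \ref{item:involution_distributive}--\ref{item:involution_of_product} follow in the same way. The only step requiring a moment's care is the density argument used to pass from $\mathcal{S}^1\D^{-1} \cap \mathcal{S}^2$ to all of $\mathcal{S}^2$ in the second intertwining, which is the main — and rather mild — obstacle; everything else is a direct consequence of $\FW$ being a unitary isomorphism respecting both algebraic operations.
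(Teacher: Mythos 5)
Your proposal is correct and follows essentially the same strategy as the paper: transporting the $H^*$-algebra structure of $(\mathcal{S}^2,\circ,{}^*)$ to $\FW(\mathcal{S}^2)$ along the isometry $\FW$, with the two intertwining identities doing all the work. The only cosmetic difference is that for the axiom $(f\natural g)^* = g^* \natural f^*$ the paper routes through $\FKO$ and Lemma \ref{lemma:kirillov_complex_conjugate_final} rather than through the identity $\FW(A^*)(x)=\sqrt{\Delta(x)}\,\overline{\FW(A)(x^{-1})}$ extracted from the proof of Proposition \ref{prop:quantization_adjoint}, but both are instances of the same transport of structure around the commutative diagram.
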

\begin{proof}
    Linearity is clear while associativity of $\natural$ can be derived from Lemma \ref{lemma:H*_HS} or \ref{lemma:H*_L2} via \eqref{eq:twisted_conv_from_comp} or \eqref{eq:twisted_conv_from_mul}. That $\sqrt{\Delta(\cdot)}\, \overline{ \check{\phantom{a}}}$ is an involution can be verified manually as
    \begin{align*}
        (f^*)^*(x) = \sqrt{\Delta(x)}\overline{\widecheck{\sqrt{\Delta(\cdot)} \overline{f(\cdot^{-1})}}} = \sqrt{\Delta(x)}\sqrt{\Delta(x^{-1})}\overline{\overline{ f((x^{-1})^{-1}) }} = f(x).
    \end{align*}
    Meanwhile for property \ref{item:involution_of_product}, we can use Lemma \ref{lemma:kirillov_complex_conjugate_final} to verify that, with $f^* = \sqrt{\Delta(\cdot)}\, \overline{ \check{f} }$,
    \begin{align*}
        f^* \natural g^* &= \FKO^{-1} \big( \FKO(f^*) \sharp \FKO(g^*) \big)\\
        &= \FKO^{-1}\big( \overline{\FKO(f)} \sharp \overline{\FKO(g)} \big)\\
        &= \FKO^{-1}\big( \overline{\FKO(g) \sharp \FKO(f)} \big)\\
        &= \sqrt{\Delta(\cdot)} \overline{\widecheck{ \FKO^{-1}( \FKO(g) \sharp \FKO(f) ) }}\\
        &= (g \natural f)^*
    \end{align*}
    as desired. That the involution is an isometry follows from the standard properties of the Haar measure summarized in Lemma \ref{lemma:haar_measure_properties} while submultiplicativity of the twisted convolution can be verified by e.g. pulling back to $\mathcal{S}^2$ using \eqref{eq:twisted_conv_from_comp}. The same holds true for property \ref{item:H_adjoint} by the unitarity of $\FW$.
\end{proof}
All that remains now is to show that each of $\FW$ and $\FKO$ are isometric $\ast$-isomorphisms.
\begin{proposition}
    The mapping $\FW : \big(\mathcal{S}^2,\, \circ,\,{}^*\big) \to \big( \FW(\mathcal{S}^2),\, \natural,\,\sqrt{\Delta(\cdot)}\, \overline{ \check{\phantom{a}} }\big)$ is an isometric $*$-isomorphism between $H^*$ algebras.
\end{proposition}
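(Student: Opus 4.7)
The plan is straightforward: the isometry part is already done in Proposition~\ref{prop:FW_isometry}, so the bulk of the work is verifying that $\FW$ intertwines composition with the twisted convolution $\natural$ and the Hilbert--Schmidt adjoint with the involution $f \mapsto \sqrt{\Delta(\cdot)}\,\overline{\check{f}}$. Bijectivity onto the codomain $\FW(\mathcal{S}^2)$ is tautological and linearity is immediate from Definition~\ref{def:fourier_wigner}.

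For the multiplicativity step, I would simply read off the desired identity from the definition of $\natural$. Equation~\eqref{eq:twisted_conv_from_comp} characterizes the twisted convolution by
\[
f \natural g = \FW\big( \FW^{-1}(f) \circ \FW^{-1}(g) \big).
\]
Plugging in $f = \FW(A)$ and $g = \FW(B)$ and applying Proposition~\ref{prop:FW_identity} (which gives $\FW^{-1} \circ \FW = \mathrm{id}$ on $\mathcal{S}^2$) yields
\[
\FW(A) \natural \FW(B) = \FW(A \circ B),
\]
which is exactly the multiplicative intertwining property.

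For the involution, the work has essentially already been carried out inside the proof of Proposition~\ref{prop:quantization_adjoint}. There, using the singular value decomposition of the operator and the Duflo--Moore identity $\D\pi(x) = \sqrt{\Delta(x)}\pi(x)\D$ from Lemma~\ref{lemma:haar_measure_properties}, it was shown that for operators $A$ with $A\D \in \mathcal{S}^1$ one has
\[
\FW(A^*)(x) = \sqrt{\Delta(x)}\,\overline{\FW(A)(x^{-1})} = \sqrt{\Delta(x)}\,\overline{\widecheck{\FW(A)}(x)}.
\]
For a general $A \in \mathcal{S}^2$ one approximates by such operators in $\mathcal{S}^1\D^{-1} \cap \mathcal{S}^2$ (which is dense by Lemma~\ref{lemma:dense_subspace_xd}) exactly as in the last paragraph of the proof of Proposition~\ref{prop:quantization_adjoint}, using that both $A \mapsto \FW(A^*)$ and $A \mapsto \sqrt{\Delta(\cdot)}\,\overline{\widecheck{\FW(A)}}$ are continuous from $\mathcal{S}^2$ to $L^2_r(G)$.

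With these two algebraic identities established, combined with the isometry from Proposition~\ref{prop:FW_isometry} and the fact that source and target are $H^*$-algebras (Lemma~\ref{lemma:H*_HS} and Lemma~\ref{lemma:intermediate_is_Hstar}), the map $\FW$ is an isometric $*$-isomorphism. I do not anticipate any genuine obstacle here, since the involution on $\FW(\mathcal{S}^2)$ was essentially engineered in Lemma~\ref{lemma:intermediate_is_Hstar} so that the formula from Proposition~\ref{prop:quantization_adjoint} becomes precisely the required intertwining. The main item worth double-checking is that the density/continuity argument used to promote the involution identity from $\mathcal{S}^1\D^{-1} \cap \mathcal{S}^2$ to all of $\mathcal{S}^2$ is valid in the topology of $\FW(\mathcal{S}^2) \subset L^2_r(G)$, but this follows from $\FW$ being an $L^2_r$-isometry together with the obvious $L^2_r$-isometric nature of $f \mapsto \sqrt{\Delta(\cdot)}\,\overline{\check{f}}$ coming from Lemma~\ref{lemma:haar_measure_properties}.
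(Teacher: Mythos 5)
Your proposal is correct and follows essentially the same route as the paper: isometry from Proposition \ref{prop:FW_isometry}, multiplicativity read off from \eqref{eq:twisted_conv_from_comp}, and the involution intertwining $\FW(A^*) = \sqrt{\Delta(\cdot)}\,\overline{\widecheck{\FW(A)}}$ extracted from the computation underlying Proposition \ref{prop:quantization_adjoint} (the paper phrases this via $\FW(S^*) = \FKO^{-1}(\overline{a_S})$ and Lemma \ref{lemma:kirillov_complex_conjugate_final}, which is the same identity). Your extra care about the density/continuity step, justified by $\FW$ being an $L^2_r$-isometry and $f \mapsto \sqrt{\Delta(\cdot)}\,\overline{\check{f}}$ being $L^2_r$-isometric via Lemma \ref{lemma:haar_measure_properties}, is sound and in fact slightly more explicit than the paper's one-line argument.
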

\begin{proof}
    The mapping being an isometry has already been shown in Proposition \ref{prop:FW_isometry} and the $*$-isomorphism property of multiplication was verified in \eqref{eq:twisted_conv_from_comp}. That the mapping respects the respective involutions can be verified by tracing out the diagram to yield
    \begin{align*}
        \FW(S^*) = \FKO^{-1}(\overline{a_S})
    \end{align*}
    from which we get the desired conclusion through Lemma \ref{lemma:kirillov_complex_conjugate_final}.
\end{proof}

\begin{proposition}
    The mapping $\FKO : \big(\FW(\mathcal{S}^2) ,\, \natural,\,\sqrt{\Delta(\cdot)}\, \overline{ \check{\phantom{a}} }\big) \to \big(L^2_r(G),\, \sharp,\, \overline{ \phantom{a} }\big)$ is an isometric $*$-isomorphism between $H^*$ algebras.
\end{proposition}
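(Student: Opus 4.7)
The plan is to verify the three defining properties of an isometric $*$-isomorphism between $H^*$-algebras in turn: being an isometric bijection, intertwining the multiplications, and intertwining the involutions. All three follow from results already at our disposal, so the proof should be short.

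First I would address the isometry and bijectivity. By Proposition \ref{prop:FKO_unitary}, the operator $\FKO^{-1}$ is unitary on $L^2_r(G)$ and satisfies $\FKO\FKO^{-1} = \mathrm{id}$. Restricting $\FKO$ to the range of $\FKO^{-1}$, which by Theorem \ref{thm:projection} equals $\FW(\mathcal{S}^2)$, it follows immediately that $\FKO|_{\FW(\mathcal{S}^2)}$ is the inverse of a unitary and hence an isometric bijection onto $L^2_r(G)$.

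Next I would verify compatibility with the multiplications. This is essentially tautological from the construction of $\natural$: the defining relation \eqref{eq:twisted_conv_from_mul} was precisely $\FKO(f \natural g) = \FKO(f) \sharp \FKO(g)$, so there is nothing more to check beyond noting that both sides lie in $L^2_r(G)$, which is guaranteed by $\FW(\mathcal{S}^2)$ being closed under $\natural$ (already established in Lemma \ref{lemma:intermediate_is_Hstar}).

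Finally I would verify compatibility with the involutions, which asserts
\[
\FKO\!\left(\sqrt{\Delta(\cdot)}\,\overline{\check{f}(\cdot)}\right) = \overline{\FKO(f)}.
\]
This is obtained from Lemma \ref{lemma:kirillov_complex_conjugate_final} by substituting $f \mapsto \overline{f}$: the lemma gives $\FKO(\sqrt{\Delta(\cdot)}\,\check{g}(\cdot)) = \overline{\FKO(\overline{g})}$, and taking $g = \overline{f}$ yields exactly the identity above after noting that $\check{\overline{f}} = \overline{\check{f}}$ and that complex conjugation is an involution. There is no real obstacle here; the main content of the proposition was absorbed into Lemma \ref{lemma:kirillov_complex_conjugate_final} and Proposition \ref{prop:FKO_unitary}, and the current statement is essentially a repackaging that closes the commutative diagram of Figure \ref{fig:H_cd}.
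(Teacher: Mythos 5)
Your proposal is correct and follows essentially the same route as the paper's own (very brief) proof: isometry from Proposition \ref{prop:FKO_unitary}, compatibility with the multiplications from the defining relation \eqref{eq:twisted_conv_from_mul}, and compatibility with the involutions from Lemma \ref{lemma:kirillov_complex_conjugate_final}. Your version merely spells out the substitution $f \mapsto \overline{f}$ in that lemma, which the paper leaves implicit.
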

\begin{proof}
    The mapping being an isometry follows from Proposition \ref{prop:FKO_unitary} and the $*$-isometry properties follow from Lemma \ref{lemma:kirillov_complex_conjugate_final} and \eqref{eq:twisted_conv_from_mul}.
\end{proof}
Combining these two propositions and Lemma \ref{lemma:intermediate_is_Hstar} with the results of Section \ref{sec:main_iso} allows us to conclude that the diagram in Figure \ref{fig:H_cd} indeed does commute.

\subsection{Wigner distributions}\label{sec:wigner}
As we saw in Section \ref{sec:prelim_weyl}, defining a Wigner distribution and setting up a quantization scheme are really two sides of the same coin. In this article, we take the view that it is the quantization that induces the Wigner distribution, meaning that we will define
\begin{align*}
    W(\psi, \phi)(x) := a_{\psi \otimes \phi}(x) = \FKO(\FW(\psi \otimes \phi))(x)
\end{align*}
for $\psi, \phi \in \mathcal{H}$ and $x \in G$. As we will see shortly, this definition is equivalent to $\langle f, W(\phi, \psi) \rangle_{L^2_r} = \langle A_f \psi, \phi \rangle_\mathcal{H}$ and so should be expected. Before moving on to properties of this function, we expand this definition and the associated inverse to get as close as possible to a computable formula. Using the result of Proposition \ref{prop:fourier_wigner_rank_one}, the fact that $\FW$ can be extended to all of $\mathcal{S}^2$ using Proposition \ref{prop:FW_isometry} and the definition of the Fourier-Kirillov transform \eqref{eq:FKO_def}, we get
\begin{align*}
    W(\psi, \phi)(x) &= \FKO(\FW(\psi \otimes \phi))(x)\\
    &= \frac{1}{\sqrt{|\mathrm{Pf}_F| \cdot \Delta(x)}} \int_{G} \mathcal{W}_{\D \phi} \psi(y) e^{2\pi i \langle K(x^{-1}) F, \log(y) \rangle} \frac{1}{\sqrt{\Theta(\log(y))}}\dmr(y).
\end{align*}
The inverse operation, going from the Wigner distribution to the rank-one operator $\psi \otimes \phi$, can also be written out explicitly as
\begin{align}\nonumber
    \psi \otimes \phi &= \FW^{-1} (\FKO^{-1}(W(\psi, \phi)))\\\label{eq:wigner_inversion}
    &= \FW^{-1}\left( \sqrt{\Theta(\log(\cdot))}\int_{G} W(\psi, \phi)(x) e^{-2\pi i \langle K(x^{-1}) F, \log(\cdot)\rangle}\frac{1}{ \sqrt{|\mathrm{Pf}_F| \cdot \Delta(x)}}\dmr(x) \right)\\\nonumber
    &= \int_{G}\int_{G}  \sqrt{\Theta(\log(y^{-1}))} W(\psi, \phi)(x) e^{-2\pi i \langle K(x^{-1}) F, \log(y^{-1})\rangle} \frac{1}{ \sqrt{|\mathrm{Pf}_F |\cdot \Delta(x)}}\pi(y)\D\dmr(x)\dmr(y).
\end{align}

Next we establish some of the basic properties of this function. 
\begin{proposition}\label{prop:wigner_sesquilinear}
    The mapping $(\psi, \phi) \mapsto W(\psi, \phi)$ is linear in the first argument and anti-linear in the second argument.
\end{proposition}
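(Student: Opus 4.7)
The plan is to reduce the sesquilinearity statement about $W$ to sesquilinearity of the rank-one operator construction $(\psi,\phi) \mapsto \psi \otimes \phi$, which is essentially immediate from the definition.

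First, I would recall that the rank-one operator is defined by $(\psi \otimes \phi)(\xi) = \langle \xi, \phi \rangle \psi$. Viewed as a map $\mathcal{H} \times \mathcal{H} \to \mathcal{S}^2(\mathcal{H})$, this is linear in $\psi$ (because $\psi$ appears on the outside) and anti-linear in $\phi$ (because $\langle \xi, \phi \rangle$ is conjugate-linear in the second slot of the Hilbert-space inner product). I would verify this in one line for each slot, e.g.\ $(\lambda_1 \psi_1 + \lambda_2 \psi_2) \otimes \phi = \lambda_1 (\psi_1 \otimes \phi) + \lambda_2 (\psi_2 \otimes \phi)$ and $\psi \otimes (\lambda_1 \phi_1 + \lambda_2 \phi_2) = \overline{\lambda_1}(\psi \otimes \phi_1) + \overline{\lambda_2}(\psi \otimes \phi_2)$.

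Next I would invoke the linearity of the dequantization map. By definition $W(\psi,\phi) = a_{\psi \otimes \phi} = \FKO(\FW(\psi \otimes \phi))$. Both $\FW$ and $\FKO$ are linear: this was established in Proposition \ref{prop:FW_isometry} and Proposition \ref{prop:FKO_unitary}, and was explicitly noted as part of Theorem \ref{theorem:quantization_main}. Hence the composition $a = \FKO \circ \FW$ is a linear map from $\mathcal{S}^2(\mathcal{H})$ to $L^2_r(G)$, and the sesquilinearity of $(\psi,\phi) \mapsto \psi \otimes \phi$ transfers intact to $(\psi,\phi) \mapsto W(\psi,\phi)$.

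There is no real obstacle here; the statement is a formal consequence of how $W$ is defined as a composition of a sesquilinear map with linear maps. The only thing to be careful about is the convention for anti-linearity of the second slot of the rank-one tensor, which must be matched against the convention for the inner product on $\mathcal{H}$ used throughout the paper; once that is fixed consistently, the proof is a one-liner.
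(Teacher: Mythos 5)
Your proposal is correct and follows exactly the paper's own argument: reduce sesquilinearity of $W$ to the sesquilinearity of $(\psi,\phi)\mapsto\psi\otimes\phi$ together with the linearity of the dequantization map $a$ from Theorem \ref{theorem:quantization_main}. Nothing is missing.
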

\begin{proof}
    Using that $W(\psi, \phi) = a_{\psi \otimes \phi}$, we can see that this holds because $(\psi, \phi) \mapsto \psi \otimes \phi$ is linear in the first argument and antilinear in the second argument while $a : \mathcal{S}^2 \to L^2_r(G)$ is linear by Theorem \ref{theorem:quantization_main}.
\end{proof}

\begin{proposition}\label{prop:wigner_props}
    Let $\psi, \phi \in \mathcal{H}$, then the Wigner distribution $W(\psi, \phi)$ belongs to $L^2_r(G)$ and satisfies the weak relation
    \begin{align}\label{eq:wigner_weak_quant}
        \big\langle f, W(\phi, \psi) \big\rangle_{L^2_r} = \langle A_f \psi, \phi \rangle_{\mathcal{H}}
    \end{align}
    for $f \in L^2_r(G)$, as well as the identities
    \begin{align*}
        \overline{W(\psi, \phi)(x)} &= W(\phi, \psi)(x),\\
        R_x W(\psi, \phi) &= W(\pi(x)\psi, \pi(x)\phi).
    \end{align*}
\end{proposition}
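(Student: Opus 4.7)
The strategy is to derive all four statements from the foundational identity $W(\psi,\phi) = a_{\psi\otimes\phi}$ together with the three structural results we have already proved: Theorem \ref{theorem:quantization_main} (that $A$ and $a$ are inverse unitary isometries between $L^2_r(G)$ and $\mathcal{S}^2$), Proposition \ref{prop:quantization_adjoint} (that $A_f^* = A_{\bar f}$, equivalently $\overline{a_S} = a_{S^*}$), and Proposition \ref{prop:quantization_translation} (covariance under $\pi$). None of the four statements require a fresh computation on the integral representation of $W$; each should drop out by moving the assertion to the operator side and invoking an earlier result.

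For the $L^2_r$-membership, since $\psi \otimes \phi$ is a bounded rank-one operator it belongs to $\mathcal{S}^2$, and Theorem \ref{theorem:quantization_main} immediately places $a_{\psi\otimes\phi} = W(\psi,\phi)$ in $L^2_r(G)$. For the weak relation, I would use that $a = A^{-1} = A^*$ (again by Theorem \ref{theorem:quantization_main}) to compute
\begin{align*}
    \langle f, W(\phi,\psi)\rangle_{L^2_r} = \langle f, a_{\phi\otimes\psi}\rangle_{L^2_r} = \langle A_f, \phi\otimes\psi\rangle_{\mathcal{S}^2},
\end{align*}
and then expand $\langle A_f, \phi\otimes\psi\rangle_{\mathcal{S}^2} = \tr(A_f(\phi\otimes\psi)^*) = \tr(A_f(\psi\otimes\phi))$. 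A one-line rank-one computation $A_f(\psi\otimes\phi) = (A_f\psi)\otimes\phi$ finishes the identity with $\tr((A_f\psi)\otimes\phi) = \langle A_f\psi,\phi\rangle_{\mathcal H}$.

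The conjugation identity $\overline{W(\psi,\phi)} = W(\phi,\psi)$ follows by applying Proposition \ref{prop:quantization_adjoint} in its dequantized form $\overline{a_S} = a_{S^*}$ to $S = \psi\otimes\phi$, since $(\psi\otimes\phi)^* = \phi\otimes\psi$. For covariance, I would first rewrite Proposition \ref{prop:quantization_translation} as $A_{R_x f} = \pi(x) A_f \pi(x)^*$ by swapping $x \mapsto x^{-1}$ and using unitarity of $\pi$. Applying this with $f = W(\psi,\phi)$ yields
\begin{align*}
    A_{R_x W(\psi,\phi)} = \pi(x)(\psi\otimes\phi)\pi(x)^* = (\pi(x)\psi)\otimes(\pi(x)\phi) = A_{W(\pi(x)\psi,\,\pi(x)\phi)},
\end{align*}
where the middle equality is another short rank-one calculation. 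Injectivity of $A$ (from Theorem \ref{theorem:quantization_main}) then removes the quantization and gives the claim.

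I do not expect any real obstacle here: the hard analytic work has been absorbed into the earlier propositions, and what remains are purely algebraic manipulations of rank-one operators. The only point requiring minor care is keeping the rank-one convention $(\psi\otimes\phi)\xi = \langle \xi,\phi\rangle\psi$ consistent with the adjoint $(\psi\otimes\phi)^* = \phi\otimes\psi$ and with the Hilbert--Schmidt inner product $\langle A,B\rangle_{\mathcal S^2} = \tr(AB^*)$, so that the conjugations on the two sides of the weak relation land on the intended argument of $W$.
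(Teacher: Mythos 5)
Your proposal is correct and follows essentially the same route as the paper: both reduce every claim to the operator side via $W(\psi,\phi)=a_{\psi\otimes\phi}$, use unitarity of $a$ for the weak relation, Proposition \ref{prop:quantization_adjoint} for conjugation, and Proposition \ref{prop:quantization_translation} for covariance. The only cosmetic difference is that you apply the adjoint identity directly in its dequantized form $\overline{a_S}=a_{S^*}$ where the paper tests $\overline{W(\psi,\phi)}$ weakly against an arbitrary $f$; the content is identical.
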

\begin{proof}
    We apply quantization to both sides of \eqref{eq:wigner_weak_quant} and use the definition of inner products in the space of Hilbert-Schmidt operators and the definition of rank-one operators to get
    \begin{align*}
        \langle f, W(\phi, \psi)\rangle_{L^2_r} &= \langle A_f, \phi \otimes \psi\rangle_{\mathcal{S}^2}\\
        &= \sum_n \langle A_f (\psi \otimes \phi)\xi_n, \xi_n \rangle_\mathcal{H}\\
        &= \sum_n \langle \xi_n, \phi \rangle \langle A_f \psi, \xi_n \rangle_\mathcal{H}\\
        &= \langle A_f \psi, \phi \rangle_\mathcal{H}.
    \end{align*}
    For the first of the two identities, we let $f \in L^2_r(G)$ be arbitrary and look at the weak action of $W(\psi, \phi)$ to conclude
    \begin{align*}
        \langle f, W(\psi, \phi) \rangle_{L^2_r} &= \langle A_f \phi, \psi \rangle_\mathcal{H}\\
        &= \overline{\langle A_{\bar{f}} \psi, \phi \rangle_\mathcal{H}}\\
        &= \big\langle f, \overline{W(\psi, \phi)} \big\rangle_{L^2_r}.
    \end{align*}
    Since $f$ was arbitrary, it follows that $W(\psi, \phi) = \overline{W(\phi, \psi)}$.

    Lastly, we compute the quantization of $R_x W(\psi, \phi)$ using Proposition \ref{prop:quantization_translation} as
    \begin{align*}
        R_x W(\psi, \phi) = R_x a_{\psi \otimes \phi} = a_{\pi(x^{-1})^* (\psi \otimes \phi) \pi(x^{-1})} = a_{\pi(x) \psi \otimes \pi(x)\phi} = W(\pi(x)\psi, \pi(x) \phi)
    \end{align*}
    which completes the proof.
\end{proof}
The weak expression for the Wigner distribution \eqref{eq:wigner_weak_quant} shows that our quantization procedure does not preserve positivity for the same reason as in the Weyl-Heisenberg case: For a non-negative $f$, $\langle A_f \psi, \psi \rangle = \langle f, W(\psi) \rangle$ can be negative at a point if $W(\psi)$ is negative which is the case for most $\psi$ in the Weyl-Heisenberg case \cite{grochenig2013foundations, Grchenig2019}.

By letting $f$ be another Wigner distribution in \eqref{eq:wigner_weak_quant} we immediately get the following generalization of Theorem \ref{theorem:moyal_wigner}.
\begin{proposition}\label{prop:wigner_orthog}
    Let $\psi_1, \phi_1, \psi_2, \phi_2 \in \mathcal{H}$, then
    \begin{align*}
        \big\langle W(\psi_1, \phi_1), W(\psi_2, \phi_2) \big\rangle_{L^2_r} = \langle \psi_1, \psi_2\rangle_\mathcal{H} \overline{\langle \phi_1, \phi_2 \rangle_\mathcal{H}}.
    \end{align*}
\end{proposition}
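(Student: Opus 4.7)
The plan is to apply the weak quantization formula from Proposition \ref{prop:wigner_props} directly, substituting a Wigner distribution in the role of $f$. Specifically, identity \eqref{eq:wigner_weak_quant} states that
\[
    \big\langle f, W(\phi, \psi)\big\rangle_{L^2_r} = \langle A_f \psi, \phi\rangle_\mathcal{H}
\]
for every $f \in L^2_r(G)$ and every $\psi, \phi \in \mathcal{H}$. The key observation is that since $W(\psi_1, \phi_1) = a_{\psi_1 \otimes \phi_1}$ by definition and quantization is the inverse of dequantization by Theorem \ref{theorem:quantization_main}, we have $A_{W(\psi_1, \phi_1)} = \psi_1 \otimes \phi_1$. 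Hence choosing $f = W(\psi_1, \phi_1)$, $\phi = \psi_2$, and $\psi = \phi_2$ in \eqref{eq:wigner_weak_quant} is exactly what produces the left-hand side of the claimed identity.

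Carrying this out, I would write
\[
    \big\langle W(\psi_1, \phi_1), W(\psi_2, \phi_2)\big\rangle_{L^2_r} = \big\langle (\psi_1 \otimes \phi_1)\phi_2,\, \psi_2\big\rangle_\mathcal{H}
\]
and then simply expand the rank-one operator as $(\psi_1 \otimes \phi_1)\phi_2 = \langle \phi_2, \phi_1\rangle_\mathcal{H}\, \psi_1$ to get
\[
    \big\langle W(\psi_1, \phi_1), W(\psi_2, \phi_2)\big\rangle_{L^2_r} = \langle \phi_2, \phi_1\rangle_\mathcal{H}\, \langle \psi_1, \psi_2\rangle_\mathcal{H} = \langle \psi_1, \psi_2\rangle_\mathcal{H} \,\overline{\langle \phi_1, \phi_2\rangle_\mathcal{H}},
\]
which is precisely the required Moyal-type identity.

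There is essentially no obstacle here: the entire content of the proposition is a two-line calculation once \eqref{eq:wigner_weak_quant} is in hand. All the real work has already been done, first in establishing that $A$ is a unitary isometry from $L^2_r(G)$ onto $\mathcal{S}^2(\mathcal{H})$ (Theorem \ref{theorem:quantization_main}), and then in deducing the weak relation in Proposition \ref{prop:wigner_props}. An equivalent route, which I would mention as a sanity check rather than as the main proof, is to invoke the unitary isometry of the dequantization directly: $\langle W(\psi_1, \phi_1), W(\psi_2, \phi_2)\rangle_{L^2_r} = \langle a_{\psi_1 \otimes \phi_1}, a_{\psi_2 \otimes \phi_2}\rangle_{L^2_r} = \langle \psi_1 \otimes \phi_1, \psi_2 \otimes \phi_2\rangle_{\mathcal{S}^2}$, and then use the standard Hilbert--Schmidt inner product of rank-one operators, $\langle \psi_1 \otimes \phi_1, \psi_2 \otimes \phi_2\rangle_{\mathcal{S}^2} = \langle \psi_1, \psi_2\rangle_\mathcal{H}\,\overline{\langle \phi_1, \phi_2\rangle_\mathcal{H}}$, to conclude.
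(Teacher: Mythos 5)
Your proof is correct and matches the paper's argument exactly: the paper obtains this proposition by letting $f$ be another Wigner distribution in the weak relation \eqref{eq:wigner_weak_quant}, which is precisely your main computation. The alternative route via unitarity of the dequantization that you mention as a sanity check is also valid but is not the one the paper uses.
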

From the above proposition it is easy to see that for an orthonormal sequence $(\phi_n)_n$, the sequence $\big(W(\phi_n, \phi_m)\big)_{n,m}$ is also orthonormal. In fact, the property of being complete is also preserved through this mapping which generalizes the results \cite[Prop.~188]{deGosson2011} and \cite[Lem.~7.4]{Berge2022affine}.
\begin{corollary}\label{cor:orth_basis_from_onb}
    Let $(\phi_n)_n$ be an orthonormal basis for $\mathcal{H}$, then the sequence $(W(\phi_n, \phi_m))_{n,m}$ is an orthonormal basis for $L^2_r(G)$.
\end{corollary}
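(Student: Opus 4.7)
The plan is to reduce the statement to the known fact that an orthonormal basis of $\mathcal{H}$ induces an orthonormal basis of $\mathcal{S}^2(\mathcal{H})$ via rank-one operators, and then transport this through the dequantization map.

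First I would observe that by definition $W(\phi_n, \phi_m) = a_{\phi_n \otimes \phi_m}$, so the sequence $(W(\phi_n, \phi_m))_{n,m}$ is exactly the image of the sequence of rank-one operators $(\phi_n \otimes \phi_m)_{n,m}$ under the dequantization map $a : \mathcal{S}^2(\mathcal{H}) \to L^2_r(G)$. By Theorem \ref{theorem:quantization_main}, the map $a$ is a linear unitary isometry, so it sends orthonormal bases to orthonormal bases. Thus it suffices to verify that $(\phi_n \otimes \phi_m)_{n,m}$ is an orthonormal basis of $\mathcal{S}^2(\mathcal{H})$.

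Orthonormality is immediate from a direct computation of Hilbert-Schmidt inner products: $\langle \phi_n \otimes \phi_m, \phi_k \otimes \phi_l\rangle_{\mathcal{S}^2} = \langle \phi_n, \phi_k\rangle_\mathcal{H}\, \overline{\langle \phi_m, \phi_l\rangle_\mathcal{H}} = \delta_{nk}\delta_{ml}$. Alternatively, one can skip this step entirely and invoke Proposition \ref{prop:wigner_orthog} directly on the target side to obtain orthonormality of $(W(\phi_n, \phi_m))_{n,m}$.

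For completeness, suppose $A \in \mathcal{S}^2$ is orthogonal to every $\phi_n \otimes \phi_m$. Then $\langle A, \phi_n \otimes \phi_m\rangle_{\mathcal{S}^2} = \langle A \phi_m, \phi_n\rangle_\mathcal{H} = 0$ for all $n, m$, and since $(\phi_n)_n$ is an ONB this forces $A\phi_m = 0$ for every $m$, hence $A = 0$. So $(\phi_n \otimes \phi_m)_{n,m}$ is a complete orthonormal system in $\mathcal{S}^2$. Applying the unitary map $a$ then yields that $(W(\phi_n, \phi_m))_{n,m}$ is an orthonormal basis for $L^2_r(G)$. No step looks like a genuine obstacle here; the result is essentially a direct transport of a standard Hilbert-Schmidt fact through the unitarity of dequantization established in Theorem \ref{theorem:quantization_main}.
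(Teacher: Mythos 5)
Your proof is correct and is essentially the paper's argument: the paper likewise gets orthonormality from Proposition \ref{prop:wigner_orthog} and proves completeness by showing that $\langle f, W(\phi_n,\phi_m)\rangle_{L^2_r} = \langle A_f\phi_m,\phi_n\rangle_{\mathcal H} = 0$ for all $n,m$ forces $A_f = 0$, which is the same computation you perform on the $\mathcal{S}^2$ side before transporting through the unitary map $a$. The only difference is the order in which the unitarity of quantization is invoked, which is cosmetic.
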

\begin{proof}
    As remarked above, preserving orthonormality follows directly from Proposition \ref{prop:wigner_orthog}. For completeness, suppose $f \in L^2_r(G)$ is orthogonal to $W(\phi_n, \phi_m)$ for all $n$ and $m$. Then by \eqref{eq:wigner_weak_quant}, we also have that
    \begin{align*}
        0 = \langle f, W(\phi_n, \phi_m) \rangle_{L^2_r} = \langle A_f \phi_m, \phi_n \rangle_{\mathcal{H}}
    \end{align*}
    for all $n,m$. However, since $(\phi_n)_n$ is an orthonormal basis for $\mathcal{H}$, this means that $A_f$ must be the zero operator which in turn implies that $f$ is the zero function, finishing the proof.
\end{proof}
From Proposition \ref{prop:wigner_orthog} we also get a proof that the Wigner distribution is uniquely determined by the window $\psi$ up to a unimodular constant. This is well known in the Weyl-Heisenberg case, see e.g. \cite[Prop.~1.98]{folland1989harmonic}. Our proof will follow that for the affine Wigner distribution in \cite[Cor.~3.3]{Berge2022affine}.
\begin{proposition}\label{prop:unique_wigner}
    Let $\psi, \phi \in \mathcal{H}$. Then $W(\psi) = W(\phi)$ if and only if $\psi = c \cdot \phi$ with $|c| = 1$.
\end{proposition}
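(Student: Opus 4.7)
The plan is to exploit the orthogonality relation in Proposition \ref{prop:wigner_orthog} together with the sesquilinearity from Proposition \ref{prop:wigner_sesquilinear}. The ``if'' direction is essentially immediate: if $\psi = c \phi$ with $|c| = 1$, sesquilinearity gives $W(\psi) = W(c\phi, c\phi) = c \bar{c} W(\phi, \phi) = |c|^2 W(\phi) = W(\phi)$.

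For the ``only if'' direction, I would first test $W(\psi)$ against itself in the Moyal-type identity of Proposition \ref{prop:wigner_orthog} to obtain $\|W(\psi)\|_{L^2_r}^2 = \|\psi\|_\mathcal{H}^4$, and likewise $\|W(\phi)\|_{L^2_r}^2 = \|\phi\|_\mathcal{H}^4$. The assumption $W(\psi) = W(\phi)$ therefore yields $\|\psi\|_\mathcal{H} = \|\phi\|_\mathcal{H}$.

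Next, computing the cross inner product with Proposition \ref{prop:wigner_orthog} gives
\begin{equation*}
    \big\langle W(\psi), W(\phi) \big\rangle_{L^2_r} = \langle \psi, \phi \rangle_\mathcal{H} \overline{\langle \psi, \phi \rangle_\mathcal{H}} = |\langle \psi, \phi \rangle_\mathcal{H}|^2.
\end{equation*}
Since $W(\psi) = W(\phi)$, the left-hand side equals $\|W(\psi)\|_{L^2_r}^2 = \|\psi\|_\mathcal{H}^2 \|\phi\|_\mathcal{H}^2$ (using the norm equality just established). Hence the Cauchy--Schwarz inequality in $\mathcal{H}$ saturates, forcing $\psi$ and $\phi$ to be linearly dependent, i.e.\ $\psi = c \phi$ for some scalar $c \in \mathbb{C}$ (assuming $\phi \neq 0$; the case $\phi = 0$ forces $\psi = 0$ trivially). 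Combined with $\|\psi\|_\mathcal{H} = \|\phi\|_\mathcal{H}$, this yields $|c| = 1$.

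No step here poses a real obstacle, since the heavy lifting was already done when establishing the Moyal identity in Proposition \ref{prop:wigner_orthog}; the mild care required is just to handle the degenerate case $\phi = 0$ separately and to note that the saturation of Cauchy--Schwarz in a Hilbert space gives genuine scalar proportionality.
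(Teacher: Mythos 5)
Your proof is correct and follows essentially the same route as the paper: both directions rest on the Moyal identity of Proposition \ref{prop:wigner_orthog}, the chain $|\langle\psi,\phi\rangle|^2 = \langle W(\psi),W(\phi)\rangle = \Vert\psi\Vert^2\Vert\phi\Vert^2$, and the saturation of Cauchy--Schwarz. Your treatment is in fact slightly more careful than the paper's, since you explicitly extract $|c|=1$ from the norm equality and handle the degenerate case $\phi=0$.
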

\begin{proof}
    If $\psi = c \cdot \phi$, the result is clear as $c\phi \otimes c \phi = \phi \otimes \phi$ if $|c| = 1$. For the other direction, note that 
    \begin{align*}
        \big|\langle \psi, \phi \rangle_\mathcal{H}\big|^2 = \big\langle W(\psi), W(\phi) \big\rangle_{L^2_r} = \underbrace{\Vert W(\psi) \Vert_{L^2_r}^2}_{=\Vert \psi \Vert_{\mathcal{H}}^4} = \underbrace{\Vert W(\phi) \Vert_{L^2_r}^2}_{= \Vert \phi \Vert_{\mathcal{H}}^4} = \Vert \psi \Vert_\mathcal{H}^2 \Vert \phi \Vert_\mathcal{H}^2.
    \end{align*}
    Consequently, if we apply Cauchy-Schwarz to the left-hand side, we must have equality which implies that $\psi$ and $\phi$ are collinear.
\end{proof}
We can also deduce the following fact about the sums of Wigner distributions.
\begin{proposition}\label{prop:weak_I}
    Let $(\phi_k)_k$ be an orthonormal basis of $\mathcal{H}$, then $A_{\sum_k^n W(\phi_k)}$ converges weakly to $I_\mathcal{H}$ as $n \to \infty$.
\end{proposition}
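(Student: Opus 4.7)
The plan is to reduce the weak-operator convergence to a computation involving only inner products in $\mathcal{H}$ by exploiting the weak relation between quantization and Wigner distributions from Proposition \ref{prop:wigner_props}, combined with the Moyal-type orthogonality in Proposition \ref{prop:wigner_orthog}.

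First, fix arbitrary $\psi, \phi \in \mathcal{H}$. Using \eqref{eq:wigner_weak_quant}, linearity of the quantization map (Theorem \ref{theorem:quantization_main}), and the fact that $W(\phi,\psi)$ lies in $L^2_r(G)$, I would write
\begin{equation*}
    \Big\langle A_{\sum_{k=1}^n W(\phi_k)} \psi, \phi \Big\rangle_{\mathcal{H}} = \sum_{k=1}^n \big\langle W(\phi_k), W(\phi, \psi)\big\rangle_{L^2_r}.
\end{equation*}

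Next, I would apply Proposition \ref{prop:wigner_orthog} to each term with $\psi_1 = \phi_1 = \phi_k$, $\psi_2 = \phi$, $\phi_2 = \psi$, giving
\begin{equation*}
    \big\langle W(\phi_k), W(\phi, \psi)\big\rangle_{L^2_r} = \langle \phi_k, \phi \rangle_{\mathcal{H}} \,\overline{\langle \phi_k, \psi \rangle_{\mathcal{H}}} = \langle \psi, \phi_k \rangle_{\mathcal{H}} \,\langle \phi_k, \phi \rangle_{\mathcal{H}}.
\end{equation*}
Summing and letting $n \to \infty$, the right-hand side is a partial sum of the Parseval expansion of $\langle \psi, \phi\rangle_{\mathcal{H}}$ in the orthonormal basis $(\phi_k)_k$, so it converges to $\langle \psi, \phi\rangle_{\mathcal{H}} = \langle I_{\mathcal{H}} \psi, \phi \rangle_{\mathcal{H}}$. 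Since $\psi, \phi$ were arbitrary, this gives weak convergence of $A_{\sum_{k=1}^n W(\phi_k)}$ to $I_{\mathcal{H}}$.

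There is no substantive obstacle here: the argument is a direct chain of prior results, with the only subtlety being to ensure that the weak relation \eqref{eq:wigner_weak_quant} applies to the finite partial sums $\sum_{k=1}^n W(\phi_k)$, which it does since each $W(\phi_k) \in L^2_r(G)$ and the relation is linear in its first argument.
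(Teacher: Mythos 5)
Your proposal is correct and follows essentially the same route as the paper's own proof: apply the weak relation \eqref{eq:wigner_weak_quant} together with linearity of quantization, evaluate each term via the orthogonality relation of Proposition \ref{prop:wigner_orthog}, and recognize the resulting sum as the Parseval expansion of $\langle \psi, \phi\rangle_{\mathcal{H}}$. No issues.
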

\begin{proof}
    This ends up being a straight-forward verification. Indeed, for any $\psi, \phi \in \mathcal{H}$,
    \begin{align*}
        \lim_{n \to \infty} \big\langle A_{\sum_k^n W(\phi_k)} \psi, \phi \big\rangle &= \lim_{n\to\infty} \sum_{k=1}^n \big\langle W(\phi_k), W(\phi, \psi) \big\rangle\\
        &= \sum_{k=1}^\infty \langle \phi_k, \phi \rangle \overline{\langle \phi_k, \psi \rangle} = \langle \psi, \phi \rangle
    \end{align*}
    where we used the linearity of quantization, Proposition \ref{prop:wigner_props} and Proposition \ref{prop:wigner_orthog}.
\end{proof}
Lastly we return to the discussion on positivity and note that the following characterization of positivity is immediate from Proposition \ref{prop:wigner_props}.
\begin{proposition}
    An operator $S \in \mathcal{S}^2$ is positive if and only if
    \begin{align*}
        \int_G a_S(x) W(\psi)(x) \dmr(x) \geq 0\qquad \text{ for all }\psi \in \mathcal{H}.
    \end{align*}
\end{proposition}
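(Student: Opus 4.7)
The proof should follow almost directly from the weak quantization relation \eqref{eq:wigner_weak_quant} in Proposition \ref{prop:wigner_props}, combined with the conjugation identity $\overline{W(\psi)} = W(\psi)$ from the same proposition. The plan is to specialize both in the correct way and then invoke the standard characterization of positivity of a Hilbert--Schmidt operator.

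First, I would recall that $S \in \mathcal{S}^2$ is positive if and only if $\langle S\psi, \psi \rangle_\mathcal{H} \geq 0$ for every $\psi \in \mathcal{H}$ (this also automatically forces $S$ to be self-adjoint). Next, I would set $f = a_S$ in \eqref{eq:wigner_weak_quant} so that $A_f = S$ by Theorem \ref{theorem:quantization_main}, and take $\phi = \psi$. This yields
\begin{equation*}
    \big\langle a_S, W(\psi) \big\rangle_{L^2_r} = \langle S\psi, \psi \rangle_\mathcal{H}.
\end{equation*}

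The key remaining observation is that Proposition \ref{prop:wigner_props} gives $\overline{W(\psi,\psi)} = W(\psi,\psi)$, so $W(\psi)$ is a real-valued function on $G$. Therefore the inner product on the left-hand side simplifies to the ordinary integral, i.e.,
\begin{equation*}
    \big\langle a_S, W(\psi) \big\rangle_{L^2_r} = \int_G a_S(x) \overline{W(\psi)(x)} \dmr(x) = \int_G a_S(x) W(\psi)(x) \dmr(x).
\end{equation*}

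Combining these two identities, I would conclude that the condition $\int_G a_S(x) W(\psi)(x)\dmr(x) \geq 0$ holding for every $\psi \in \mathcal{H}$ is literally the same as $\langle S\psi,\psi\rangle_\mathcal{H} \geq 0$ for every $\psi$, which is positivity of $S$. There is no real obstacle here; the entire content of the proposition is already packaged in \eqref{eq:wigner_weak_quant} and the reality of $W(\psi)$, so the proof is essentially a two-line substitution.
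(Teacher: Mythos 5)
Your proof is correct and is precisely the argument the paper has in mind: the paper states the proposition is ``immediate from Proposition \ref{prop:wigner_props}'' without writing out details, and your two steps --- specializing the weak relation \eqref{eq:wigner_weak_quant} with $f = a_S$, $\phi = \psi$, and using $\overline{W(\psi)} = W(\psi)$ to drop the conjugate in the $L^2_r$ inner product --- fill in exactly what is being left implicit. Nothing is missing.
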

Note that in the Weyl-Heisenberg case, Wigner distributions $W(\psi)$ are not positive everywhere in general \cite{hudson1974wigner} so positivity of an operator is not equivalent to $a_S \geq 0$.


\subsection{Operator convolutions from quantum harmonic analysis}\label{sec:operator_convolution_quantization}
Our interest in the quantization map partly stems from its interaction with the operator convolutions from quantum harmonic analysis, discussed in Section \ref{sec:prelim_qha}. In this section we will show that the convolution-quantization relations outlined in Section \ref{sec:prelim_qha_weyl_relation} can be generalized beyond the Weyl-Heisenberg group. Corresponding results were earlier shown for affine quantization in  \cite[Sec.~3.2]{Berge2022}.

First we show that quantization respects function-operator convolutions in a nice manner.
\begin{proposition}\label{prop:conv_quantization}
    Let $f \in L^1_r(G)$ and $g \in L^2_r(G)$. Then
    $$
    A_{f * g} = f \star A_g.
    $$
\end{proposition}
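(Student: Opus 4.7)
The plan is to reduce the identity $A_{f*g} = f \star A_g$ to two facts already established: the translation covariance of quantization (Proposition \ref{prop:quantization_translation}) and the linearity/continuity of the quantization map (Theorem \ref{theorem:quantization_main}). No new Fourier-analytic computation should be needed.

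First, I would unpack the left-hand side using the convention from Section \ref{sec:prelim_aqha},
\begin{align*}
    f \star A_g = \int_G f(x)\, \pi(x)^* A_g\, \pi(x) \dmr(x),
\end{align*}
interpreted as a Bochner integral in $\mathcal{S}^2$. Convergence is immediate because $\Vert \pi(x)^* A_g \pi(x) \Vert_{\mathcal{S}^2} = \Vert A_g \Vert_{\mathcal{S}^2}$ (unitarity of $\pi(x)$), so the integrand has $L^1_r$-norm at most $\Vert f \Vert_{L^1_r} \Vert A_g \Vert_{\mathcal{S}^2}$.

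Next, I would apply Proposition \ref{prop:quantization_translation} pointwise, which gives $\pi(x)^* A_g \pi(x) = A_{R_{x^{-1}} g}$ for each $x \in G$. Since $A : L^2_r(G) \to \mathcal{S}^2$ is a bounded linear isometry (Theorem \ref{theorem:quantization_main}), it commutes with Bochner integrals, so
\begin{align*}
    f \star A_g = \int_G f(x)\, A_{R_{x^{-1}} g} \dmr(x) = A_{h}, \qquad h(y) := \int_G f(x)\, (R_{x^{-1}} g)(y) \dmr(x).
\end{align*}
It then only remains to identify the function $h$. Evaluating the right-translation explicitly,
\begin{align*}
    h(y) = \int_G f(x)\, g(y x^{-1})\dmr(x) = (f * g)(y),
\end{align*}
which is precisely the definition of right-convolution from Section \ref{sec:prelim_rep_theory}. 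Membership of $f * g$ in $L^2_r(G)$ follows from Young's inequality on locally compact groups since $f \in L^1_r(G)$ and $g \in L^2_r(G)$, so the equation $A_{f*g} = f \star A_g$ is meaningful and holds.

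The only point requiring any care is the commutation of $A$ with the Bochner integral; this is standard once one has boundedness of $A$, so I do not expect a real obstacle. The entire proof is essentially a one-line consequence of Proposition \ref{prop:quantization_translation} combined with the definition of the right convolution.
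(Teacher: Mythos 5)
Your proof is correct and is essentially the paper's argument run in the opposite direction: the paper pushes the (bounded linear) dequantization $a$ inside the Bochner integral defining $f \star A_g$ and identifies the result with $f*g$ via Proposition \ref{prop:quantization_translation}, whereas you apply that proposition pointwise and pull the quantization $A$ out of the integral. The two ingredients --- translation covariance and commuting a bounded linear map with a convergent Bochner integral --- are identical, so this is the same proof up to which side of the identity one starts from.
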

\begin{proof}
We explicitly compute the dequantization $a_{f \star A_g}$ and show that it is equal to $f * g$ which suffices by injectivity. To move the dequantization inside the integral defining $f \star A_g$, we use that bounded operators commute with convergent Bochner integrals (see \cite{folland2016course} or \cite[Prop.~2.4]{Luef2018}) which yields
\begin{align*}
    a_{f \star A_g}(x) &= \FKO\left( \FW\left( \int_G f(y) \pi(y)^* A_g \pi(y)\dmr(y) \right) \right)(x)\\
    &= \int_G f(y) \FKO\big( \FW ( \pi(y)^* A_g \pi(y))\big)(x) \dmr(y)\\
    &= \int_G f(y) R_{x^{-1}} g(y) \dmr(y)\\
    &= \int_G f(y) g(x y^{-1})\dmr(y) = f * g
\end{align*}
where we used Proposition \ref{prop:quantization_translation} for the translation.
\end{proof}
For operator-operator convolutions we will need to keep track of an involution on one of the functions in the same way as for affine quantization \cite[Prop.~3.7]{Berge2022}.
\begin{proposition}\label{prop:op_op_quantization}
    Let $f,g \in L^2_r(G)$, then
    $$
    A_f \star A_g = f * \check{g}.
    $$
\end{proposition}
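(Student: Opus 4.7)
The plan is to unwind the definition of the operator-operator convolution and reduce everything to a Hilbert--Schmidt inner product of operators, which by the isometry property of quantization (Theorem \ref{theorem:quantization_main}) becomes an inner product of functions. Starting from the definition in Section \ref{sec:prelim_aqha},
\begin{align*}
    (A_f \star A_g)(x) = \tr\bigl(A_f\, \pi(x)^* A_g\, \pi(x)\bigr),
\end{align*}
which is well-defined pointwise because $A_f, A_g \in \mathcal{S}^2$ (Theorem \ref{theorem:quantization_main}) and hence the product lies in $\mathcal{S}^1$.

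The first substantive step is to apply Proposition \ref{prop:quantization_translation} to the inner conjugation, giving $\pi(x)^* A_g \pi(x) = A_{R_{x^{-1}} g}$, so that
\begin{align*}
    (A_f \star A_g)(x) = \tr\bigl(A_f\, A_{R_{x^{-1}} g}\bigr).
\end{align*}
Using the standard identification $\tr(ST) = \langle S, T^* \rangle_{\mathcal{S}^2}$ for Hilbert--Schmidt operators and then Proposition \ref{prop:quantization_adjoint} to move the adjoint inside the quantization, this becomes
\begin{align*}
    (A_f \star A_g)(x) = \bigl\langle A_f,\, A_{\overline{R_{x^{-1}} g}}\bigr\rangle_{\mathcal{S}^2}.
\end{align*}

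The final step is to invoke the isometry statement of Theorem \ref{theorem:quantization_main} to transfer the inner product to $L^2_r(G)$, yielding
\begin{align*}
    (A_f \star A_g)(x) = \bigl\langle f,\, \overline{R_{x^{-1}} g}\bigr\rangle_{L^2_r} = \int_G f(y)\, g(y x^{-1})\dmr(y),
\end{align*}
and then to recognize the right-hand side as $f * \check{g}(x)$ using the convolution convention from the preliminaries together with $\check{g}(xy^{-1}) = g(yx^{-1})$.

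I do not expect any serious obstacle here; the main thing to be careful about is bookkeeping of conventions: the direction of translation in Proposition \ref{prop:quantization_translation} (so that $R_{x^{-1}}$ and not $R_x$ appears), the convention $\langle S, T\rangle_{\mathcal{S}^2} = \tr(ST^*)$ which justifies the appearance of the adjoint, and the identity $f * \check{g}(x) = \int_G f(y) g(yx^{-1})\dmr(y)$ under the right-Haar convolution. Once these are aligned the proof is essentially a three-line chain of equalities, mirroring the pattern used for Proposition \ref{prop:conv_quantization}.
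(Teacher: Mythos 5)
Your proof is correct and follows essentially the same route as the paper: reduce the trace to a Hilbert--Schmidt inner product, use Propositions \ref{prop:quantization_translation} and \ref{prop:quantization_adjoint} to pull the conjugation and adjoint inside the quantization, and then transfer to $L^2_r(G)$ by unitarity. The only difference is the order in which you apply the translation and adjoint steps, which is immaterial since $\overline{R_{x^{-1}}g} = R_{x^{-1}}\bar{g}$.
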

\begin{proof}
    We compute
    \begin{align*}
        (A_f \star A_g)(x) &= \tr\big(A_f \pi(x)^* A_g \pi(x)\big)\\
        &= \big\langle A_f, \pi(x)^* A_{\bar{g}} \pi(x) \big\rangle_{\mathcal{S}^2}\\
        &= \big\langle f, R_{x^{-1}}\bar{g} \big\rangle_{L^2_r}\\
        &= \int_G f(y) \check{g}(xy^{-1})\dmr(y) = f * \check{g}(x)
    \end{align*}
    where we used that $A_f^* = A_{\bar{f}}$ from Proposition \ref{prop:quantization_adjoint}.
\end{proof}
The rank-one realization of this result is of independent interest and implies a relation for the Wigner distribution which is well known in the Weyl-Heisenberg case.
\begin{corollary}\label{cor:wigner_convolution_wavelet}
    Let $\psi_1, \psi_2, \phi_1, \phi_2 \in \mathcal{H}$, then
    \begin{align*}
        W(\psi_1, \psi_2) * \widecheck{W(\phi_2, \phi_1)}(x) = \mathcal{W}_{\phi_1} \psi_1(x) \overline{\mathcal{W}_{\phi_2} \psi_2(x)}.     
    \end{align*}
    In particular,
    \begin{align*}
        W(\psi) * \widecheck{W(\phi)}(x) = |\mathcal{W}_{\phi} \psi(x)|^2.
    \end{align*}
\end{corollary}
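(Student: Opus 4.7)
The plan is to apply Proposition \ref{prop:op_op_quantization} with $f = W(\psi_1, \psi_2)$ and $g = W(\phi_2, \phi_1)$. Since by definition $W(\psi, \phi) = a_{\psi \otimes \phi}$, we immediately have $A_{W(\psi_1, \psi_2)} = \psi_1 \otimes \psi_2$ and $A_{W(\phi_2, \phi_1)} = \phi_2 \otimes \phi_1$, so the proposition reduces the left-hand side to the operator-operator convolution of two rank-one operators:
\begin{align*}
    W(\psi_1, \psi_2) * \widecheck{W(\phi_2, \phi_1)}(x) = (\psi_1 \otimes \psi_2) \star (\phi_2 \otimes \phi_1)(x).
\end{align*}

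The remaining work is a direct computation using the trace formula $T \star S(x) = \tr(T \pi(x)^* S \pi(x))$ recalled in Section \ref{sec:prelim_aqha}. The key algebraic step is the rank-one identity $(\phi_2 \otimes \phi_1)\pi(x) = \phi_2 \otimes \pi(x)^*\phi_1$, together with $\pi(x)^*(\phi_2 \otimes \pi(x)^*\phi_1) = \pi(x)^*\phi_2 \otimes \pi(x)^*\phi_1$. Composing with $\psi_1 \otimes \psi_2$ on the left and taking the trace then gives
\begin{align*}
    (\psi_1 \otimes \psi_2) \star (\phi_2 \otimes \phi_1)(x) &= \langle \pi(x)^*\phi_2, \psi_2\rangle\, \langle \psi_1, \pi(x)^*\phi_1 \rangle\\
    &= \mathcal{W}_{\phi_1}\psi_1(x)\, \overline{\mathcal{W}_{\phi_2}\psi_2(x)},
\end{align*}
using the definition \eqref{eq:wavelet_def} of the wavelet transform.

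The special case $W(\psi) * \widecheck{W(\phi)}(x) = |\mathcal{W}_\phi \psi(x)|^2$ then follows by setting $\psi_1 = \psi_2 = \psi$ and $\phi_1 = \phi_2 = \phi$. I do not expect a serious obstacle here; the proof is essentially a formal manipulation once Proposition \ref{prop:op_op_quantization} is invoked, and all the required identities (rank-one actions, trace cyclicity, wavelet transform definition) are standard. The main thing to be careful about is keeping the order of $\phi_1, \phi_2$ consistent, which is why the statement uses $W(\phi_2, \phi_1)$ on the left but $\mathcal{W}_{\phi_1}\psi_1$ and $\mathcal{W}_{\phi_2}\psi_2$ on the right.
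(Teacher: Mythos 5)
Your proposal is correct and follows essentially the same route as the paper: invoke Proposition \ref{prop:op_op_quantization} to reduce the left-hand side to $(\psi_1 \otimes \psi_2) \star (\phi_2 \otimes \phi_1)(x)$ and then evaluate the trace, the only cosmetic difference being that the paper expands the trace over an orthonormal basis while you use rank-one composition identities. Both yield $\langle \psi_1, \pi(x)^*\phi_1\rangle\,\overline{\langle \psi_2, \pi(x)^*\phi_2\rangle}$, so the argument is complete.
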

\begin{proof}
    Since $A_{W(\psi_1, \psi_2)} = \psi_1 \otimes \psi_2$, we can use Proposition \ref{prop:op_op_quantization} to write the convolution as
    \begin{align*}
        W(\psi_1, \psi_2) * \widecheck{W(\phi_2, \phi_1)}(x) &= (\psi_1 \otimes \psi_2) \star (\phi_2 \otimes \phi_1)(x)\\
        &= \tr\big((\psi_1 \otimes \psi_2) \pi(x)^* (\phi_2 \otimes \phi_1) \pi(x)\big)\\
        &= \sum_n \langle \pi(x)^*(\phi_2 \otimes \phi_1) \pi(x) e_n, \psi_2 \rangle \langle \psi_1, e_n \rangle\\
        &= \sum_n \langle e_n, \pi(x)^*\phi_1 \rangle \langle \phi_2, \pi(x)\psi_2 \rangle \langle \psi_1, e_n  \rangle\\
        &= \langle \psi_1, \pi(x)^* \phi_1 \rangle \overline{\langle \psi_2, \pi(x)^* \phi_2 \rangle} = \mathcal{W}_{\phi_1}\psi_1(x) \overline{\mathcal{W}_{\phi_2} \psi_2(x)}
    \end{align*}
    which is what we wished to show.
\end{proof}

By combining Proposition \ref{prop:op_op_quantization} and Proposition \ref{prop:conv_quantization} we can also get an expression for $A_{T \star S}$. However, to generalize the classical result for the Weyl-Heisenberg group we need a notion of parity conjugating an operator. Since we don't have a parity operator in the general case, we cannot approach this issue head-on but can instead define $\check{S}$ using the quantization $a_S$ of $S$ so that it generalizes the classical property $A_{\check{a_S}} = \check{S}$ \cite[Lem.~3.2 (ii)]{Luef2018}. An immediate consequence of this definition is that $a_{\check{S}} = \check{a_S}$. We also get the following corollary from the earlier results. 
\begin{corollary}
    Let $T, S \in \mathcal{S}^2$ be such that $a_T \in L^1_r(G)$, then
    \begin{align*}
        A_{T \star S} = a_T \star A_{\check{a_S}} = a_T \star \check{S}.
    \end{align*}
\end{corollary}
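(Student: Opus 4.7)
The corollary is a direct composition of the two previous propositions, so the plan is essentially bookkeeping. Since $a : \mathcal{S}^2 \to L^2_r(G)$ is a bijective isometry by Theorem \ref{theorem:quantization_main}, we may write $T = A_{a_T}$ and $S = A_{a_S}$. Applying Proposition \ref{prop:op_op_quantization} with $f = a_T$ and $g = a_S$ then yields
\begin{equation*}
    T \star S \,=\, A_{a_T} \star A_{a_S} \,=\, a_T * \check{a_S}.
\end{equation*}

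The second step is to apply Proposition \ref{prop:conv_quantization} to the right-hand side. The hypothesis of that proposition requires the first factor to lie in $L^1_r(G)$, which is exactly the assumption $a_T \in L^1_r(G)$ that has been placed on $T$. The second factor should lie in $L^2_r(G)$; here I would briefly note that $\check{a_S} \in L^2_r(G)$ follows from $a_S \in L^2_r(G)$ together with Lemma \ref{lemma:haar_measure_properties}\,(iii) (via the substitution $y = x^{-1}$), or in any case is implicit in the conclusion of Proposition \ref{prop:op_op_quantization} whose right-hand side must be meaningful for the identity just used. Applying Proposition \ref{prop:conv_quantization} with $f = a_T$ and $g = \check{a_S}$ gives
\begin{equation*}
    A_{a_T * \check{a_S}} \,=\, a_T \star A_{\check{a_S}}.
\end{equation*}

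Chaining these two identities together gives $A_{T \star S} = a_T \star A_{\check{a_S}}$, which is the first equality in the statement. The second equality $A_{\check{a_S}} = \check{S}$ is nothing but the definition of $\check{S}$ introduced in the paragraph immediately preceding the corollary, so no further argument is required.

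The only nontrivial point is making sure the intermediate objects lie in the function spaces where the two feeder propositions apply; there is no computational obstacle, just the need to verify that the involution $a_S \mapsto \check{a_S}$ preserves $L^2_r(G)$ in our possibly non-unimodular setting. That aside, the proof is just a two-line diagram chase.
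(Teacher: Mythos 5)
Your proof is correct and follows exactly the same route as the paper: write $T = A_{a_T}$, $S = A_{a_S}$, apply Proposition \ref{prop:op_op_quantization} and then Proposition \ref{prop:conv_quantization}, and invoke the definition of $\check{S}$ for the last equality. The only quibble is that your justification of $\check{a_S} \in L^2_r(G)$ via Lemma \ref{lemma:haar_measure_properties}\,(iii) actually gives $\Vert \check{a_S} \Vert_{L^2_r} = \Vert a_S \Vert_{L^2_l}$ rather than $\Vert a_S \Vert_{L^2_r}$ in the non-unimodular case, but the paper glosses over the same point and your fallback (that the expression is already meaningful from Proposition \ref{prop:op_op_quantization}) matches the paper's implicit stance.
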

\begin{proof}
    By first using that $T = A_{a_T}$, $S = A_{a_S}$ and then applying Proposition \ref{prop:op_op_quantization} followed by Proposition \ref{prop:conv_quantization}, we get
    \begin{align*}
        A_{T \star S} = A_{A_{a_T} \star A_{a_S}} = A_{a_T * \check{a_S}} = a_T \star A_{\check{a_S}} = a_T \star \check{S}
    \end{align*}
    since this is how we defined $\check{S}$.
\end{proof}

\subsection{Unimodular trace formula}
In 2000, Du and Wong \cite{Du2000} were the first to show that for Weyl quantization on the Weyl-Heisenberg group, $\tr(A_f) = \int_{\R^{2d}} f(x)\,\mathrm{d}x$ which is called a \emph{trace formula}, see \cite[Prop.~286]{deGosson2011} and \cite[Cor.~6.2.1]{Luef2018} for other proofs. This result was generalized to the affine group in \cite[Prop.~4.14]{Berge2022} in two ways,
\begin{align}\label{eq:affine_trace_formula}
    \int_G f(x)\dmr(x) = \tr(A_f),\qquad \int_G f(x)\dml(x) = \tr(\D^{-1} A_f \D^{-1}).   
\end{align}
We will show the corresponding result under the assumption that the group is unimodular but remark that \eqref{eq:affine_trace_formula} is likely to hold in the general case as well.

\begin{theorem}\label{theorem:budget_trace_integral_relation}
    Assume that the underlying group is unimodular and let $f \in L^1(G)$ be such that $A_f \in \mathcal{S}^1$, then
    \begin{align}\label{eq:unimod_int_eq_tr}
        \frac{1}{\sqrt{\mathrm{Pf}_F}}\int_G f(x)\dm(x) = \tr(A_f).
    \end{align}
\end{theorem}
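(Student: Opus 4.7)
The plan is to evaluate the identity $\FW(A_f) = \FKO^{-1}(f)$ at the group identity $e \in G$. This identity is a direct consequence of the definition $A_f = \FW^{-1}(\FKO^{-1}(f))$ combined with Theorem \ref{thm:projection}, since $\FKO^{-1}(f) \in \FKO^{-1}(L^2_r(G))$ is exactly the subspace on which $\FW \FW^{-1}$ acts as the identity. The hypothesis $A_f \in \mathcal{S}^1$ ensures that $A_f \D$ extends to a trace class operator, so $\FW(A_f)$ is defined pointwise by the formula in Definition \ref{def:fourier_wigner}.

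Evaluating the left hand side at $e$, since $\pi(e) = I$, gives
\[
\FW(A_f)(e) = \tr(A_f \D \pi(e)) = \tr(A_f \D).
\]
Evaluating the right hand side at $e$ via \eqref{eq:inverse_FKO}, and using that $\log(e) = 0$ (so $\Theta(\log e) = 1$ and the exponential phase is $1$) together with unimodularity ($\Delta \equiv 1$ and $\dmr = \dml = \dm$), reduces to
\[
\FKO^{-1}(f)(e) = \frac{1}{\sqrt{|\mathrm{Pf}_F|}}\int_G f(y)\dm(y).
\]
Equating the two sides yields $\tr(A_f \D) = \frac{1}{\sqrt{|\mathrm{Pf}_F|}} \int_G f\,\dm$.

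To pass from $\tr(A_f \D)$ to $\tr(A_f)$ one uses that $G$ is unimodular: by Lemma \ref{lemma:haar_measure_properties}(iv), $\D \pi(x) = \pi(x)\D$ for every $x \in G$, so by Schur's lemma applied to the irreducible representation $\pi$, $\D$ must be a positive scalar multiple of the identity, $\D = \lambda I$. The value $\lambda$ is determined by the chosen normalization of the Haar measure. In our setting, that normalization is pinned down by Lemma \ref{lem:symplectic form}, which fixes $\dmr$ so that its pushforward to the orbit matches the Liouville volume $|\mathrm{Pf}_F|\,dY$; this is precisely the Kirillov-orbit normalization under which the Duflo-Moore constant becomes $\lambda = 1$, so that $\tr(A_f\D) = \tr(A_f)$ and the claimed formula follows.

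The main obstacle is the last step, verifying that $\lambda = 1$ under this specific normalization. A clean way to carry this out is to test the Duflo-Moore orthogonality (Theorem \ref{theorem:duflo-moore}) on a convenient admissible pair $\phi, \psi$ and compare with the isometry statement of Proposition \ref{prop:FW_isometry} (using Proposition \ref{prop:fourier_wigner_rank_one}); since both are isometries with the \emph{same} normalizing Haar measure, consistency forces $\lambda = 1$. Alternatively one can verify $\D = I$ case by case on the unimodular instances of Section \ref{sec:group_examples_prelim}. The rest of the argument is a straightforward evaluation of two integrals at the identity.
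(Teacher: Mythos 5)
Your overall strategy---evaluating the $L^2$-identity $\FW(A_f) = \FKO^{-1}(f)$ at the identity element---is essentially the paper's, which carries out the same evaluation through the chain $\FKO^{-1}(f)(e) = \FKO^{-1}(\FKO(\FW(A_f)))(e) = \FW(\FW^{-1}(\FW(A_f)))(e) = \FW(A_f)(e)$ using Lemma \ref{lemma:invert_FKO_and_FW}. You are right, and indeed more careful than the paper's own write-up, in observing that Definition \ref{def:fourier_wigner} gives $\FW(A_f)(e)=\tr(A_f\D)$ rather than $\tr(A_f)$, so that the conclusion still requires $\D=I$. You also omit the paper's second step, which first proves the formula for continuous compactly supported $f$ and then extends to general $f$ via $\int_G A_f\star A_g\dm=\tr(A_f)\tr(A_g)$ and Proposition \ref{prop:op_op_quantization}; your direct evaluation can replace that step, but only if you say explicitly that under your hypotheses both sides of the identity are continuous and agree almost everywhere, hence agree at $e$.

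The genuine gap is the final step, $\lambda=1$. Schur's lemma does give $\D=\lambda I$ for unimodular $G$, but your proposed verification---testing the Duflo--Moore orthogonality of Theorem \ref{theorem:duflo-moore} against the isometry of Proposition \ref{prop:FW_isometry} (via Proposition \ref{prop:fourier_wigner_rank_one})---is circular: Proposition \ref{prop:FW_isometry} is \emph{derived from} Theorem \ref{theorem:duflo-moore} and holds verbatim for every value of $\lambda$, since the operator $\D^{-1}$ enters both sides of the comparison in the same way. Consistency between the two therefore cannot determine $\lambda$. Note also that the quantity $\tr(\D\,\pi(\check f)\,\D)$ appearing in Lemma \ref{lemma:invert_FKO_and_FW} is invariant under rescaling the Haar measure (rescaling by $c$ multiplies $\pi(\check f)$ by $c$ and $\D$ by $c^{-1/2}$), so the evaluation at $e$ by itself only yields $\frac{1}{\sqrt{\mathrm{Pf}_F}}\int_G f\dm=\tr(A_f\D)=\lambda\tr(A_f)$ for whatever $\lambda$ the normalization produces. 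Pinning down $\lambda=1$ requires an actual computation of the formal degree of $\pi_F$ under the specific normalization fixed in Lemma \ref{lem:symplectic form}; neither of your two suggestions accomplishes this, and checking the unimodular examples of Section \ref{sec:group_examples_prelim} case by case is not a proof of the general statement.
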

\begin{proof}
    We will begin by showing the first relation in the special case where $f$ is continuous with compact support. Looking at the definition of the inverse Fourier-Kirillov transform \eqref{eq:inverse_FKO}, we see that the integral can then be identified with $\FKO^{-1}(f)(e)$. That it is okay to evaluate $\FKO^{-1}(f)$ at a point follows from the Riemann-Lebesgue lemma and that the mappings in Proposition \ref{prop:FKO_unitary} are well behaved when the group is unimodular. Since $f \in L^2(G)$ we can also identify $f$ with $a_{A_f} = \FKO(\FW(A_f))$. Now by applying Lemma \ref{lemma:invert_FKO_and_FW}, we get
    \begin{align*}
        \frac{1}{\sqrt{\mathrm{Pf}_F}}\int_G f(x)\dm(x) &= \FKO^{-1}(\FKO(\FW(A_f)))(e)\\
        &=\FW(\FW^{-1}(\FW(A_f)))(e)\\
        &=\FW(A_f)(e)\\
        &= \tr(A_f)
    \end{align*}
    where we used the continuity of $\FW(A_f)$ afforded by Proposition \ref{prop:fw_riemann_lebesgue} in the last step.

    To drop the assumption of continuity and compact support of $f$, fix a continuous function $g$ with compact support. From \eqref{eq:qha_op_op_conv} in Section \ref{sec:prelim_qha} and Proposition \ref{prop:op_op_quantization} above, we then get that
    \begin{align*}
        \tr(A_f) \tr(A_g) &= \int_G A_f \star A_g(x)\dm(x)\\
        &=\int_G f * \check{g} (x) \dm(x)\\
        &=\int_G f(x) \dm(x) \int_G \check{g}(x) \dm(x) = \int_G f(x) \dm(x) \int_G g(x) \dm(x)
    \end{align*}
    where we in the last step used the unimodularity of the group. Now dividing by $\tr(A_g)$ on both sides and using that $\tr(A_g) = \frac{1}{\sqrt{\mathrm{Pf}_F}}\int_G g(x)\dm(x)$, we get the desired equality.
\end{proof}
\begin{remark}
    One immediate consequence of this result is that for a Wigner distribution $W(\psi, \phi)$,
    \begin{align*}
        \frac{1}{\sqrt{\mathrm{Pf}_F}}\int_G W(\psi, \phi)(x)\dm(x) = \langle \psi, \phi \rangle.
    \end{align*}
\end{remark}
With the above result, we can obtain a briefer (albeit less elementary) proof of \cite[Thm.~4.4.6 (a)]{grochenig2013foundations} and generalize it to all unimodular groups.
\begin{corollary}
    Assume that the underlying group is unimodular and suppose that $f \in L^2(G) \cap L^1(G)$ is such that $A_f \in \mathcal{S}^1$ and $A_f$ is a positive operator, then
    $$
    \Vert f \Vert_{L^2(G)} \leq \frac{1}{\sqrt{\mathrm{Pf}_F}}\int_{G} f(x)\dm(x).
    $$
\end{corollary}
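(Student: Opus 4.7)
The plan is to transfer the inequality from the group side to the operator side using the two main identities already available, and then reduce it to a simple inequality about eigenvalues of positive trace class operators.

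First I would apply the unimodular trace formula of Theorem \ref{theorem:budget_trace_integral_relation} to replace the right-hand side, rewriting the claim as
\begin{equation*}
    \|f\|_{L^2(G)} \leq \tr(A_f).
\end{equation*}
Next, since the group is unimodular we have $L^2(G) = L^2_r(G)$, and by Theorem \ref{theorem:quantization_main} the quantization map $A$ is a unitary isometry from $L^2_r(G)$ onto $\mathcal{S}^2(\mathcal{H})$, so $\|f\|_{L^2(G)} = \|A_f\|_{\mathcal{S}^2}$. The inequality therefore reduces to
\begin{equation*}
    \|A_f\|_{\mathcal{S}^2} \leq \tr(A_f),
\end{equation*}
which must be established using positivity of $A_f$.

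The remaining step is purely operator-theoretic. Since $A_f$ is a positive trace class operator, the spectral theorem gives a decomposition $A_f = \sum_n s_n (\phi_n \otimes \phi_n)$ with nonnegative eigenvalues $s_n \geq 0$ and $(\phi_n)$ orthonormal. Then $\tr(A_f) = \sum_n s_n$ and $\|A_f\|_{\mathcal{S}^2}^2 = \sum_n s_n^2$. The elementary inequality $\sum_n s_n^2 \leq \bigl(\sum_n s_n\bigr)^2$, valid for nonnegative sequences, then finishes the proof. I do not expect any real obstacle here: the content of the corollary is essentially that an $\ell^2$ norm is dominated by an $\ell^1$ norm, and all the bridges between group-theoretic quantities and operator-theoretic ones have already been set up in the theorem and in the isometry property of $A$.
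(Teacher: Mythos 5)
Your proposal is correct and follows essentially the same route as the paper's own proof: both invoke the unimodular trace formula, the isometry of the quantization map, and the spectral decomposition of the positive trace-class operator $A_f$ to reduce everything to the embedding $\ell^1 \subseteq \ell^2$ for the nonnegative eigenvalue sequence. No gaps.
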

\begin{proof}
    Since $A_f \in \mathcal{S}^1$ is a positive operator, its singular value decomposition is of the form $A_f = \sum_{n} \lambda_n (\phi_n \otimes \phi_n)$ where $(\phi_n)_n$ is some orthonormal basis of $\mathcal{H}$ and all $\lambda_n$ are non-negative. By Theorem \ref{theorem:budget_trace_integral_relation}, we can write
    \begin{align*}
        \frac{1}{\sqrt{\mathrm{Pf}_F}}\int_G f(x)\dm(x) = \tr(A_f) = \sum_n \lambda_n = \big\Vert (\lambda_n)_n \big\Vert_{\ell^1} \geq \big\Vert (\lambda_n)_n \big\Vert_{\ell^2} = \Vert A_f \Vert_{\mathcal{S}^2} = \Vert f \Vert_{L^2(G)}
    \end{align*}
    where we used the embedding $\ell^1 \subseteq \ell^2$ and that quantization is an isometry.
\end{proof}
This result implies another immediate corollary.
\begin{corollary}
    Suppose that $0 \neq f \in L^2(G) \cap L^1(G)$ is non-positive everywhere and such that $A_f \in \mathcal{S}^1$, then $A_f$ cannot be a positive operator.
\end{corollary}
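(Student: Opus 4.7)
The plan is to derive a contradiction by directly applying the previous corollary. Assume for contradiction that $A_f$ is a positive operator. Then, since $f \in L^2(G) \cap L^1(G)$ and $A_f \in \mathcal{S}^1$, all the hypotheses of the previous corollary are satisfied and we obtain the bound
\begin{equation*}
    \Vert f \Vert_{L^2(G)} \leq \frac{1}{\sqrt{\mathrm{Pf}_F}}\int_{G} f(x)\dm(x).
\end{equation*}

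Next I would use the sign assumption on $f$. Since $f(x) \leq 0$ pointwise almost everywhere, the integral $\int_G f(x)\dm(x)$ is nonpositive. On the other hand, the assumption $f \neq 0$ forces $\Vert f \Vert_{L^2(G)} > 0$. Combining these with the displayed inequality yields $0 < \Vert f \Vert_{L^2(G)} \leq 0$, which is the desired contradiction. Hence $A_f$ cannot be positive.

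There is really no obstacle here; the result is a clean consequence of the preceding inequality. The only thing worth being careful about is confirming that the hypotheses of the previous corollary (namely $f \in L^2(G) \cap L^1(G)$ and $A_f \in \mathcal{S}^1$) are exactly those assumed here, so the corollary applies verbatim under the contradiction hypothesis. Since $f$ is assumed nonzero and nonpositive, the sign argument finishes the proof in one line.
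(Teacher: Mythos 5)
Your proof is correct and is exactly the argument the paper intends: the paper states this corollary as an immediate consequence of the preceding bound $\Vert f \Vert_{L^2(G)} \leq \frac{1}{\sqrt{\mathrm{Pf}_F}}\int_G f\dm$ and gives no further proof, and your contradiction via the sign of the integral is the one-line deduction being alluded to. The hypotheses do match verbatim, so nothing further is needed.
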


\section{Applications}\label{sec:applications}
Classical Weyl quantization has historically proved to be a valuable tool to tackle a wide variety of problems. In this section, we set out to show how Weyl quantization on exponential groups can be used to treat analog problems. Specifically we will show three results which all have counterparts with the classical Weyl transform and the Weyl-Heisenberg group but for which the generalizations are novel and one proof which is simplified by the tools developed in this article.

\subsection{Wavelet phase retrieval}\label{sec:phase_retrieval}
The problem of \emph{phase retrieval} in the context of time-frequency analysis may be summarized as inverting the mapping
\begin{align}\label{eq:phase_ret_ex}
    |\mathcal{W}_\phi|^2 : \psi \mapsto |\langle \psi, \pi(\cdot)^*\phi\rangle|^2.
\end{align}
Since $\psi$ can be recovered from $\mathcal{W}_\phi \psi$, this task is equivalent to recovering the phase from the (complex-valued) wavelet transform $\mathcal{W}_\phi \psi$, hence the name. Historically, this problem has been mainly considered for the Weyl-Heisenberg group where we are asked to recover the phase of the short-time Fourier transform $V_\phi \psi$. Outside of the Weyl-Heisenberg case, characterizing solvability of the problem has been notoriously difficult and has been an open question for many years. In the case of the affine group where $|\mathcal{W}_\phi \psi |^2$ is called the \emph{scalogram}, there have been partial results involving sufficient conditions on the wavelet $\phi$ but there is no general condition as in the spectrogram case. This is an active area of research with many applications and recent results, see e.g. \cite{Waldspurger2017, Mallat2015, Alaifari2023, Fhr2023}.

In the Weyl-Heisenberg case, solvability is characterized by the Fourier transform of $W(\phi)$ being nonvanishing. This can be seen from the formula 
\begin{align}\label{eq:phase_ret_wigner_stft}
    |V_\phi \psi|^2 = W(\psi) * \widecheck{W(\phi)}
\end{align}
from Corollary \ref{cor:wigner_convolution_wavelet}, by taking the Fourier transform on both sides and dividing away the Fourier transform of $\widecheck{W(\phi)}$. In the general case we do not have such a nice Fourier convolution theorem to rely on but we still have the relation
\begin{align}\label{eq:phase_ret_wigner_expression}
    |\mathcal{W}_\phi \psi(x)|^2 = (\psi \otimes \psi) \star (\phi \otimes \phi)(x) = W(\psi) * \widecheck{W(\phi)}(x).
\end{align}

As we saw in Proposition \ref{prop:unique_wigner}, the Wigner distribution $W(\psi)$ determines $\psi$ up to a unimodular constant. Formally, we can recover $\psi$ up to this phase constant by quantizing $W(\psi)$ to get $\psi \otimes \psi$ and applying the resulting operator to any element $\xi$ which is not orthogonal to $\psi$ as outlined in \eqref{eq:wigner_inversion}. This is written as
\begin{align}\label{eq:wigner_to_function}
    \psi = \frac{A_{W(\psi)} \xi}{ \Vert A_{W(\psi)} \xi \Vert}
\end{align}
since $A_{W(\psi)} = \psi \otimes \psi$. Consequently, we wish to find conditions on $\phi$ so that we may manipulate \eqref{eq:phase_ret_wigner_expression} as to get an explicit expression for $W(\psi)$. This requires being able to ``disentangle'' the squared wavelet transform $|\mathcal{W}_\phi \psi|^2$, our tool for this will be the following lemma.
\begin{lemma}
    Let $f, g \in L^1_l(G)$ be such that $f \Delta \in L^1_r(G)$ and $\pi, \pi_r$ be the left and right integrated representations. Then
    \begin{align*}
        \pi(f * g) = \pi(g) \pi_r(f\Delta).
    \end{align*}
\end{lemma}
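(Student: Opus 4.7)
My plan is to evaluate both sides on an arbitrary vector $\phi \in \mathcal{H}$ and chase through the definitions. Unfolding the left integrated representation from Section \ref{sec:prelim_rep_theory} together with the convolution formula gives
\begin{align*}
\pi(f*g)\phi = \int_G (f*g)(x)\pi(x)\phi\dml(x) = \iint_{G \times G} f(y)\, g(xy^{-1})\, \pi(x)\phi \dmr(y)\dml(x).
\end{align*}
The natural substitution is $x = zy$, since then $xy^{-1} = z$ decouples the arguments of $f$ and $g$ and aligns $g$ with $\pi(z)$ on the outside. By Lemma \ref{lemma:haar_measure_properties}(i), this substitution picks up the modular function as $\dml(zy) = \Delta(y)\dml(z)$. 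Applying Fubini and the homomorphism property $\pi(zy) = \pi(z)\pi(y)$, the double integral rearranges to
\begin{align*}
\int_G g(z)\pi(z) \left[ \int_G f(y)\Delta(y)\, \pi(y)\phi \dmr(y) \right] \dml(z) = \pi(g)\pi_r(f\Delta)\phi,
\end{align*}
since the bracketed inner integral is exactly $\pi_r(f\Delta)\phi$. This step is the whole content of the lemma: the modular factor $\Delta$ is precisely the Radon–Nikodym derivative converting the left Haar measure appearing in $\pi$ into the right Haar measure that $\pi_r$ expects.

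I do not foresee serious difficulties; the main bookkeeping point is verifying the absolute convergence needed for Fubini. The hypothesis $f\Delta \in L^1_r(G)$ is equivalent to $f \in L^1_l(G)$ because $\dml = \Delta\dmr$, and since $\pi$ is unitary we have $\|\pi(x)\phi\| = \|\phi\|$ for every $x$. Consequently, after the substitution the integrand is dominated by $|f(y)|\Delta(y)\,|g(z)|\,\|\phi\|$, whose double integral equals $\|f\Delta\|_{L^1_r}\|g\|_{L^1_l}\|\phi\| < \infty$, which suffices both to justify the change of variables and to apply Fubini's theorem.
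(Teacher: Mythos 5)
Your proof is correct and follows essentially the same route as the paper's: unfold the convolution, substitute $x = zy$ so that $\dml(zy) = \Delta(y)\dml(z)$ places the modular factor on $f$, and factor the double integral into $\pi(g)\pi_r(f\Delta)$. The only differences are cosmetic (you apply everything to a vector $\phi$) plus your added Fubini justification, which the paper omits but which is a welcome bit of rigor.
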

\begin{remark}
    Recall that $f*g$ here and throughout the paper denotes \emph{right} convolution.    
\end{remark}
\begin{proof}
    Using the substitution $z = xy^{-1} \implies x = zy$, we compute
    \begin{align*}
        \pi(f * g) &= \int_G (f*g)(x) \pi(x)\dml(x) = \int_G \left( \int_G f(y) g(xy^{-1})\dmr(y) \right)\pi(x)\dml(x)\\
        &=\int_G \int_G f(y) g(z) \pi(z) \pi(y)\dmr(y)\dml(zy)\\
        &=\left( \int_G g(z) \pi(z)\dml(z) \right)\left( \int_G f(y) \Delta(y) \pi(y)\dmr(y) \right) = \pi(g) \pi_r(f \Delta).
    \end{align*}
\end{proof}
We will also need the following auxiliary lemma which is easily verified.
\begin{lemma}
    Let $f\in L^1_r(G)$ and $g \in L^1_l(G)$, then
    \begin{align*}
        \widecheck{f * \check{g}}(x) = g * \check{f}(x).
    \end{align*}
\end{lemma}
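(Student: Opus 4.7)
The proof is essentially a single change-of-variables computation exploiting right invariance of the right Haar measure. My plan is to unwind the inner convolution on the left-hand side, apply the definition of the involution twice to simplify inverses, and then translate the integration variable by $x$ to match the form of the right-hand side.

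Concretely, I would start by writing
\[
\widecheck{f * \check{g}}(x) = (f * \check{g})(x^{-1}) = \int_G f(y)\, \check{g}(x^{-1} y^{-1})\,\dmr(y),
\]
using only the definition of $\check{\cdot}$ and the definition of right convolution from the preliminaries. Because $\check{g}(x^{-1} y^{-1}) = g\bigl((x^{-1}y^{-1})^{-1}\bigr) = g(yx)$, the integral simplifies to $\int_G f(y)\, g(yx)\,\dmr(y)$, which is a very clean form.

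Next I would perform the substitution $z = yx$, i.e.\ $y = zx^{-1}$. By right invariance of the right Haar measure (the identity $\int_G \phi(zx^{-1})\,\dmr(z) = \int_G \phi(y)\,\dmr(y)$ for any $x \in G$), no Jacobian factor appears, and the integral becomes $\int_G f(zx^{-1}) g(z)\,\dmr(z)$. Rewriting $f(zx^{-1}) = f\bigl((xz^{-1})^{-1}\bigr) = \check{f}(xz^{-1})$ identifies this expression with $g * \check{f}(x)$ by the definition of right convolution, completing the proof.

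There is no real obstacle here; the only minor bookkeeping point is to make sure the integrability hypotheses are sufficient for all integrals to be absolutely convergent and the substitution to be justified. Since $\check{g} \in L^1_r(G)$ (as $g \in L^1_l(G)$ and involution exchanges $L^1_l$ and $L^1_r$) and $f \in L^1_r(G)$, the convolution $f * \check{g}$ is well-defined almost everywhere; symmetrically, $\check{f} \in L^1_l(G)$ so $g * \check{f}$ is well-defined. The chain of equalities above then holds pointwise almost everywhere, which is what the lemma asserts.
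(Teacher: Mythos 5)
Your proof is correct and is essentially identical to the paper's: both unwind the definitions of $\check{\cdot}$ and right convolution, use $\check{g}(x^{-1}y^{-1}) = g(yx)$, substitute $z = yx$ with right invariance of $\dmr$, and reidentify the result as $g * \check{f}(x)$. The added remarks on integrability are fine but not needed beyond what the hypotheses already give.
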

\begin{proof}
    With the substitution $z = yx$, we compute
    \begin{align*}
        \widecheck{f * \check{g}}(x) = f * \check{g}(x^{-1}) &= \int_G f(y) \check{g}(x^{-1} y^{-1})\dmr(y)\\
        &= \int_G f(y) g(yx)\dmr(y)\\
        &= \int_G g(z) f(zx^{-1})\dmr(zx^{-1})\\
        &= \int_G g(z) \check{f}(xz^{-1})\dmr(z) = g * \check{f}(x)
    \end{align*}
    which is what we wished to show.
\end{proof}
With these two lemmas, we will be able to wrangle \eqref{eq:phase_ret_wigner_expression} into a form where $\psi$ can conceivably be recovered.
\begin{theorem}\label{theorem:phase_retrieval}
    If $\phi \in \mathcal{H}$ is such that $W(\phi) \in L^1_l(G)$, $W(\phi) \Delta \in L^1_r(G)$ and the right integrated representation $\pi_r(W(\phi) \Delta)$ has a right inverse, then the mapping $\psi \mapsto |\mathcal{W}_\phi \psi|^2$ for $\psi$ such that $W(\psi) \in L^1_r(G)$ can be inverted up to a global phase as
    \begin{align*}
        W(\psi) = \FW \Big( \pi\big(\widecheck{|\mathcal{W}_\phi \psi|^2}\big) \pi_r\big( W(\phi) \Delta \big)^{-1} \D \Big)
    \end{align*}
    followed by
    \begin{align}\label{eq:phase_retrieval_explicit}
        \psi = \frac{\FW^{-1}\big(\FKO^{-1}\big(W(\psi)\big)\big) \xi}{ \Vert \FW^{-1}\big(\FKO^{-1}\big(W(\psi)\big)\big) \xi \Vert}
    \end{align}
    where $\xi \in \mathcal{H}$ is any vector which is not orthogonal to $\psi$.
\end{theorem}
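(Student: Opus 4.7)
The proof rests on the convolution identity
\[
|\mathcal{W}_\phi\psi|^2 = W(\psi) * \widecheck{W(\phi)}
\]
from Corollary~\ref{cor:wigner_convolution_wavelet}. The plan is to convert this scalar convolution identity into an operator identity on the left-integrated-representation side, and then solve for the unknown factor $W(\psi)$ by using the assumed right inverse. First I apply the second auxiliary lemma above (with $f=W(\psi)$ and $g=W(\phi)$) to recast the identity into the form needed by the first lemma, namely
\[
\widecheck{|\mathcal{W}_\phi\psi|^2} = W(\phi) * \widecheck{W(\psi)}.
\]
Passing to the left-integrated representation $\pi$ and applying the first auxiliary lemma with $f=W(\phi)$, $g=\widecheck{W(\psi)}$ then yields the operator identity
\[
\pi\!\left(\widecheck{|\mathcal{W}_\phi\psi|^2}\right) = \pi\!\left(\widecheck{W(\psi)}\right)\pi_r(W(\phi)\Delta),
\]
where the integrability hypotheses $W(\phi)\in L^1_l(G)$, $W(\phi)\Delta\in L^1_r(G)$, together with $W(\psi)\in L^1_r(G)$, are exactly what is needed to make both sides well-defined bounded operators.

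The right-invertibility hypothesis on $\pi_r(W(\phi)\Delta)$ now lets me isolate $\pi(\widecheck{W(\psi)})$ by multiplying on the right by $\pi_r(W(\phi)\Delta)^{-1}$. Post-composing with $\D$ on the right produces the operator
\[
\pi(\widecheck{W(\psi)})\,\D,
\]
which by Definition~\ref{def:fourier_wigner} of the inverse Fourier--Wigner transform is precisely $\FW^{-1}(W(\psi))$. Applying $\FW$ to both sides and invoking Theorem~\ref{thm:projection} --- which identifies $\FW\circ\FW^{-1}$ with the projection $\FKO^{-1}\circ\FKO$ onto $\FW(\mathcal{S}^2)=\FKO^{-1}(L^2_r(G))$ --- then recovers the first displayed formula of the theorem.

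For the second formula, the composition $\FW^{-1}\circ\FKO^{-1}$ is by definition the full quantization map, so by Theorem~\ref{theorem:quantization_main} together with $W(\psi)=a_{\psi\otimes\psi}$, one has $\FW^{-1}(\FKO^{-1}(W(\psi))) = A_{W(\psi)} = \psi\otimes\psi$. Evaluating this rank-one operator at $\xi$ produces $\langle\xi,\psi\rangle\,\psi$, which is nonzero by the assumption that $\xi$ is not orthogonal to $\psi$; normalizing by its norm eliminates everything except the unimodular factor $\langle\xi,\psi\rangle/|\langle\xi,\psi\rangle|$, which is exactly the advertised global phase ambiguity. The main obstacle I anticipate is the domain bookkeeping: one must verify that $\pi_r(W(\phi)\Delta)^{-1}$ and the unbounded Duflo--Moore factor $\D$ can be composed freely on an appropriate dense domain, and that $W(\psi)$ actually lies in $\FKO^{-1}(L^2_r(G))$ so that the projection $\FW\circ\FW^{-1}$ in the penultimate step indeed acts as the identity on $W(\psi)$. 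Both points should follow from the integrability assumptions placed on the Wigner distributions, but they are what prevent the argument from being a purely formal manipulation.
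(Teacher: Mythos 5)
Your argument follows the paper's proof essentially verbatim: the same two auxiliary lemmas convert $|\mathcal{W}_\phi\psi|^2 = W(\psi)*\widecheck{W(\phi)}$ into the operator identity $\pi\big(\widecheck{|\mathcal{W}_\phi\psi|^2}\big) = \pi\big(\widecheck{W(\psi)}\big)\pi_r(W(\phi)\Delta)$, the assumed right inverse isolates $\pi\big(\widecheck{W(\psi)}\big)$, composing with $\D$ and applying $\FW$ recovers $W(\psi)$, and \eqref{eq:wigner_to_function} then gives $\psi$ up to a unimodular constant. The only place you go beyond the paper is in explicitly flagging that $\FW\circ\FW^{-1}$ is merely a projection onto $\FW(\mathcal{S}^2)=\FKO^{-1}(L^2_r(G))$, so that one must verify $W(\psi)$ lies in this subspace for the penultimate step to return $W(\psi)$ itself --- a point the paper's own proof passes over in silence.
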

\begin{proof}
    We compute the left integrated representation of the squared modulus of the wavelet transform as
    \begin{align*}
        \pi\big(\widecheck{|\mathcal{W}_\phi \psi|^2}\big) &= \pi\big( \widecheck{W(\psi) * \widecheck{W(\phi)}} \big)\\
        &= \pi\big( W(\phi) * \widecheck{W(\psi)} \big)\\
        &= \pi\big( \widecheck{W(\psi)} \big) \pi_r\big( W(\phi) \Delta \big).
    \end{align*}
    Now if $\pi_r\big( W(\phi) \Delta \big)$ has a right inverse, we can apply it to $\pi\big(\widecheck{|\mathcal{W}_\phi \psi|^2}\big)$ to get that
    \begin{align*}
        \pi\big(\widecheck{|\mathcal{W}_\phi \psi|^2}\big) \pi_r\big( W(\phi) \Delta \big)^{-1} &= \pi\big( \widecheck{W(\psi)} \big)\\
        \implies \FW^{-1}\big(W(\psi)\big) &= \pi\big(\widecheck{|\mathcal{W}_\phi \psi|^2}\big) \pi_r\big( W(\phi) \Delta \big)^{-1} \D\\
        \implies W(\psi) &= \FW \Big( \pi\big(\widecheck{|\mathcal{W}_\phi \psi|^2}\big) \pi_r\big( W(\phi) \Delta \big)^{-1} \D \Big).
    \end{align*}
    From here the explicit expression \eqref{eq:phase_retrieval_explicit} follows from applying \eqref{eq:wigner_to_function}.
\end{proof}
\begin{remark}
    The same argument can be used to show invertibility of the mapping $\psi \mapsto Q_S(\psi)$ where $Q_S$ is the Cohen's class distribution with operator window $S$ (see \cite[Sec.~5.1]{Halvdansson2023} and \cite[Sec.~7.5]{Luef2019}). In that case, the condition becomes right invertibility of $\pi_r(a_S \Delta)$.
\end{remark}

\subsection{Wigner approximation problem}\label{sec:wigner_approximation}
The problem of approximating arbitrary $L^2(\R^{2d})$ functions by Wigner distributions was first considered for the Weyl-Heisenberg group in \cite{ben2018wigner} and later extended to the affine setting in \cite[Sec.~8.1]{Berge2022affine}. Since we have constructed a Wigner distribution in our setting, we can formulate the corresponding problem as follows: Given a group $G$ with associated Hilbert space $\mathcal{H}$, how close is a function $f \in L^2_r(G)$ to the \emph{Wigner space}
\begin{align*}
    \mathfrak{W}(G) = \big\{ W(\psi) : \psi \in \mathcal{H} \big\} \subset L^2_r(G)?
\end{align*}
As self-adjoint rank-one operators are a proper subset of the space of Hilbert-Schmidt operators, the set $\mathfrak{W}(G)$ is also a proper subset of $L^2_r(G)$. Moreover, using the orthogonality relations for Wigner distributions, Proposition \ref{prop:wigner_orthog}, we can see that this space is closed. 

In \cite{Berge2022affine}, the role of quantization in solving the problem is emphasized and their approach carries over with minor modifications to our setting since we have similar quantization properties. The main idea can be summarized as ``\textit{via quantization, being close to being a Wigner distribution corresponds to almost being a rank-one operator}''.
\begin{theorem}
    Fix a real-valued $f \in L^2_r(G)$ with quantization $A_f$ and let $\lambda_{\max}^+(A_f)$ be the positive part of $\max_{\lambda \in \operatorname{Spec}(A_f)} \lambda$, then
    \begin{align*}
        \inf_{g \in \mathfrak{W}(G)} \Vert f-g \Vert_{L^2_r} = \sqrt{\Vert f \Vert_{L^2_r}^2 - \lambda_{\max}^+(A_f)^2}.
    \end{align*}
    The infimum is always attained and the number of unique minimizers (up to multiplication by a unimodular constant) is equal to the multiplicity of $\lambda_{\max}^+(A_f)$.
\end{theorem}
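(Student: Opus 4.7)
The plan is to translate the approximation problem from $L^2_r(G)$ to the operator space $\mathcal{S}^2(\mathcal{H})$ via the dequantization isomorphism. Because $a : L^2_r(G) \to \mathcal{S}^2$ is a unitary isometry by Theorem \ref{theorem:quantization_main}, and by construction $a_{\psi \otimes \psi} = W(\psi)$, the distance $\Vert f - W(\psi) \Vert_{L^2_r}$ equals $\Vert A_f - \psi \otimes \psi \Vert_{\mathcal{S}^2}$. So the problem reduces to computing $\inf_{\psi \in \mathcal{H}} \Vert A_f - \psi \otimes \psi \Vert_{\mathcal{S}^2}$, i.e.\ the best rank-one self-adjoint positive approximation of $A_f$ in Hilbert–Schmidt norm. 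The realness of $f$ is crucial: by Proposition \ref{prop:quantization_adjoint}, $A_f^* = A_{\bar f} = A_f$, so $A_f$ is a self-adjoint compact operator with a genuine spectral decomposition with real eigenvalues accumulating only at zero.

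The core computation then expands the norm as
\[
\Vert A_f - \psi \otimes \psi \Vert_{\mathcal{S}^2}^2 = \Vert f \Vert_{L^2_r}^2 - 2\langle A_f \psi, \psi \rangle_{\mathcal{H}} + \Vert \psi \Vert_{\mathcal{H}}^4,
\]
using that $\langle A_f, \psi \otimes \psi \rangle_{\mathcal{S}^2} = \langle A_f \psi, \psi \rangle$, which is real by self-adjointness. Writing $\psi = \sqrt{t}\, \hat\psi$ with $\Vert \hat\psi \Vert = 1$ and $t \geq 0$, the expression becomes $\Vert f \Vert_{L^2_r}^2 - 2t \langle A_f \hat\psi, \hat\psi \rangle + t^2$. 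For fixed $\hat\psi$ the minimum over $t \geq 0$ is $\Vert f \Vert_{L^2_r}^2$ when $\langle A_f \hat\psi, \hat\psi \rangle \leq 0$ and is $\Vert f \Vert_{L^2_r}^2 - \langle A_f \hat\psi, \hat\psi \rangle^2$ (attained at $t = \langle A_f \hat\psi, \hat\psi \rangle$) otherwise. Taking the infimum over unit $\hat\psi$ and invoking the Rayleigh–Ritz identity $\sup_{\Vert \hat\psi \Vert = 1} \langle A_f \hat\psi, \hat\psi \rangle = \lambda_{\max}(A_f)$, which is attained because $A_f$ is compact and self-adjoint, yields the claimed value $\Vert f \Vert_{L^2_r}^2 - (\lambda_{\max}^+(A_f))^2$.

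For attainment and uniqueness: if $\lambda_{\max}^+(A_f) > 0$, the minimizers are exactly those $\psi$ of the form $\sqrt{\lambda_{\max}^+(A_f)}\, \hat\psi$ with $\hat\psi$ a unit eigenvector of $A_f$ for the eigenvalue $\lambda_{\max}^+(A_f)$. By Proposition \ref{prop:unique_wigner}, two such vectors yield the same Wigner distribution if and only if they differ by a unimodular constant, so distinct minimizing $g \in \mathfrak{W}(G)$ correspond to unit vectors in the $\lambda_{\max}^+(A_f)$-eigenspace modulo unimodular scaling—a family parametrized by the projectivization of this eigenspace, whose dimension is the multiplicity of the eigenvalue. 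The degenerate case $\lambda_{\max}^+(A_f) = 0$ forces $t = 0$, giving the unique minimizer $g = W(0) = 0$. The main obstacle is mostly bookkeeping—ensuring the spectral maximum is genuinely attained (automatic from compactness when $\lambda_{\max}^+ > 0$) and interpreting the multiplicity claim correctly as the dimension of the eigenspace rather than a literal count.
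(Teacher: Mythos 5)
Your proposal is correct and follows essentially the same route as the paper: both reduce the problem via the unitary isometry of quantization to the best rank-one approximation of the compact self-adjoint operator $A_f$ in $\mathcal{S}^2$ and then invoke its spectral decomposition. Your version is somewhat more explicit in the minimization step (expanding the Hilbert--Schmidt norm and optimizing the Rayleigh quotient, and carefully handling the $\lambda_{\max}^+ = 0$ case and the interpretation of the multiplicity claim), whereas the paper asserts the optimal $\psi = \sqrt{\lambda_j}\,\phi_j$ directly from orthogonality of the eigenbasis, but the underlying argument is the same.
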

In the interest of completeness, we write out a proof of the above theorem but remark that we follow the same strategy as in \cite{Berge2022affine}.
\begin{proof}
    By quantization being a unitary isometry, we have that
    \begin{align}\label{eq:quantization_wigner_approx_prob_reform}
        \inf_{g \in \mathfrak{W}(G)} \Vert f-g \Vert_{L^2_r} = \inf_{\psi \in \mathcal{H}} \Vert A_f - \psi \otimes \psi \Vert_{\mathcal{S}^2}.
    \end{align}
    We wish to apply the spectral theorem to $A_f$ and so first note that it is compact by virtue of being a Hilbert-Schmidt operator and self-adjoint by real-valuedness via Proposition \ref{prop:quantization_adjoint}. Hence its eigendecomposition can be written as
    \begin{align*}
        A_f = \sum_k \lambda_k (\phi_k \otimes \phi_k)
    \end{align*}
    with convergence in the Hilbert-Schmidt norm. We can now rewrite \eqref{eq:quantization_wigner_approx_prob_reform} as
    \begin{align*}
        \inf_{g \in \mathfrak{W}(G)} \Vert f-g \Vert_{L^2_r} = \inf_{\psi \in \mathcal{H}} \left\Vert \sum_k \lambda_k (\phi_k \otimes \phi_k) - \psi \otimes \psi \right\Vert_{\mathcal{S}^2}.
    \end{align*}
    By the orthogonality of $(\phi_k)_k$, this quantity is minimized when $\psi = \sqrt{\lambda_j} \phi_j$ where $\lambda_j$ and $\phi_j$ are the eigendata corresponding to the largest positive eigenvalue. The reformulation as $\sqrt{\Vert f \Vert_{L^2_r}^2 - \lambda_{\max}^+(A_f)^2}$ also follows from the orthogonality of $(\phi_k)_k$. Lastly the statement about non-unique minimizers follows from the ambiguity associated with choosing $\lambda_j$ and $\phi_j$.
\end{proof}

\subsection{Wavelet/Wigner spaces have trivial intersection}\label{sec:wavelet_intersection}
The intersections of wavelet spaces, meaning the images of $\mathcal{H}$ under the mapping $\mathcal{W}_\phi : \psi \mapsto \langle \psi, \pi(\cdot) \phi \rangle$, have been studied in \cite{cont_wavelet_transform, berge2020interpolation} where it was shown that the intersection $\mathcal{W}_{\phi_1}(\mathcal{H}) \cap \mathcal{W}_{\phi_2}(\mathcal{H})$ is trivial unless $\phi_1 = c\phi_2$ for some complex number $c$. Recently, Skrettingland and Luef provided a simplified proof for Gabor spaces in \cite[Lem.~3.3]{luef2021jfa} which depended on a Weyl-Heisenberg version of our Proposition \ref{prop:fourier_wigner_rank_one}. Below we show that Proposition \ref{prop:fourier_wigner_rank_one} can simplify the proof of \cite[Thm.~4.2]{cont_wavelet_transform} in our setting of exponential groups.
\begin{proposition}
    Let $\phi_1, \phi_2 \in \operatorname{Dom}(\D^{-1})$. If there exists $c \in \mathbb{C}$ such that $\phi_1 = c\phi_2$, then $\mathcal{W}_{\phi_1}(\mathcal{H}) = \mathcal{W}_{\phi_2}(\mathcal{H})$. Otherwise, $\mathcal{W}_{\phi_1}(\mathcal{H}) \cap \mathcal{W}_{\phi_2}(\mathcal{H}) = \{ 0 \}$.
\end{proposition}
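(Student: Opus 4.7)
The plan is to reduce the wavelet transform to the Fourier-Wigner transform of a rank-one operator via Proposition \ref{prop:fourier_wigner_rank_one}, and then exploit the injectivity of $\FW$ (Proposition \ref{prop:FW_isometry}) to translate an equality of wavelet transforms into an equality of rank-one operators, from which linear dependence of the windows follows.

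First I would dispatch the easy direction: if $\phi_1 = c\phi_2$, then by the sesquilinearity of the wavelet transform in the window, $\mathcal{W}_{\phi_1}\psi = \bar c\, \mathcal{W}_{\phi_2}\psi$ for every $\psi \in \mathcal{H}$, so $\mathcal{W}_{\phi_1}(\mathcal{H}) \subseteq \mathcal{W}_{\phi_2}(\mathcal{H})$; when $c \neq 0$ the reverse inclusion is obtained the same way, and when $c = 0$ we have $\mathcal{W}_{\phi_1}(\mathcal{H}) = \{0\}$, consistent with the statement in the trivial case $\phi_1 = 0$.

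For the main direction, assume $\phi_1$ and $\phi_2$ are linearly independent and pick $f \in \mathcal{W}_{\phi_1}(\mathcal{H}) \cap \mathcal{W}_{\phi_2}(\mathcal{H})$. Write $f = \mathcal{W}_{\phi_1}\psi_1 = \mathcal{W}_{\phi_2}\psi_2$, then apply Proposition \ref{prop:fourier_wigner_rank_one} to rewrite both sides as
\[
\FW(\psi_1 \otimes \D^{-1}\phi_1) = \FW(\psi_2 \otimes \D^{-1}\phi_2).
\]
The injectivity of $\FW$ on $\mathcal{S}^2$ (Proposition \ref{prop:FW_isometry}) forces the two rank-one operators to coincide. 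A short pointwise computation on the defining relation $(\psi \otimes \xi)\eta = \langle \eta, \xi\rangle \psi$ then shows that if $\psi_1 \otimes \D^{-1}\phi_1$ is nonzero, there is a scalar $c$ with $\D^{-1}\phi_2 = \bar c\, \D^{-1}\phi_1$. Since $\D^{-1}$ is positive (hence injective on its domain, as $\langle \D^{-1}\phi, \phi\rangle = 0$ forces $\phi = 0$ for $\phi \in \operatorname{Dom}(\D^{-1})$), this gives $\phi_2 = \bar c\, \phi_1$, contradicting linear independence. Thus the operator must be zero, which forces $f = \mathcal{W}_{\phi_1}\psi_1 = 0$.

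The step I expect to require the most care is the deduction of linear dependence from the rank-one identity, specifically justifying that $\D^{-1}$ is injective on its domain so that dependence of $\D^{-1}\phi_1$ and $\D^{-1}\phi_2$ transfers to dependence of $\phi_1$ and $\phi_2$. Everything else is routine once Proposition \ref{prop:fourier_wigner_rank_one} and the injectivity of $\FW$ are in hand, which is the whole point of the simplification: the nontrivial harmonic-analytic content that was present in \cite{cont_wavelet_transform} is absorbed into these two structural results.
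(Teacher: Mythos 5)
Your proof is correct and follows essentially the same route as the paper: reduce $\mathcal{W}_{\phi_i}\psi_i$ to $\FW(\psi_i \otimes \D^{-1}\phi_i)$ via Proposition \ref{prop:fourier_wigner_rank_one}, invoke the injectivity of $\FW$ from Proposition \ref{prop:FW_isometry} to equate the rank-one operators, and extract collinearity of the windows (the paper phrases this contrapositively rather than by contradiction, but the content is identical). One small remark: positivity alone does not imply injectivity of an operator, but here the injectivity of $\D^{-1}$ is immediate because its inverse $\D$ exists by construction, so your conclusion stands.
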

\begin{proof}
    The forward implication follows from the relation $\mathcal{W}_{c\phi_2}(\xi) = \mathcal{W}_{\phi_2}(\Bar{c}\xi)$. For the other direction, we have the following chain of implications
    \begin{align*}
        \mathcal{W}_{\phi_1}(\psi_1) &= \mathcal{W}_{\phi_2}(\psi_2)\\
        \implies \langle \psi_1, \pi(\cdot)^* \phi_1 \rangle &= \langle \psi_2, \pi(\cdot)^* \phi_2 \rangle && \text{(Definition of } \mathcal{W}_\phi(\psi) \text{)}\\
        \implies \FW(\psi_1 \otimes \D^{-1}\phi_1) &= \FW(\psi_2 \otimes \D^{-1} \phi_2) && \text{(Applying } \FW \text{ to both sides)}\\
        \implies \psi_1 \otimes \D^{-1}\phi_1 &= \psi_2 \otimes \D^{-1} \phi_2 && \text{(Injectivity of } \FW \text{)}\\
        \implies \D^{-1} \phi_1 \otimes \psi_1 &= \D^{-1} \phi_2 \otimes \psi_2 && \text{(Taking adjoints)}\\
        \implies \Vert \psi_1 \Vert^2 \D^{-1} \phi_1 &= \langle \psi_1, \psi_2 \rangle \D^{-1} \phi_2 && \text{(Applying to $\psi_1$)}\\
        \implies \phi_1 &= \frac{\langle \psi_1, \psi_2 \rangle}{\Vert \psi_1 \Vert^2} \phi_2 && \text{(Solving for } \phi_1 \text{)}
    \end{align*}
    which finishes the proof.
\end{proof}

Cross-Wigner distributions $W(\psi, \phi)$ are sometimes used as time-frequency distributions with $\phi$ being viewed as the window. For these purposes, define the cross-Wigner space $W(\mathcal{H}, \phi)$ as 
\begin{align*}
    W(\mathcal{H}, \phi) = \big\{ W(\psi, \phi) : \psi \in \mathcal{H} \big\} \subset L^2_r(G).
\end{align*}
In this case we have the same type of result as for wavelet spaces with an even shorter proof.
\begin{proposition}
    Let $\phi_1, \phi_2 \in \mathcal{H}$. If there exists $c \in \mathbb{C}$ such that $\phi_1 = c\phi_2$, then $W(\mathcal{H}, \phi_1) = W(\mathcal{H}, \phi_2)$. Otherwise $W(\mathcal{H}, \phi_1) \cap  W(\mathcal{H}, \phi_2) = \{ 0 \}$.
\end{proposition}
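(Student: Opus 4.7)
The plan is to mimic the wavelet-space argument and use the fact that the dequantization map $a : \mathcal S^2 \to L^2_r(G)$ is injective (indeed a unitary isometry by Theorem \ref{theorem:quantization_main}) to reduce the problem to a statement about rank-one operators.

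For the forward implication, assume $\phi_1 = c\phi_2$. If $c = 0$ both spaces are trivial, so assume $c \neq 0$. By the anti-linearity of $W$ in its second argument (Proposition \ref{prop:wigner_sesquilinear}), together with linearity in the first,
\begin{equation*}
W(\psi, \phi_1) = W(\psi, c\phi_2) = \bar c\, W(\psi, \phi_2) = W(\bar c \psi, \phi_2),
\end{equation*}
so $W(\mathcal H, \phi_1) \subseteq W(\mathcal H, \phi_2)$, and the symmetric argument gives the reverse inclusion.

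For the reverse implication, suppose $f \in W(\mathcal H, \phi_1) \cap W(\mathcal H, \phi_2)$, so $f = W(\psi_1, \phi_1) = W(\psi_2, \phi_2)$ for some $\psi_1, \psi_2 \in \mathcal H$. By definition $W(\psi, \phi) = a_{\psi\otimes \phi}$, and since $a$ is injective this forces
\begin{equation*}
\psi_1 \otimes \phi_1 = \psi_2 \otimes \phi_2
\end{equation*}
as operators in $\mathcal S^2(\mathcal H)$. Assume toward a contradiction that $f \neq 0$. Then neither side above vanishes, so $\psi_i \neq 0$ and $\phi_i \neq 0$ for $i = 1, 2$. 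Taking adjoints of both sides gives $\phi_1 \otimes \psi_1 = \phi_2 \otimes \psi_2$; applying this identity to $\psi_1$ yields
\begin{equation*}
\|\psi_1\|^2 \, \phi_1 = \langle \psi_1, \psi_2 \rangle \, \phi_2,
\end{equation*}
so $\phi_1$ is a scalar multiple of $\phi_2$, contradicting the hypothesis. Hence $f = 0$.

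The main obstacle here is essentially nonexistent: all the heavy lifting (namely, that $a$ is a unitary isometry and that $W(\psi, \phi) = a_{\psi \otimes \phi}$) has already been done in Section \ref{sec:wigner}, and the remaining argument reduces to an elementary manipulation of rank-one operators, which is strictly simpler than the corresponding wavelet-space proof since one does not need to invoke $\D^{-1}$ at all.
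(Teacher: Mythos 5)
Your proof is correct, but it takes a genuinely different route from the paper's. The paper proves the nontrivial direction by feeding the assumed equality $W(\psi_1,\phi_1)=W(\psi_2,\phi_2)$ into the orthogonality relation of Proposition \ref{prop:wigner_orthog}, observing that $\big|\langle\psi_1,\psi_2\rangle\big|\big|\langle\phi_1,\phi_2\rangle\big|$ must then saturate the Cauchy--Schwarz bound $\Vert\psi_1\Vert\Vert\psi_2\Vert\Vert\phi_1\Vert\Vert\phi_2\Vert$, which forces proportionality of $\phi_1$ and $\phi_2$ (and of $\psi_1$ and $\psi_2$). You instead pull the equality back through the dequantization map: since $W(\psi,\phi)=a_{\psi\otimes\phi}$ and $a$ is injective by Theorem \ref{theorem:quantization_main}, you get $\psi_1\otimes\phi_1=\psi_2\otimes\phi_2$ and finish by elementary rank-one algebra, exactly mirroring the paper's proof of the preceding wavelet-space proposition. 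Both arguments are sound and rest on machinery the paper has already established; the paper's version is slightly shorter given that Proposition \ref{prop:wigner_orthog} is in hand, while yours makes the mechanism more transparent and unifies the two intersection results under one rank-one computation. Two small remarks: your parenthetical ``if $c=0$ both spaces are trivial'' is not quite right --- if $c=0$ then $W(\mathcal{H},\phi_1)=\{0\}$ while $W(\mathcal{H},\phi_2)$ need not be trivial, so the claimed equality of the two spaces fails in that degenerate case (a defect of the statement itself, which both you and the paper implicitly sidestep by assuming $c\neq 0$); and you could shave a line off your argument by applying $\psi_1\otimes\phi_1=\psi_2\otimes\phi_2$ directly to a suitable vector rather than first passing to adjoints, though taking adjoints is what isolates $\phi_1$ rather than $\psi_1$, so your choice is the natural one.
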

\begin{proof}
    The forward implication again follows from $W(\xi, c\phi_2) = W(\bar{c}\xi, \phi_2)$ by the sesquilinearity of the cross-Wigner distribution from Proposition \ref{prop:wigner_sesquilinear}.

    Suppose there exist $\psi_1, \psi_2 \in \mathcal{H}$ such that $W(\psi_1, \phi_1) = W(\psi_2, \phi_2)$. We will show that this implies that $\phi_1 = c \phi_2$. Indeed, by the equality
    \begin{align*}
        \big\langle W(\psi_1, \phi_1), W(\psi_2, \phi_2)\big\rangle = \langle \psi_1, \psi_2 \rangle \overline{\langle \phi_1, \phi_2 \rangle}
    \end{align*}
    from Proposition \ref{prop:wigner_orthog}, and by Cauchy-Schwarz, we must have proportionality in each of the inner products on the right hand side because we have it on the left hand side.
\end{proof}

\subsection{$L^p$ bounds on operator convolutions}\label{sec:Lp_op_conv_bounds}
Using the link between quantization and operator convolutions from quantum harmonic analysis established in Section \ref{sec:operator_convolution_quantization}, we can obtain new results on integrability of operator convolutions. Before stating our result, we briefly survey what bounds have previously been available.

The original article on quantum harmonic analysis by Werner \cite{werner1984} included a version of Young's inequality stated purely for operators, see \cite[Prop.~3.2 (5)]{werner1984}, of the form
\begin{align}\label{eq:werner_young}
    \Vert T \star S \Vert_{L^r(\R^{2d})} \leq \Vert T \Vert_{\mathcal{S}^p} \Vert S \Vert_{\mathcal{S}^q}
\end{align}
where $\frac{1}{p} + \frac{1}{q} = 1 + \frac{1}{r}$. It should be stressed that this was only shown for the Weyl-Heisenberg group.

In the generalization of quantum harmonic analysis to the affine group and locally compact groups \cite{Berge2022, Halvdansson2023}, the $p = q = r = 1$ base case of \eqref{eq:werner_young} needed to take into consideration the notion of admissibility, yielding $\Vert T \star S \Vert_{L^1_r(G)} \leq \Vert T \Vert_{\mathcal{S}^1} \Vert \D^{-1} S \D^{-1} \Vert_{\mathcal{S}^1}$. Interpolating this leads to the inequality
\begin{align*}
    \Vert T \star S \Vert_{L^p_r(G)} \leq \Vert T \Vert_{\mathcal{S}^p} \Vert S \Vert_{\mathcal{S}^1}^{\frac{1}{q}} \big\Vert \D^{-1} S \D^{-1} \big\Vert_{\mathcal{S}^1}^{1/p}
\end{align*}
for $\frac{1}{p} + \frac{1}{q} = 1$, see \cite[Prop.~4.13 (ii)]{Halvdansson2023} or \cite[Prop.~4.18 (2)]{Berge2022} for the proof.

Interpolating the mapping $K_S : T \mapsto T \star S$ between the above and the standard Schatten norm inequality $\Vert T \star S \Vert_{L^\infty} \leq \Vert T \Vert_{\mathcal{S}^q} \Vert S \Vert_{\mathcal{S}^p}$ yields
\begin{align*}
    \Vert T \star S \Vert_{L^{q_\theta}_r(G)}\leq \Big( \Vert S \Vert_{\mathcal{S}^1}^{1/q} \big\Vert \D^{-1} S \D^{-1} \big\Vert_{\mathcal{S}^1}^{1/p} \Big)^{1-\theta} \Vert S \Vert_{\mathcal{S}^p}^\theta \Vert T \Vert_{\mathcal{S}^{p_\theta}}
\end{align*}
where $\frac{1}{p} + \frac{1}{q} = 1$, $\theta \in (0,1)$, $\frac{1}{p_\theta} = \frac{\theta
}{q} + \frac{1-\theta}{p}$ and $\frac{1}{q_\theta} = \frac{1-\theta}{p}$. However, we do not get the standard $1+\frac{1}{r} = \frac{1}{p} + \frac{1}{q}$ structure from Young's inequality due to how we must treat the admissibility constant.

These approaches have all been operator-based and we now turn our attention to the construction of a quantization-based bound which essentially follows from Young's inequality for locally compact groups.
\begin{proposition}
    Let $T, S \in \mathcal{S}^2$ and $\frac{1}{p} + \frac{1}{q} = 1 + \frac{1}{r},\,\frac{1}{q} + \frac{1}{q'} = 1$ with $1 \leq p, q, q', r \leq \infty$, then
    \begin{align*}
        \Vert T \star S \Vert_{L^r_r(G)} \leq \big\Vert a_T \Delta^{1/q'}\big\Vert_{L_r^p(G)} \big\Vert a_S \big\Vert_{L_l^q(G)}.
    \end{align*}
\end{proposition}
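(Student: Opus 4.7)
The strategy is to convert the operator convolution into a group convolution via the quantization picture and then appeal to a weighted Young inequality for (possibly non-unimodular) locally compact groups.

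First, by Proposition \ref{prop:op_op_quantization} we have the identification $T \star S = a_T * \widecheck{a_S}$, so the task reduces to bounding $\Vert a_T * \widecheck{a_S}\Vert_{L^r_r(G)}$. The plan is to establish the weighted Young inequality
\begin{align*}
    \Vert f * g\Vert_{L^r_r(G)} \leq \big\Vert f\,\Delta^{1/q'}\big\Vert_{L^p_r(G)} \Vert g\Vert_{L^q_r(G)}
\end{align*}
for the right convolution $f*g(x) = \int_G f(y)g(xy^{-1})\dmr(y)$, valid whenever $\frac{1}{p}+\frac{1}{q} = 1+\frac{1}{r}$. Applying this with $f = a_T$ and $g = \widecheck{a_S}$ and invoking the identity $\Vert \widecheck{a_S}\Vert_{L^q_r(G)} = \Vert a_S\Vert_{L^q_l(G)}$ (an immediate consequence of $\dmr(x^{-1}) = \dml(x)$ from Lemma \ref{lemma:haar_measure_properties}) then yields the proposition.

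To prove the weighted Young inequality, the plan is to use the classical three-exponent Hölder argument with exponents $(r,q',p')$ satisfying $\frac{1}{r}+\frac{1}{q'}+\frac{1}{p'} = 1$. Pointwise in $x$, the integrand $|f(y)||g(xy^{-1})|$ is written as a product $A(y,x)B(y,x)C(y,x)$ in which $A^r = |f(y)|^p|g(xy^{-1})|^q \Delta(y)^{e_A}$, $B^{q'} = |f(y)|^p \Delta(y)^{p/q'}$, and $C^{p'} = |g(xy^{-1})|^q \Delta(y)^{e_C}$, with the exponents of $\Delta(y)$ chosen so that $\frac{e_A}{r}+\frac{p/q'}{q'}+\frac{e_C}{p'} = 0$ and no residual weight survives in the product. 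The integral of $B^{q'}$ then gives exactly $\Vert f\Delta^{1/q'}\Vert_{L^p_r}^p$, independently of $x$, and the integral of $C^{p'}$ gives the norm of $g$ together with a factor of $\Delta(x)^{e_C/p'}$ that is absorbed in the final integration over $x$.

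The main technical obstacle is the bookkeeping of the modular function: because $\dmr$ is not left-invariant, the substitution $z=xy^{-1}$ used to evaluate inner integrals produces Jacobian factors of $\Delta$, and the weights in the Hölder decomposition must be tuned so that, after applying Fubini and a final change of variables in $x$, all stray $\Delta$-factors recombine into exactly the weight $\Delta^{1/q'}$ on $f$ with no residual weight on $g$. The constraint $\frac{1}{p}+\frac{1}{q} = 1+\frac{1}{r}$ is precisely what makes the accounting of these $\Delta$-exponents close up consistently; the case of unimodular $G$ is the special case in which all the $\Delta$ factors are trivial and the inequality collapses to the standard Young inequality on $L^r$.
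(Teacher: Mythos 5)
Your reduction is exactly the paper's: Proposition \ref{prop:op_op_quantization} converts $T \star S$ into the group convolution $a_T * \widecheck{a_S}$, and the identity $\Vert \widecheck{a_S}\Vert_{L^q_r(G)} = \Vert a_S\Vert_{L^q_l(G)}$ handles the involution at the end. Where you diverge is that the paper simply \emph{cites} the weighted Young inequality for locally compact groups (Klein--Russo, Lem.~2.1) and stops there, whereas you propose to prove it from scratch by the three-exponent H\"older argument. That is a legitimate route in principle --- it is how such inequalities are established --- but as written it is a plan, not a proof: the exponents $e_A$ and $e_C$ are never pinned down, and the claim that ``the constraint $\frac{1}{p}+\frac{1}{q}=1+\frac{1}{r}$ is precisely what makes the accounting close up'' is asserted rather than checked. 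This matters because the accounting is exactly where the difficulty sits. For instance, if one chooses $e_C$ so that $\int_G C^{p'}\dmr(y)$ produces an unweighted norm of $g$ (which requires $e_C=1$ after the substitution $z=xy^{-1}$, since $\dmr(z^{-1}x)=\Delta(z)\dmr(z)$), then the residual $\Delta(x)^{1/p'}$ does not simply ``get absorbed'': after Fubini the $x$-integral of $|g(xy^{-1})|^q\Delta(x)^{r/p'}$ returns a $\Delta$-weighted norm of $g$, not $\Vert g\Vert_{L^q_r}^q$; and the alternative choice $e_C=0$ forces $e_A=-rp/(q')^2$, which is incompatible with $\int_G A^r\dmr(y)$ reproducing $\Vert f\Delta^{1/q'}\Vert_{L^p_r}^p$. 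So the specific ansatz you describe does not obviously close, and some redistribution of the $\Delta$-weights (or a mixed left/right-norm intermediate statement) is needed. Either carry out the bookkeeping in full and verify that a consistent choice of weights exists for these conventions (right convolution, right Haar measure), or do what the paper does and cite the inequality; in the latter case the rest of your argument is complete and correct.
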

\begin{proof}
    The key tool we will use is Young's inequality for locally compact groups \cite[Lem.~2.1]{Klein1978} which can be stated as
    \begin{align}\label{eq:lc_young}
        \Vert f  *_G g \Vert_{L_r^r(G)} \leq \big\Vert f \Delta^{1/q'} \big\Vert_{L_r^p(G)} \Vert g \Vert_{L_r^q(G)}
    \end{align}
    with $p,q,q',r$ as in the proposition. Proposition \ref{prop:op_op_quantization} now immediately yields that
    \begin{align*}
        \Vert T \star S \Vert_{L_r^r(G)} = \Vert a_T * \check{a_S} \Vert_{L_r^r(G)}
    \end{align*}
    and so our quantity is in the form of \eqref{eq:lc_young}. We set $f = a_T,\, g = \check{a_S}$ and apply the inequality to obtain
    \begin{align*}
        \Vert T \star S \Vert_{L_r^r(G)} &\leq \big\Vert a_T \Delta^{1/q'} \big\Vert_{L_r^p(G)} \Vert \check{a_S}\Vert_{L_r^q(G)} = \big\Vert a_T \Delta^{1/q'} \big\Vert_{L_r^p(G)} \Vert a_S \Vert_{L_l^q(G)}
    \end{align*}
    which is what we wished to show.
\end{proof}

\bibliographystyle{abbrv}
\bibliography{ref}
\Addresses

\end{document}